\newcommand*{\R}{{\mathbb R}}
\newcommand*{\N}{{\mathbb N}}
\newcommand*{\eps}{\varepsilon}
\newcommand*{\pip}{\varphi}
\providecommand*{\vint}[1]{\mathchoice
          {\mathop{\vrule width 5pt height 3 pt depth -2.5pt
                  \kern -9pt \kern 1pt\intop}\nolimits_{\kern -5pt{#1}}}
          {\mathop{\vrule width 5pt height 3 pt depth -2.6pt
                  \kern -6pt \intop}\nolimits_{\kern -3pt{#1}}}
          {\mathop{\vrule width 5pt height 3 pt depth -2.6pt
                  \kern -6pt \intop}\nolimits_{\kern -3pt{#1}}}
          {\mathop{\vrule width 5pt height 3 pt depth -2.6pt
                  \kern -6pt \intop}\nolimits_{\kern -3pt{#1}}}}
\DeclareMathOperator{\diam}{diam}
\DeclareMathOperator{\Mod}{Mod}
\numberwithin{equation}{section}
\theoremstyle{plain}
\newtheorem{thm}[equation]{Theorem}
\newtheorem{prop}[equation]{Proposition}
\newtheorem{cor}[equation]{Corollary}
\newtheorem{lem}[equation]{Lemma}
\newtheorem{sublem}[equation]{Sublemma}
\theoremstyle{definition}
\newtheorem{defn}[equation]{Definition}
\newtheorem{remark}[equation]{Remark}
\begin{document}

\title[On Carrasco Piaggio's theorem -- combinatorial modulus]
{On Carrasco Piaggio's theorem connecting combinatorial modulus and Ahlfors regular conformal dimension} 
\author{Behnam Esmayli}
\address{Department of Mathematical Sciences, P.O.~Box 210025, University of Cincinnati, Cincinnati, OH~45221-0025, U.S.A.}
\email{esmaylbm@ucmail.uc.edu}
\author{Ryan Schardine}
\address{Department of Mathematical Sciences, P.O.~Box 210025, University of Cincinnati, Cincinnati, OH~45221-0025, U.S.A.}
\email{rsch613@gmail.com}
\author{Nageswari Shanmugalingam}
\address{Department of Mathematical Sciences, P.O.~Box 210025, University of Cincinnati, Cincinnati, OH~45221-0025, U.S.A.}
\email{shanmun@uc.edu}
\thanks{
 N.S.~and R.S.~are partially supported by the NSF (U.S.A.) grant DMS~\#2054960. The authors would like to thank Mario Bonk  
 for the reading seminar on the paper of Carrasco Piaggio
 run at MSRI (Berkeley) in Spring 2022 during N.S.'s research stay at MSRI as Chern visiting professor.
 She would also like to thank that august institution for its kind hospitality during her stay.}
\maketitle

\begin{abstract}
The goal of this paper is to provide an expository description of a result of Carrasco Piaggio~\cite{Car}
connecting the Ahlfors regular conformal dimension of a compact uniformly perfect doubling metric space
with the combinatorial $p$-moduli of the metric space. We give detailed construction of a metric associated
with the $p$-modulus of the space when the $p$-modulus is zero, so that the constructed metric is in the
Ahlfors regular conformal gauge of the metric space. To do so, we utilize the tools of hyperbolic filling,
developed first in~\cite{Gro, BP}.
\end{abstract}

\noindent
    {\small \emph{Key words and phrases}: doubling metric space, Gromov hyperbolic filling, quasisymmetry, Ahlfors regular, uniformly perfect,
    	conformal change in metric, combinatorial modulus.
}

\medskip

\noindent
    {\small Mathematics Subject Classification (2020):
Primary: 30L10.
Secondary: 30L05, 51F30, 53C23.
}

\section{Introduction}

Information about the dimensions of a metric space gives us insight into the geometric structure of the space. Given
a metric space, we can consider its topological dimension, Hausdorff dimension, Minkowski dimensions, box-counting
dimensions, or, if the metric space is Ahlfors regular, its Ahlfors regularity dimension.
Here, a metric space is Ahlfors regular if there is a positive real number $Q$ such that the metric space, equipped 
with the $Q$-dimensional Hausdorff measure, sees the measure 
of balls of radius $r>0$ as being comparable to $r^Q$ provided that $r$ is not larger than the diameter of the metric space. 
Note that if the metric space is a Riemannian manifold, then all the
dimensions mentioned above coincide; however, even for subsets of Euclidean spaces, these dimensions might
differ (as, for example, in the case of fractals such as the Sierpinski carpet).
These dimensions, apart from the topological dimension, are not invariant under homeomorphisms.
For example, the Ahlfors regularity dimension of the Euclidean space $\R^n$, equipped with the Euclidean metric $d_{\text{Euc}}$,
is $n$, but when $\R^n$ is equipped with the snowflaked metric, i.e.~the metric given by 
$d(x,y)=d_{\text{Euc}}(x,y)^\eps$ for a fixed $\eps\in(0,1)$, it has Ahlfors regularity dimension $n/\eps$, even
though both metrics generate the same topology on $\R^n$. Note that the identity map of $\R^n$, equipped with the two
metrics, is quasisymmetric, and the Ahlfors regularity dimension of the snowflaked space is larger than that of the Euclidean
space. In the context of Euclidean spaces, each Ahlfors regular metric space that is quasisymmetric to $\R^n$ cannot
have Ahlfors regularity dimension smaller than $n$.

In this note we are interested in a notion called \emph{Ahlfors regular conformal dimension}. This dimension
is the infimum of the
Ahlfors regularity dimensions of all the Ahlfors regular metric spaces that are quasisymmetric to the given metric space.
The Euclidean space $\R^n$ has Ahlfors regular conformal dimension $n$, while Cantor sets, that is, sets that are
totally disconnected and uniformly perfect, have Ahlfors regular conformal dimension $0$. Indeed, it follows
from~\cite[Proposition~15.7]{DS} that if the Ahlfors regular conformal dimension of a metric space is smaller than
$1$, then it must be $0$. We refer the interested reader to~\cite{BK, Kwa, Mur} for a sampling of literature that uses the
notion of Ahlfors regular conformal dimension to explore geometry in various situations.

If the metric space in question is not uniformly perfect, then there are no Ahlfors regular
metric spaces that would be quasisymmetric to it, and in this case the Ahlfors regular conformal dimension is infinite.
Thus, in this note, we focus on compact doubling metric spaces that are uniformly perfect. Uniformly perfect compact
doubling metric spaces are quasisymmetric to some Ahlfors regular metric space
(see~\cite[Theorem~13.3, Corollary~14.15]{Hei} for instance). It follows that in our setting the
Ahlfors regular conformal dimension is finite. The result~\cite[Theorem~1.3]{Car} gives a way of computing this dimension
by the use of discrete approximations of the metric space and the notion of combinatorial modulus. The goal of this
note is to provide an expository proof of this theorem, as the proof found in~\cite{Car} uses a more general construction
that involves many parameters, thus obscuring the basic idea underlying the result. The following is the main focus of this
note, and is Theorem~1.3 from~\cite{Car}.

\begin{thm}\label{thm:main}
Let $(X,d)$ be a compact, doubling, and uniformly perfect metric space. Then the Ahlfors regular conformal dimension
of $(X,d)$ equals the infimum of all $p>0$ for which the combinatorial modulus $M_p(X,d)$ of the metric space $(X,d)$ 
is zero.
\end{thm}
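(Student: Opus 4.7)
Write $Q_0$ for the Ahlfors regular conformal dimension of $(X,d)$ and $p_0=\inf\{p>0:M_p(X,d)=0\}$. I would prove the two inequalities $p_0\le Q_0$ and $Q_0\le p_0$ separately. The first is the ``soft'' direction and relies on the quasisymmetric invariance of the vanishing of combinatorial modulus; the second is the content-rich direction and requires a geometric construction on a hyperbolic filling of $X$.

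\textbf{The inequality $p_0\le Q_0$.} Fix any Ahlfors $Q$-regular metric space $(X,\rho)$ that is quasisymmetric to $(X,d)$. I would first verify that the assignment $(X,d)\mapsto \inf\{p:M_p(X,d)=0\}$ depends only on the quasisymmetry class of $d$. Combinatorial modulus is defined through admissible weights on snapshot coverings at scales $2^{-n}$, and under a quasisymmetry a covering at scale $2^{-n}$ in one metric pulls back to something comparable to a covering at a controlled scale in the other; since the uniform perfectness and doubling conditions are quasisymmetric invariants, the distortion control is enough to show $M_p(X,d)=0$ iff $M_p(X,\rho)=0$. Then, on an Ahlfors $Q$-regular space, one checks that $M_p=0$ for every $p>Q$ by constructing explicit admissible weights whose $p$-mass is $o(1)$ (the natural radius-based weights, whose $p$-sums are estimable via the doubling of $Q$-Hausdorff measure). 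Taking the infimum over Ahlfors regular representatives of the conformal gauge gives $p_0\le Q_0$.

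\textbf{The inequality $Q_0\le p_0$.} This is the heart of the matter and is where the paper's hyperbolic filling machinery is employed. Fix $p>p_0$, so that $M_p(X,d)=0$. The plan is to construct, for each such $p$, an Ahlfors $p$-regular metric $\rho$ on $X$ that is quasisymmetric to $d$; the existence of such $\rho$ then yields $Q_0\le p$ and, letting $p\downto p_0$, the desired inequality. The construction proceeds in stages. First, I would form the Gromov hyperbolic filling $Z$ of $(X,d)$: a graph whose vertices are balls $B(x,2^{-n})$ at dyadic scales and whose edges record incidence across or within scales, equipped with a combinatorial metric. Standard results give a visual metric on $\partial_\infty Z$ quasisymmetric to $d$. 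Next, using the vanishing of $M_p$, I would extract a sequence of admissible weights $w_n$ on the level-$n$ covering whose $p$-mass tends to $0$; after a Carrasco-type averaging/summability argument I obtain a single weight function $w$ on the vertices of $Z$ that decays geometrically along vertical geodesics while satisfying a controlled summability. I would then define a new length structure on $Z$, where the length of an edge leading from level $n$ to level $n+1$ at a vertex $v$ is $-\log w(v)$, and show that the resulting path metric $d_w$ is still Gromov hyperbolic with boundary at infinity canonically identified with $X$. The induced visual metric $\rho$ on $X$ is then quasisymmetric to $d$ because $d_w$ is roughly similar to the original filling metric along the ``vertical'' directions. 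Finally, I would define a Borel measure $\mu$ on $X$ by summing atoms of mass $w(v)^p$ against the shadows of level-$n$ vertices $v$, and show that $\mu(B_\rho(x,r))\asymp r^p$.

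\textbf{Main obstacle.} The delicate step is the construction of the weight $w$ and the verification that the weighted hyperbolic filling delivers an Ahlfors $p$-regular boundary. The upper bound $\mu(B_\rho(x,r))\lesssim r^p$ follows directly from the $p$-summability of the weights, but the lower bound $\mu(B_\rho(x,r))\gtrsim r^p$ is more subtle: it requires exploiting admissibility (i.e., the lower bound on the weighted mass of every ``horizontal'' chain of pieces crossing a ball), together with uniform perfectness of $X$ to guarantee enough combinatorial complexity at each scale. This is where the intricate parameter balancing in \cite{Car} enters; the exposition here should isolate these parameters and present the cleanest possible version, which is precisely the announced aim of this note.
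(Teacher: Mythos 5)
Your outline follows essentially the same route as the paper for both inequalities, so let me focus on the places where the organization differs and where a blind reader might fall into a trap. For $p_0\le Q_0$ you propose to first establish that the vanishing of $M_p$ is a quasisymmetric invariant and then compute $M_p$ directly on an Ahlfors $Q$-regular representative; the paper instead does both in one step, taking a fixed $\theta\in\mathcal{G}(X,d)$ that is Ahlfors $q$-regular for some $q<p$ and building the admissible weight $\sigma(v')\asymp\diam_\theta(B_{v'})/\diam_\theta(2B_v)$ on the $d$-filling, whose $p$-mass it bounds by a constant times $\eta(K_d^2\alpha^{-k})^{p-q}$. These are the same tools differently packaged, but be careful: the quasisymmetric invariance of ``$M_p=0$'' that you invoke is not an off-the-shelf fact, and the paper explicitly remarks that the analogous invariance under change of filling is deduced \emph{from} Theorem~\ref{thm:main}, not fed into its proof; you would need to prove your invariance lemma separately with the diameter-comparison estimates in hand, which amounts to what the paper's Lemma~\ref{lem:theta-diam-comparison} and the subsequent admissibility computation do. For $Q_0\le p_0$ your plan — extract nearly optimal admissible weights, merge them into a single geometrically decaying weight on a hyperbolic filling, form a visual metric, and read off a $p$-regular measure by summing $w(v)^p$ over shadows — is precisely the paper's plan, packaged there as constructing a density $\rho$ on a coarsened filling $G[n_0]$ satisfying conditions (H1)--(H4) of Carrasco Piaggio's Theorem~1.1 (the paper's Theorem~\ref{thm:Car1.1}), which is then quoted as a black box. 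You correctly identify the crux: the lower bound on the measure, which in the paper's framework is Condition~(H3), proved via admissibility and the $\ell$-length path-replacement machinery of \S\ref{Sub:H3}. What your write-up elides — and what absorbs most of the paper's effort — is the bootstrapping chain $\sigma\to\mu_1\to\mu_2\to\pip\to\rho$ needed to turn the raw modulus-minimizing weights into one satisfying all four conditions simultaneously, together with the use of a coarser filling $G[n_0]$ (so that one level of descent captures an admissibility estimate) and the auxiliary tree subgraph $Z[n_0]$; without some such scheme the ``single weight $w$'' you posit need not have the uniform pinching and neighbor-comparability that the visual-metric construction requires.
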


An elegant generalization of the above theorem of Carrasco Piaggio was given by Mathav Murugan~\cite{Mur} in the context
of compact doubling metric spaces that are not uniformly perfect; in this case, the conformal gauge $\mathcal{G}(X,d)$
is not the collection of all Ahlfors regular spaces that are quasisymmetric to $(X,d)$ but the collection of all
metrics $\theta$ on $X$ with respect to which the identity map from $(X,d)$ to $(X,\theta)$ is power quasisymmetric, without insisting on $(X,\theta)$ being Ahlfors regular. A homeomorphism is a power quasisymmetry if the distortion function $\eta(t)$ 
associated with the quasisymmetry is a power function.

With that modification in mind,~\cite{Mur} considers the infimum of the Assouad dimension of $(X,\theta)$ over all $\theta$ in
the gauge rather than the Hausdorff
dimensions. In~\cite{Mur} a doubling measure is first constructed on $(X,d)$ and then an intricate re-distribution of 
the measure is conducted according to the density function $\rho$ on the hyperbolic filling graph in order to 
gain a new metric $\theta_\rho$ that is power quasisymmetric to the original metric $d$. This is a complication that
is necessitated by the lack of uniform perfectness. Our focus is to give a simplified exposition of Carrasco Piaggio's theorem
under the assumption of uniform perfectness, thus avoiding the complications encountered in~\cite{Mur}.

\section{Background notions}

In this section we list the basic notions and standing assumptions used throughout this note, and describe a construction of hyperbolic filling of
compact doubling metric measure spaces.

In this paper, $(X,d)$ denotes a compact metric space. For $x\in X$ and $r>0$, we denote the (open) ball centered at $x$ with 
radius $r$ by $B(x,r):=\{y\in X\, :\, d(x,y)<r\}$. Then, with the center and the radius of a ball $B=B(x,r)$ fixed, and $\lambda>0$,
by $\lambda B$ we mean the concentric ball $B(x,\lambda r)$.

In the rest of the paper, given two quantities $A_1$ and $A_2$, we say that $A_1\lesssim A_2$ if there is a constant $C>0$, dependent only on
the data associated with $X$, such that $A_1\le C\, A_2$. We do this as we do not intend to keep track of the precise constants $C$ apart 
from the parameters it depends on.

\subsection{Basic notions for metric spaces}
\begin{enumerate}
\item[(i)] Recall
that $(X,d)$ is \emph{doubling} if there is a constant $N\ge 1$ such that whenever
$x\in X$, $r>0$, and $A\subset B(x,r)$ is such that 
$A$ is $r/2$-separated, i.e.,~for all $x,y\in A$ with $x\ne y$ we have $d(x,y)\ge r/2$, then the 
cardinality of $A$ is at most $N$. 

\item[(ii)]We will also assume that $(X,d)$ is \emph{uniformly perfect}, that is, 
$X$ has at last two points and there is
a constant $K_d\ge 2$ such that whenever $x\in X$ and $r>0$ with $X\setminus B(x,r)$ 
non-empty, then $B(x,r)\setminus B(x,r/K_d)$ is also non-empty. Without loss of generality we will assume that $K_d>2$.

\item[(iii)] With $(X,d_X)$ satisfying the assumptions outlined in the previous paragraphs, we know from~\cite{Hei} that
there is an Ahlfors regular metric space $(Y,d_Y)$ and a quasisymmetric map $\varphi \colon X\to Y$. Here, by
$(Y,d_Y)$ being \emph{Ahlfors regular} we mean that there is some $Q>0$ such that with $\mu$ the $Q$-dimensional
Hausdorff measure on $Y$ induced by the metric $d_Y$, there is a constant $C_0\ge 1$ such that
\[
\frac{1}{C_0}\, r^Q\le \mu(B_{d_Y}(y,r))\le C_0\, r^Q
\]
whenever $y\in Y$ and $0<r\le \diam(Y)$.  A quasisymmetric map $\varphi \colon X\to Y$ is a homeomorphism such that
we can find a homeomorphism $\eta \colon [0,\infty)\to[0,\infty)$ that satisfies
\[
\tfrac{d_Y(\varphi(x_1),\varphi(x_2))}{d_Y(\varphi(x_1),\varphi(x_3))}
  \le \eta\left(\tfrac{d_X(x_1,x_2)}{d_X(x_1,x_3)}\right)
\]
whenever $x_1,x_2,x_3\in X$ with $x_1\ne x_3$. The quasisymmetry is also called an $\eta$-quasisymmetry when 
we need to specify \emph{the distortion function} $\eta$. 

\item[(iv)] If $(X,d)$ is quasisymmetric to an Ahlfors regular metric space, then by  importing the metric $d_Y$ from $Y$ to $X$,
without loss of generality we may restrict ourselves to consider the class of all metrics $\theta$ on $X$ with respect to which
the natural identity map from $(X,d)$ to $(X,\theta)$ is quasisymmetric and $(X,\theta)$ is Ahlfors regular; the collection $\mathcal{G}(X,d)$
of all such metrics $\theta$ 
is called the \emph{Ahlfors regular quasisymmetric gauge} of the metric space $(X,d)$. For each $\theta\in\mathcal{G}(X,d)$
let $p_\theta$ be the Hausdorff dimension of $(X,\theta)$; note that the $p_\theta$-dimensional Hausdorff measure on $(X,\theta)$
is Ahlfors $p_\theta$-regular.

\item[(v)] The \emph{Ahlfors regular conformal dimension} of $(X,d)$ is the (finite) number
\[
\inf\{p_\theta\, :\, \theta\in\mathcal{G}(X,d)\}.
\]
\end{enumerate}

\subsection{Hyperbolic filling}\label{subsec:hyp-fill}

We now describe a construction of hyperbolic filling of $(X,d)$. Our construction deviates somewhat from that of~\cite{Car},
and follows that of~\cite{BBS, Sh} more closely. The constructions in~\cite{Car, BBS, Sh, BoSa} are variants of the
construction given in~\cite{BP, Gro}, and the book~\cite{BuSch} has a good exposition on this subject.
\begin{enumerate}
\item By rescaling the metric on $X$ if necessary, we may assume without loss of generality that $\diam(X)=\frac12$. 
\item We
fix $\alpha> \max\{2, K_d^3\}$, where $K_d$ is the uniform perfectness constant of $(X,d)$ with $K_d>2$. 
\item Next we fix $\tau>0$ such that 
$\tau>\max\{6, 2(1+\alpha^{-1}), 2K_d^3/(K_d^2-4)\}$. We next fix $x_0\in X$ and set $A_0=\{x_0\}$. 
\item Inductively we construct, for
each positive integer $n$, a set $A_n\subset X$ such that $A_0\subset A_1$, $A_n\subset A_{n+1}$ for each positive integer $n$,
for all $n\in\N$ and all $x,y\in A_n$ with $x\ne y$ we have that $d(x,y)\ge \alpha^{-n}$, and 
\[
\bigcup_{x\in A_n}B(x,\alpha^{-n})=X.
\]
The last condition above tells us that $A_n$ is a \emph{maximal} $\alpha^{-n}$-separated subset of $X$.
\item We set 
\[
V:=\bigcup_{n\in\N \cup \{0\}}A_n\times\{n\}. 
\]
The set $V$ will act as the vertex set for the graph that will be the
hyperbolic filling of $X$. For $k=0,1,\cdots$, we also set 
\[
V_k:=A_k\times\{k\}.
\]
The root vertex, always denoted by $v_0=(x_0,0)$, will play a distinguished role in this note.
\item We say that two vertices $v=(x,n), w=(y,m)\in V$ are neighbors if $(x,n)\ne (y,m)$, $|n-m|\le 1$, and
either $n=m$ and $B(x,\tau \alpha^{-n})$ intersects $B(y,\tau \alpha^{-n})$, or $n\in\{m-1,m+1\}$ and
$B(x,\alpha^{-n})$ intersects $B(y,\alpha^{-m})$. If the two vertices $v$ and $w$ are neighbors, then we denote
this fact by the notation $v\sim w$. Moreover, if $n=m$ we say that $v$ is a horizontal neighbor of $w$, and
if $n\ne m$ then we say that $v$ is a vertical neighbor of $w$. Moreover, when $m=n+1$, we say that
$w$ is a \emph{child} of $v$ and that $v$ is a \emph{parent} of $w$.
\end{enumerate}

\begin{defn}
	The vertex set $V$, together with the above neighborhood relationship, defines a graph $G$, which we will call
	a hyperbolic filling of $X$.
\end{defn}

A vertex $v=(x,n)$ is said to belong to generation $n$; note that $x\in A_n$. 
We also use the notation $|v|$ to denote the generation number of $v$; so $|(x,n)|=n$.
We say that a path $v_1\sim v_2\sim\cdots\sim v_k$
is a vertically descending path if for $j=1,\cdots, k-1$ we have $|v_{j+1}|=|v_j|+1$. If $(x,n)\in V$ and $k\in\N$ such that $k>n$, then by
$D_k((x,n))$ we mean the collection of all vertices $w\in V$ for which there is a vertically descending path from
the vertex $(x,n)$ to $w$ and so that $w$ is of generation $k$. The vertices in $D_k((x,n))$ are said to be the \emph{generation
$(k-n)$ descendants} of the vertex $(x,n)$.

Between each pair of vertices $v,w\in V$ with $v\sim w$, we glue a unit interval to convert $G$ from a combinatorial
graph to a connected metric graph. From~\cite{BBS} we know that such a metric graph is a Gromov hyperbolic geodesic
space, but we will not need the notion of Gromov hyperbolicity in this note. Thus, from now on, $G$ will be the metric
graph. Then, with $\rho \colon V\to (0,\infty)$ fixed, we can use $\rho$ to construct a new metric $d_\rho$ on $G$
as follows. For each $v\in V$ we choose a vertically descending path $v_0\sim v_1\sim\cdots\sim v_k=v$, and set
\[
\pi(v):=\Pi_{j=0}^k\,\rho(v_j).
\]
We then extend $\pi$ linearly to the edges connecting two neighboring vertices $v\sim w$ by setting
$\pi(x)=(1-t)\, \pi(v)+t\, \pi(w)$ where $x$ is at distance $t$ from the vertex $v$ (and hence a distance $1-t$ from
the vertex $w$). Now, for $x,y\in G$ we set
\[
d_\rho(x,y)=\inf_\gamma\int_\gamma \pi\, ds,
\]
where the infimum is over all paths $\gamma$ in $G$ with end points $x$ and $y$. The metric $d_\rho$ has a natural
extension to $X$, as $X$ is identified with the visual boundary of $G$, see for instance~\cite[Theorem~4.2]{Sh}.

\subsection{Combinatorial modulus of $(X,d)$}

In this subsection we fix $p>0$.
For every vertex $v=(x,n)\in V$, there is a naturally associated
ball $B_v:=B(x,\alpha^{-n})$ to $v$. Fix $v =(x,n)\in V$ and $k\in\N$. By $\Gamma_{k}(B_v)$ we mean the collection of all paths 
$(x_1,n+k)\sim (x_2,n+k)\sim\cdots\sim(x_m,n+k)$ in $G$
such that $B(x_1,\alpha^{-(n+k)})$ intersects $B_v$ and $B(x_m,\alpha^{-(n+k)})$ intersects
$X\setminus 2B_v$. Recall from the standard notation that $2B_v=B(x,2\alpha^{-n})$.
A non-negative function $\sigma \colon V\to[0,\infty)$ is said to be 
\emph{admissible} for computing the combinatorial $p$-modulus $\Mod_p(\Gamma_k(B_v))$ if 
\[
\sum_{j=1}^m\sigma((x_j,n+k))\ge 1
\]
whenever the path $(x_1,n+k)\sim (x_2,n+k)\sim\cdots\sim(x_m,n+k)$ is in $\Gamma_k(B_v)$.
We set 
\[
\Mod_p(\Gamma_k(B_v)):=\inf_\sigma\, \sum_{y\in A_{n+k}}\sigma((y,n+k))^p,
\]
with the infimum over all functions $\sigma$ that are admissible for computing $\Mod_p(\Gamma_k(B_v))$.

For $k\in\N$ we set 
\[
\Mod_p(k):=\sup_{v\in V}\Mod_p(\Gamma_k(B_v)).
\]
The \emph{combinatorial $p$-modulus} of $(X,d)$ is the number
\[
M_p(X,d):=\liminf_{k\to\infty}\Mod_p(k).
\]

\begin{remark}\label{rem:setup-n-naught}
Note that if $M_p(X,d)=0$, then for each $\eps_0>0$ we can find a positive integer $n_0$ such that 
$\Mod_p(n_0)<\eps_0$, and so for each $v\in V$ we have 
\[
\Mod_p(\Gamma_{n_0}(B_v))<\eps_0.
\]
There are arbitrarily large choices of $n_0$ satisfying the above condition.

\end{remark}

\begin{remark}
The specific value of $\Mod_p(X,d)$ can possibly depend on the choice of the hyperbolic filling graph,
but the positivity versus nullity of $\Mod_p(X,d)$ is independent of this choice. Such independence is
known to hold as a consequence of Theorem~\ref{thm:main}.
\end{remark}

\subsection{A tale of two graphs}\label{subsec:n-naught}

Unlike in~\cite{Sh}, in the present note we will use not one but two hyperbolic fillings of $(X,d)$. The first was
described in Subsection~\ref{subsec:hyp-fill} above, and the second hyperbolic filling is constructed from the first
by eliminating certain levels of $G$. With $n_0\in \N$ fixed, we now consider 
\[
V[n_0]:=\bigcup_{k\in\N\cup \{0\}}A_{n_0\, k}\times\{k\},
\]
that is, we consider the level $0$ as of level $0$, but level $n_0$ from $V$ is now considered to be level $1$,
level $n_0+n_0$ from $V$  to be level $2$, level $2n_0+n_0$ to be level $3$, and so on. Two vertices
$v=(x,j)$ and $w=(y,k)$ in $V[n_0]$ are declared neighbors if $v\ne w$ and either $j=k$ and the balls 
$B(x,\tau \alpha^{-n_0j})$ and $B(y,\tau\alpha^{-n_0j})$ intersect, or else $j\in \{k-1,k+1\}$ and the balls
$B(x,\alpha^{-n_0j})$ and $B(y,\alpha^{-n_0k})$ intersect. The corresponding graph will be denoted $G[n_0]$.

We emphasize here that the construction of $G[n_0]$ is directly connected to the construction of the original graph $G$,
and both $G$ and $G[n_0]$ are hyperbolic fillings of $X$. On the other hand, if $\Mod_p(n_0)<\eps_0$ for the 
graph $G$, then $\Mod_p(1)<\eps_0$ for the graph $G[n_0]$. 

\emph{The main goal of this note is to give a more expository proof of~\cite[Theorem~1.3]{Car}, which states that 
the Ahlfors regular conformal dimension of $(X,d)$ is equal to the infimum over all $p$ for which $M_p(X,d)=0$.}

\subsection{A characterization of Ahlfors regular conformal gauge in terms of the hyperbolic filling}

In this subsection we describe a result of Carrasco Piaggio characterizing the conformal gauge $\mathcal{G}(X,d)$
in terms of a class of functions on the hyperbolic filling $G$. As our construction of $G$ is not as complicated
as the one found in~\cite{Car}, we refer the interested reader to~\cite[Theorem~1.1, Theorem~6.3, Section~7]{Sh} where 
this theorem~\cite[Theorem~1.1]{Car} is proved for our construction as found in Subsection~\ref{subsec:hyp-fill} above.
We will make use of this theorem in proving the main theorem of this note.

\begin{thm}{{\rm{\cite[Theorem~1.1]{Car}}}} \label{thm:Car1.1}
Let $(X,d)$ be a compact doubling metric space that is also uniformly perfect. Let $G$ be the hyperbolic filling
constructed in Subsection~\ref{subsec:hyp-fill}. Suppose that there is some function $\rho:V\to(0,1)$
such that the following conditions are satisfied:
\begin{enumerate}
\item[(H1)] there are constants $\eta_-,\, \eta_+$ with $0<\eta_-\le \eta_+<1$ such that 
$\rho:V\to[\eta_-,\eta_+]$,
\item[(H2)] there exists a constant $K_0>0$ such that whenever $v,w\in V$ with $v\sim w$, and  
$v_0\sim v_1\sim\cdots\sim v_k=v$ and $v_0\sim w_1\sim\cdots\sim w_n=w$ are vertically descending paths from the root vertex
$v_0$ to $v$ and $w$, respectively, then 
\[
\Pi_{j=0}^k\rho(v_j)\le K_0\, \Pi_{j=0}^n\rho(w_j),
\]
and so in particular, $K_0^{-1}\, \pi(w)\le \pi(v)\le K_0\, \pi(w)$,
\item[(H3)] there is a constant $K_1>0$ such that whenever $v,w\in V$ and 
$v_0\sim v_1\sim\cdots\sim v_k=v$ and $v_0\sim w_1\sim\cdots\sim w_n=w$ are vertically descending paths from the root vertex
$v_0$ to $v$ and $w$, respectively, setting $z_{v,w}$ to be the largest generation vertex that is either common to both paths,
or is in the first path and is a neighbor of a vertex in the second path, or is in the second path and is a neighbor of a vertex in
the first path, we have
\[
\int_\gamma\pi\, ds\ge K_1^{-1}\, \pi(z_{v,w})
\]
for all paths $\gamma$ in $G$ with end points $v,w$, and
\item[(H4)] there exists $p>0$ and a constant $K_2>0$ such that whenever $x\in A_n$ and $k\in\N$ with $k>n$, we have
\[
K_2^{-1}\, \pi((x,n))^p\le \sum_{v\in D_k((x,n))}\pi(v)^p\le K_2\, \pi((x,n))^p.
\]
\end{enumerate}
Then $d_\rho$ belongs to the gauge $\mathcal{G}(X,d)$ and $(X,d_\rho)$ is Ahlfors $p$-regular.
Moreover, 
when $\tau$ also satisfies $\tau>2(1+\alpha^2)$, then
every Ahlfors regular metric $(X,\theta)$
that is quasisymmetric to $(X,d)$ is biLipschitz equivalent to the 
metric $d_\rho$ constructed above,
generated via a function $\rho$
satisfying the above four conditions. 
\end{thm}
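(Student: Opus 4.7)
The plan is to split the proof of Theorem~\ref{thm:Car1.1} into two parts: first, the \emph{sufficiency} direction that conditions (H1)--(H4) yield an Ahlfors $p$-regular metric $d_\rho \in \mathcal{G}(X,d)$, and second, the \emph{necessity} direction that every Ahlfors regular metric quasisymmetric to $(X,d)$ arises this way up to biLipschitz equivalence.

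For sufficiency, the key object is the \emph{meeting vertex} $z_{\xi,\xi'}$ of vertically descending rays from $v_0$ to two boundary points $\xi,\xi' \in X$. First I would establish that for any two boundary points,
\[
d_\rho(\xi,\xi') \asymp \pi(z_{\xi,\xi'}),
\]
where the upper bound comes from concatenating the two descending rays and telescoping products of $\rho$'s via (H2), while the lower bound is exactly (H3) (combined with (H1) to handle the boundary limit). The hyperbolic filling construction guarantees that $|z_{\xi,\xi'}|$ is comparable to $\log_\alpha(1/d(\xi,\xi'))$, a fact that uses both doubling and uniform perfectness of $(X,d)$. Together with (H1) this comparison forces the identity from $(X,d)$ to $(X,d_\rho)$ to be a power quasisymmetry. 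For the Ahlfors $p$-regularity of $(X,d_\rho)$, I would cover $X$ by the \emph{shadows} of generation-$n$ vertices (the shadow of $v$ being the set of $\xi \in X$ whose descending ray passes through $v$). By the key comparison, the $d_\rho$-diameter of each shadow is $\asymp \pi(v)$, and (H4) then delivers Ahlfors $p$-regularity of the Hausdorff $p$-measure, since the total mass at level $n+k$ of descendants of $v$ stays $\asymp \pi(v)^p$ uniformly in $k$.

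For necessity, given an Ahlfors $p$-regular $\theta \in \mathcal{G}(X,d)$ with $p$-dimensional Hausdorff measure $\mu$, I would set $\pi(v) \asymp \diam_\theta(B_v)$ and $\rho(v) := \pi(v)/\pi(\text{parent of }v)$ along a chosen vertically descending ray. Properties (H1) and (H2) then follow from the power-quasisymmetry of the identity map together with doubling of $\theta$: neighboring balls have comparable $\theta$-diameters, giving uniform two-sided bounds on $\rho$, and any two ancestral paths to the same vertex yield comparable products. Property (H4) is a direct translation of Ahlfors $p$-regularity, since the descendants of $v$ at generation $|v|+k$ cover $B_v$ with controlled overlap and their $\theta$-measures sum to $\asymp \mu(B_v)\asymp \pi(v)^p$. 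Property (H3) is the subtlest: any path in $G$ joining $v$ and $w$ must traverse a vertex whose shadow meets both shadows of $v$ and $w$, picking up $\pi$-integral of order $\pi(z_{v,w})$ because horizontal moves cost $\pi$-mass at that scale and vertical detours cost more thanks to (H1). The hypothesis $\tau>2(1+\alpha^2)$ is invoked here to guarantee that balls with disjoint $\tau$-enlargements have sufficiently separated shadows to rule out horizontal shortcuts. The biLipschitz identification of $d_\rho$ with $\theta$ then follows because both metrics are comparable to $\pi(z_{\xi,\xi'})$.

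The main obstacle I anticipate is quantifying the comparison $|z_{\xi,\xi'}| \asymp \log_\alpha(1/d(\xi,\xi'))$ from only doubling and uniform perfectness while tracking the dependence on $\tau$. A secondary difficulty is the visual-boundary extension of $d_\rho$: Cauchy sequences along vertically descending rays must converge in $(X,d_\rho)$, and the limit metric must recover the original topology on $X$; both rest on (H1), which forces geometric decay of $\pi$ along rays, while the Ahlfors-regularity covering argument additionally invokes doubling to bound the number of shadows at each scale. I would expect these to be managed by the standard hyperbolic-filling and visual-metric machinery, as developed in~\cite{BuSch} and explicitly adapted to our construction of hyperbolic filling in~\cite{Sh}.
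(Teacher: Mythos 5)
You should first note a structural point: this paper never proves Theorem~\ref{thm:Car1.1}. It is quoted from \cite[Theorem~1.1]{Car}, and for the particular hyperbolic filling of Subsection~\ref{subsec:hyp-fill} the authors explicitly refer to \cite[Theorem~1.1, Theorem~6.3, Section~7]{Sh} for the proof. So there is no internal argument to compare yours against; what you outline is, in spirit, exactly the visual-metric strategy of those references (establish $d_\rho(\xi,\xi')\asymp \pi(z_{\xi,\xi'})$ using (H1)--(H3), use shadows of vertices together with (H4) to produce the Ahlfors $p$-regular measure, and for the converse read off a weight from $\theta$-diameters of the balls $B_v$). As a roadmap it is aligned with the literature the paper cites; deferring the heavy machinery to \cite{BuSch, Sh} is consistent with how the paper itself treats this theorem.

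If, however, your text is intended as an actual proof rather than a citation plus sketch, there are concrete gaps in the necessity direction. First, (H1) requires $\rho\le \eta_+<1$ \emph{strictly}; the fact that neighbouring balls have comparable $\theta$-diameters only yields two-sided bounds by constants that need not be below $1$, and to obtain a definite contraction one must exploit uniform perfectness and quasisymmetry across sufficiently many generations (or pass to a coarser filling), precisely the kind of multi-scale adjustment carried out in \cite{Car, Sh} and mirrored in Section~\ref{Sec:4} of this paper. Second, (H2) as stated quantifies over \emph{all} vertically descending paths, whereas your choice $\rho(v)=\pi(v)/\pi(\text{parent of }v)$ telescopes only along the chosen genealogy; along an arbitrary descending path the per-step comparison errors (ratios of diameters of nearby same-scale balls) multiply, so uniform comparability of the products does not follow from neighbourwise comparability of diameters, and a more careful definition or verification is needed. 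Third, the quantitative ingredients you lean on --- $|z_{\xi,\xi'}|\asymp\log_\alpha\bigl(1/d(\xi,\xi')\bigr)$, the identification of $X$ with the visual boundary, and the precise way $\tau>2(1+\alpha^2)$ rules out horizontal shortcuts in the verification of (H3) --- are asserted rather than proven, and they carry much of the actual work in \cite{Sh}. So your proposal should be read as an accurate summary of the cited proof's architecture, not as an independent proof of the theorem.
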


In~\cite[Theorem~1.1 and Theorem~6.3]{Sh} the various roles of each of the four conditions (H1)---(H4) are explored, and
we refer the interested reader to~\cite{Sh} for more on this aspect.

\section{Proof of Theorem~\ref{thm:main}: lower bound for the Ahlfors regular conformal dimension}

The focus of this section is to provide a proof of one part of Theorem~\ref{thm:main}, namely that
whenever $p$ is larger than the Ahlfors regular conformal dimension of $(X,d)$, we have that $M_p(X,d)=0$.
In other words, if $p$ is such that $M_p(X,d)>0$, then necessarily the Ahlfors regular conformal dimension of $(X,d)$ is at least $p$.

To this end, we first need the following lemma.

\begin{lem}\label{lem:theta-diam-comparison}
Fix $(x,n)=v\in V_n$, $ n \in \mathbb N \cup \{0\}$.  Let $k\in\N$ and set 
$P_k(v):=\{v'\in V_{n+k}\, :\, B_{v'}\cap\overline{2B_v}\ne\emptyset\}$. For $v'\in P_k(v)$ we set $x'\in X$ such that
$v'=(x',n+k)$.
Suppose that  
$\theta\in\mathcal{G}(X,d)$, with distortion function $\eta$. Then for every
$z\in B_{v'}\setminus \tfrac{1}{K_d}\, B_{v'}$ we have 
\[
\theta(x',z)\le \diam_\theta(B_{v'})\le 2\, \eta(K_d)\, \theta(x',z),
\]
and
\[
\frac{\diam_\theta(B_{v'})}{\diam_\theta(2B_v)}\le 2\, \eta(K_d)\, \eta(3K_d)\, \eta(K_d^2\, \alpha^{-k}).
\]
\end{lem}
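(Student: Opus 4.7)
The plan is to deduce both inequalities of the first display directly from the $\eta$-quasisymmetry applied to well-chosen triples, and then to obtain the ratio estimate by comparing $\theta(x',z)$ with $\theta(x,w)$ where $w\in 2B_v$ is supplied by uniform perfectness. For the first display, $\theta(x',z)\le\diam_\theta(B_{v'})$ is immediate from $x',z\in B_{v'}$; for the opposite direction I fix any $y\in B_{v'}$ and apply the $\eta$-quasisymmetry with distinguished point $x'$ to the triple $(x',y,z)$. Since $d(x',y)<\alpha^{-(n+k)}$ and $d(x',z)\ge\alpha^{-(n+k)}/K_d$, the ratio inside $\eta$ is at most $K_d$, so $\theta(x',y)\le\eta(K_d)\theta(x',z)$; the triangle inequality for $\theta$ applied to an arbitrary pair $y_1,y_2\in B_{v'}$, followed by passage to the supremum, then yields $\diam_\theta(B_{v'})\le 2\eta(K_d)\theta(x',z)$.

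For the ratio estimate I first apply uniform perfectness at $x$ with radius $2\alpha^{-n}$ to obtain $w\in 2B_v$ with $d(x,w)\ge 2\alpha^{-n}/K_d$, so that $\theta(x,w)\le\diam_\theta(2B_v)$. Picking $y_0\in B_{v'}\cap\overline{2B_v}$, the triangle inequality gives $d(x,x')\le 2\alpha^{-n}+\alpha^{-(n+k)}\le 3\alpha^{-n}$. In view of the first display, it suffices to show
\[
\theta(x',z)\le\eta(3K_d)\,\eta(K_d^2\alpha^{-k})\,\theta(x,w),
\]
which I establish by a case split on the size of $d(x,x')$ relative to $\alpha^{-n}/K_d$.

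If $d(x,x')\le\alpha^{-n}/K_d$, then $d(x',w)\ge\alpha^{-n}/K_d$ by the triangle inequality. Applying the $\eta$-quasisymmetry at $x'$ to $(x',z,w)$ then gives $\theta(x',z)\le\eta(K_d\alpha^{-k})\theta(x',w)\le\eta(K_d^2\alpha^{-k})\theta(x',w)$, and applying it at $w$ to $(w,x',x)$, whose ratio is at most $K_d+\tfrac12\le 3K_d$, gives $\theta(x',w)\le\eta(3K_d)\theta(x,w)$. If instead $d(x,x')>\alpha^{-n}/K_d$, then using $\alpha^{-(n+k)}\le\alpha^{-n}/K_d^3$ (from $\alpha>K_d^3$ and $k\ge 1$), the triangle inequality gives $d(x,z)\ge d(x,x')-d(x',z)\ge 3\alpha^{-n}/(4K_d)$. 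The $\eta$-quasisymmetry at $z$ applied to $(z,x',x)$ then has ratio at most $4K_d\alpha^{-k}/3\le K_d^2\alpha^{-k}$ (using $4/3<K_d$), giving $\theta(x',z)\le\eta(K_d^2\alpha^{-k})\theta(x,z)$, and the $\eta$-quasisymmetry at $x$ applied to $(x,z,w)$ has ratio at most $3K_d/2\le 3K_d$, giving $\theta(x,z)\le\eta(3K_d)\theta(x,w)$. Either branch yields the displayed inequality.

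The main obstacle I anticipate is that a single direct application of $\eta$-quasisymmetry at $x'$ to $(x',z,w)$ cannot be used to bound $\theta(x',z)/\theta(x,w)$, because $d(x',w)$ admits no positive lower bound in general: $x$ and $x'$ may be separated by nearly $3\alpha^{-n}$ inside the ball $2B_v$ of radius $2\alpha^{-n}$, so $x'$ may land almost on top of $w$ in the $d$-metric. The case split resolves this by transferring the base point of the quasisymmetry estimate from $x'$ to $x$ via $z$ precisely in the regime where $d(x,x')$ is large, exploiting that $z$ is close to $x'$ yet therefore automatically far from $x$.
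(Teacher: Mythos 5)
Your proof is correct and follows essentially the same strategy as the paper: you prove the first display by a single application of quasisymmetry at $x'$, and you obtain the ratio bound by writing $\theta(x',z)/\theta(x,w)$ as a product of two quasisymmetry ratios, splitting into two cases according to whether $x'$ is near or far from $x$ precisely because $d(x',w)$ has no a priori lower bound. The paper's case split uses threshold $\alpha^{-n}/K_d^2$, chooses $w$ with $d(x,w)\ge\alpha^{-n}/K_d$, and in the far case pivots through $\theta(x',x)$, whereas you threshold at $\alpha^{-n}/K_d$, take $d(x,w)\ge 2\alpha^{-n}/K_d$, and pivot through $\theta(x,z)$; these are cosmetic variations that land on the same bound.
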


\begin{proof}
By uniform perfectness of $X$ and $\diam X = \frac{1}{2}$, we have $B_{v'} \setminus \frac{1}{K_d}B_{v'} \neq \emptyset$. 
Fix an arbitrary point
$z\in B(x',\alpha^{-(n+k)})\setminus B(x',\alpha^{-(n+k)}/K_d)$.
Then 
\[
\theta(x',z)\le \diam_\theta(B_{v'})\le 2\, \sup\{\theta(x', y)\, :\, y\in B_{v'}\}.
\]
Note by the quasisymmetry that for every $y\in B_{v'}=B(x',\alpha^{-(n+k)})$ we have
\[
\frac{\theta(x',y)}{\theta(x',z)}\le \eta\left(\frac{d(x',y)}{d(x',z)}\right)\le \eta\left(\frac{\alpha^{-(n+k)}}{\alpha^{-(n+k)}/K_d}\right)
=\eta(K_d).
\]
Hence 
\[
\theta(x',z)\le \diam_\theta(B_{v'})\le 2\, \eta(K_d)\, \theta(x',z),
\]
which verifies the first claim of the lemma.

Note also that with $w\in 2B_v\setminus \tfrac{1}{K_d}\, B_v$, from the above inequality
we have 
\[
\frac{\diam_\theta(B_{v'})}{\diam_\theta(2B_v)}\le 2\, \eta(K_d)\,  \frac{\theta(x',z)}{\theta(x,w)}.
\]
Two possible cases arise: either $x'\in\tfrac{1}{K_d^2}B_v$, or $x'\not\in\tfrac{1}{K_d^2}B_v$. We consider them separately
to complete the proof.

\noindent {\bf Case 1:} $x'\in\tfrac{1}{K_d^2}B_v$. In this case, note that $d(x',w)\ge d(x,w)-d(x',x)\ge \tfrac{K_d-1}{K_d^2}\alpha^{-n}$,
and so as we have ensured that $K_d>2$ and so $K_d-1>1$, we have
\begin{align*}
\frac{\diam_\theta(B_{v'})}{\diam_\theta(2B_v)}&\le 2\, \eta(K_d)\, \frac{\theta(x',z)}{\theta(x',w)}\, \frac{\theta(x',w)}{\theta(x,w)}\\
  &\le 2\, \eta(K_d)\, 
     \eta\left(\frac{\alpha^{-(n+k)}}{\tfrac{K_d-1}{K_d^2}\alpha^{-n}}\right)\, \eta\left(\frac{3\alpha^{-n}}{\alpha^{-n}/K_d}\right)\\
  &= 2\, \eta(K_d)\, \eta(3K_d)\, \eta(K_d^2\, \alpha^{-k}).
\end{align*}

\noindent {\bf Case 2:} $x'\not\in\tfrac{1}{K_d^2}B_v$. In this case, 
as $v'\in P_k(v)$, we have
\[
d(x,x')\le \alpha^{-(n+k)}+2\alpha^{-n}<3\alpha^{-n}.
\]
So
\begin{align*}
\frac{\diam_\theta(B_{v'})}{\diam_\theta(2B_v)}&\le 2\, \eta(K_d)\, \frac{\theta(x',z)}{\theta(x',x)}\, \frac{\theta(x',x)}{\theta(x,w)}\\
&\le 2\, \eta(K_d)\, \eta\left(\frac{\alpha^{-(n+k)}}{\alpha^{-n}/K_d^2}\right)\, \eta\left(\frac{3\,\alpha^{-n}}{\alpha^{-n}/K_d}\right)\\
&=2\, \eta(K_d)\, \eta(3\, K_d)\, \eta(K_d^2\, \alpha^{-k}).
\end{align*}

The above two cases together complete the proof of the lemma.
\end{proof}

\begin{proof}[Proof of first part of Theorem~\ref{thm:main}]
We will prove that if $p$ is larger than the Ahlfors regular conformal dimension of $(X,d)$, then $M_p(X,d)=0$.

Since Ahlfors regular conformal dimension is defined via an infimum, there exists $q<p$ and $\theta\in\mathcal{G}(X,d)$ such that $(X,\theta)$ is Ahlfors $q$-regular. We fix one such $q$.
Recall from the notation set up in the previous section that for $v=(x,n)\in V$, the ball $B_v$ denotes
$B(x,\alpha^{-n})$; this is a ball with respect to the metric $d$.

Fix $v = (x,n) \in V$, $k\in\N$, and define $\rho':V_{n+k}\rightarrow \mathbf{R}$ by 
\[
\rho'(v') = 
\begin{cases}
    \frac{\diam_{\theta}B_{v'}}{\diam_{\theta}2B_v} & \text{if } B_{v'}\bigcap \overline{2B_v} \neq \emptyset \\
    0 & \text{otherwise.}
\end{cases}
\]
We will now show that $C\rho'$ is admissible for computing $\Mod_p(\Gamma_k(B_v))$ for some constant $C\ge 1$
which is independent of $v$ and $k$. We now wish to estimate $\diam_\theta(2B_v)$. 
This is the focus of the next couple of paragraphs.

Take an arbitrary path $\{v_j\}_{j=1}^N$ in $\Gamma_k(B_v)$ with $v_j = (x_j, n+k)$. 
Each such path induces a "chain" of balls $\{B_{v_j}\}_{j=1}^N$ such that 
$\tau B_{v_j} \bigcap \tau B_{v_{j+1}}\neq \emptyset$ for each $j\in\{1,...,N-1\}$. 
We can assume without loss of generality that every element of such a chain intersects $\overline{2B_v}$, by truncating the 
chain if necessary. We choose a point $w_j\in \tau B_{v_j} \bigcap \tau B_{v_{j+1}}$. Then, by the triangle inequality, 
\[
\theta(x_j,x_{j+1})\leq \theta(x_j,w_j) + \theta(x_{j+1},w_j). 
\] 
Summing up over all $j$ and using the triangle inequality again, we have 
\begin{equation}\label{eq:triangle-sum}
\theta(x_1,x_N)\leq \sum_{j=1}^{N-1}\theta(x_j,x_{j+1}) \leq \sum_{j=1}^{N-1}\theta(x_j,w_j)+\sum_{j=1}^{N-1}\theta(x_{j+1},w_j).
\end{equation}
Recalling the uniform perfectness of $(X,d)$, for every $y_0\in B_{v_j}\setminus B(x_j,\alpha^{-(n+k)}/K_d)$ we see from the 
quasisymmetry between $(X,d)$ and $(X,\theta)$ that
\[
\frac{\theta(x_j,w_j)}{\theta(x_j,y_0)}\le \eta\left(\frac{d(x_j,w_j)}{d(x_j,y_0)}\right)\le \eta(K_d\tau),
\]
and so $\theta(x_j,w_j)\le \eta(K_d\tau)\, \theta(x_j,y_0)\le \eta(K_d\tau)\,\diam_\theta(B_{v_j})$. 
A similar argument with $x_j$ replaced by $x_{j+1}$ and $B_{v_j}$ by $B_{v_{j+1}}$ shows that
$\theta(x_{j+1},w_j)\le \eta(K_d\, \tau)\, \diam_\theta(B_{v_{j+1}})$.
Hence by~\eqref{eq:triangle-sum},
\begin{equation}\label{eq:AAA}
\theta(x_1,x_N)\le 2\, \eta(K_d\tau)\, \sum_{j=1}^N\diam_\theta(B_{v_j}).
\end{equation}
On the other hand, as $B_{v_N}$ intersects $X\setminus B(x,2\alpha^{-n})$, we have that either $x_1=x$,
in which case $d(x_1,x_N)\ge 2\alpha^{-n}-\alpha^{-(n+k)}> \alpha^{-n}$, or else $x_1\ne x$, in which case, by the fact that
$B_{v_1}\cap B_v$ is nonempty, we have by the argument from the case of $x_1=x$ that $d(x,x_N)\ge 2\alpha^{-n}-\alpha^{-(n+k)}$, and so
\[
d(x_1,x_N)\ge [2\alpha^{-n}-\alpha^{-(n+k)}]-[\alpha^{-n}+\alpha^{-(n+k)}]\ge \frac12\, \alpha^{-n};
\] 
hence for each $y\in 2B_v$ we have
\[
\frac{\theta(x_1,y)}{\theta(x_1,x_N)}
\le \eta\left(\frac{d(x_1,y)}{d(x_1,x_N)}\right)\le \eta\left(\frac{4\alpha^{-n}}{\tfrac12\alpha^{-n}}\right)
 =\eta(8).
\]
In the above, we also used the fact that as $B_{v_1}$ intersected $B_v$, necessarily 
$B_{v_1}\subset 2B_v$ and so $d(x_1,y)\le 4\alpha^{-n}$.
It follows that $\theta(x_1,y) \le \eta(8)\, \theta(x_1,x_N)$. 
Therefore by~\eqref{eq:AAA},
\[
\theta(x_1,y)\le 2\, \eta(8)\, \eta(K_d\tau)\, \sum_{j=1}^N\diam_\theta(B_{v_j}).
\]
As $B_{v_1}$ intersects $B_v$, necessarily $x_1\in 2B_v$, and so by triangle inequality, we must have 
\[
\frac12\, \diam_\theta(2B_v) \le \sup_{y\in 2B_v}\theta(x_1,y).
\]
Therefore we have the desired estimate for $\diam_\theta(2B_v)$, namely,
\[
\diam_\theta(2B_v)\le 4\, \eta(8)\, \eta(K_d\tau)\, \sum_{j=1}^N\diam_\theta(B_{v_j}).
\]

From the above-obtained estimate for $\diam_\theta(2B_v)$ it follows that
\[
1\le 4\eta(8)\, \eta(K_d\tau) \, \sum_{j=1}^N\frac{\diam_\theta(B_{v_j})}{\diam_\theta(2B_v)}
=4\eta(8)\, \eta(K_d\tau) \, \sum_{j=1}^N\rho^\prime(v_j).
\]
Hence, setting $\sigma=4\eta(8)\, \eta(K_d\tau)\rho^\prime$ on $V_{n+k}$ and then
extending it by zero to all the other vertices in $G$, we obtain a function that is admissible for computing 
the combinatorial modulus of the family $\Gamma_k(B_v)$.
Now, note that $\sigma$ is supported in $P_k(v):=\{v'\in V_{n+k}\, :\, B_{v'}\cap\overline{2B_v}\ne \emptyset\}$. Therefore
\begin{align*}
\sum_{v'\in V_{n+k}}\sigma(v')^p
&=\sum_{v'\in P_k(v)}\sigma(v')^p\\
&=(4\eta(8)\, \eta(K_d\tau))^p\,\sum_{v'\in P_k(v)}\left(\frac{\diam_\theta(B_{v'})}{\diam_\theta(2B_v)}\right)^p\\
&\le (4\eta(8)\, \eta(K_d\tau))^p\,\sum_{v'\in P_k(v)}\left(\frac{\diam_\theta(B_{v'})}{\diam_\theta(2B_v)}\right)^q\, 
\left(2\eta(K_d)\, \eta(3K_d)\, \eta(K_d^2\, \alpha^{-k})\right)^{p-q},
\end{align*}
where we used Lemma~\ref{lem:theta-diam-comparison} in the last step above.

We need an upper bound for the ratio $\diam_\theta(4B_v)/\diam_\theta(2B_v)$. To do this, we choose $y\in 2B_v\setminus \tfrac{1}{K_d}B_v$, and 
$z\in 4B_v$ such that $\theta(z,x)\ge \tfrac12 \diam_\theta(4B_v)$. Then we have
\[
\frac{\diam_\theta(4B_v)}{\diam_\theta(2B_v)}\le \frac{2\, \theta(z,x)}{\theta(y,x)}\le 2\, \eta\left(\frac{4\alpha^{-n}}{\alpha^{-n}/K_d}\right)
    \le 2\, \eta(4K_d).
\]
Note that if $v'\in P_k(v)$, then 
$B_{v'}\cap \overline{2B_v}$ is non-empty, and so $B_{v'}\subset 4B_v$.
Now by the Ahlfors $q$-regularity of $(X,\theta)$ together with the bounded overlap of
	the balls $B_{v'}$ for $v'\in P_k(v)$ -- with the overlap number independent of $v$ and $k$ -- we have
\[
\sum_{v'\in P_k(v)}\left(\frac{\diam_\theta(B_{v'})}{\diam_\theta(2B_v)}\right)^q
=\frac{\sum_{v'\in P_k(v)}\diam_\theta(B_{v'})^q}{\diam_\theta(4B_v)^q}\, \frac{\diam_\theta(4B_v)^q}{\diam_\theta(2B_v)^q}\lesssim 1,
\]
and so 
\[
\sum_{v'\in V_{n+k}}\sigma(v')^p\lesssim  
\left(\eta(K_d^2\, \alpha^{-k})\right)^{p-q}.
\]
It follows that
\[
\Mod_p(\Gamma_k(B_v))\lesssim \left(\eta(K_d^2\, \alpha^{-k})\right)^{p-q}.
\]
Now, for each $k\in\N$, taking the supremum over all $v\in V$, we see that
\[
M_p(k)\lesssim \left(\eta(K_d^2\, \alpha^{-k})\right)^{p-q}.
\]
As $\lim_{t\to 0^+}\eta(t)=0$, it follows that $\lim_{k\to\infty}M_p(k)=0$, that is, $M_p(X,d)=0$.
\end{proof}

\section{Proof of Theorem~\ref{thm:main}; equality}\label{Sec:4}

In the remainder of this note, we fix $p\in(0,\infty)$ such that $M_p(X,d)=0$. Our goal is to find a metric $\theta$ in the Ahlfors regular quasisymmetric gauge $\mathcal{G}(X,d)$
	of $(X,d)$ such that $(X,\theta)$ is Ahlfors $p$-regular, thus proving that $p$ is an upper bound for the Ahlfors regular conformal dimension of  $(X,d)$ and completing the proof of Theorem~\ref{thm:main}.
	
We now introduce two structural constants.   By the metric doubling property of $(X,d)$, there is a positive integer $N_1=N^6$ 
so that for each $r>0$ and $x\in X$,
	there is at most $N_1$ number of points in any subset of $B(x,6r)$ whose elements are mutually $r/2$-separated.  There is an integer $N_2$ 
such that $N_2$ is an upper bound on the number of horizontal neighbors that each vertex in $V$ has. Indeed, $N_2$ depends only on $\tau$ and the constant associated with the metric doubling of $(X,d)$.
 We fix $\eps >0$ such that
\begin{equation}\label{eq:eps-bound}
	2^{p+2}(N_2+N_1+1)^2\, \eps<1.
\end{equation}
We then fix $\eps_0>0$ so that $N_2^2\eps_0<\eps$, and we choose $n_0$ as in Remark~\ref{rem:setup-n-naught}; 
note that $N_2$ does not depend on the choice of $n_0$, and so this is not a circular condition.

To find such a metric $\theta$, 
we consider the hyperbolic filling graph $G[n_0]$ 
as described in subsection~\ref{subsec:n-naught},  
and a density function $\rho$ on vertex set $V[n_0]$ of $G[n_0]$ that satisfy the hypotheses of Theorem~\ref{thm:Car1.1}. The verification of the hypotheses of Theorem~\ref{thm:Car1.1} will be done once the density function is constructed using a bootstrapping process, see subsection~\ref{Sub:rho}.

Observe that the choice of $n_0 \in \N$ is determined by the choice of $\eps_0 \in (0,1)$, see Remark~\ref{rem:setup-n-naught}, but
we can choose it to be as large as we like. Recall that we require $\tau >6$. We choose $n_0$ to be large enough so that 
\begin{equation}\label{n0-condition}
6+4\alpha^{-n_0}+8\tau\alpha^{-n_0}<\tau<\frac{1}{4}\, \alpha^{n_0}.
\end{equation}
With the above constraints for $n_0$ in mind, 
the graph structure of $G[n_0]$ is commensurate with considering $A_{n_0k}$, $k\in\N$, and with root $v_0$ which is also the root of $G$, so that neighborhood relations are obtained by using the \textit{same value of $\tau$} used in the construction of $G$, but now $\alpha$ is replaced by $\alpha^{n_0}$.

\subsection{An inductive construction of a weight on $G[n_0]$} \label{Sub:G-notation}
Recall from subsection~\ref{subsec:n-naught} that
\begin{equation}\label{eq:V-n0}
V[n_0]=\bigcup_{k\in\{0\}\cup\N} A_{n_0k}\times\{k\}, \text{ with }A_0=\{x_0\}.
\end{equation}
For non-negative integers $k$ we set $V[n_0]_k$ to be the collection of all vertices $v\in A_{n_0k}\times\{k\}$, 
so that $V[n_0]=\bigcup_{k=0}^\infty V[n_0]_k$. Moreover, for each $v\in G[n_0]$ we denote by $x_v$ and $n_v$
the unique elements $x_v\in X$ and $n_v$ a non-negative integer such that $v=(x_v,n_v)$.

We need
the following lemma, which will help us inductively construct the function $\pi_0$ that is intermediary towards
constructing the weight $\pi$ that satisfies the hypotheses of
Theorem~\ref{thm:Car1.1}.

\begin{lem}\label{lem:inductive}
Let $k$ be a positive integer and suppose that there is a constant $K>1$ and two maps
$\pi_0:V[n_0]_k\to(0,\infty)$ and $\pi_1:V[n_0]_{k+1}\to(0,\infty)$ satisfying the following hypotheses:
\begin{itemize}
\item If $w,w'\in V[n_0]_k$ with $w\sim w'$, then 
\begin{equation}\label{eq:Pi-0}
\frac{1}{K}\le \frac{\pi_0(w)}{\pi_0(w')}\le K.
\end{equation}
\item If $v\in V[n_0]_{k+1}$, then there is a vertex $w\in V[n_0]_{k}$ such that $v\sim w$ and
\begin{equation}\label{eq:Pi-01}
1\le \frac{\pi_0(w)}{\pi_1(v)}\le K.
\end{equation}
\end{itemize}
Then there is a map $\pi_0:V[n_0]_{k+1}\to (0,\infty)$ 
such that if $v,v'\in V[n_0]_{k+1}$ with $v\sim v'$, then 
\begin{equation}\label{eq:Pi-0new}
\frac{1}{K}\le \frac{\pi_0(v)}{\pi_0(v')}\le K,
\end{equation}
and moreover, for each $v\in V[n_0]_{k+1}$, we have 
\begin{equation}\label{eq:sum}
\pi_0(v)=\pi_1(v)\ \text{ or, }\ \pi_0(v)=\max\{\pi_1(v')/K\, :\, v'\in V[n_0]_{k+1},\ v'\sim v\}>\pi_1(v).
\end{equation}
Moreover, we also have that if $v\in V[n_0]_{k+1}$, there is some $w\in V[n_0]_{k}$ such that $w\sim v$ and 
\begin{equation}\label{eq:w-parent-child}
1\le \frac{\pi_0(w)}{\pi_0(v)}\le K.
\end{equation}
\end{lem}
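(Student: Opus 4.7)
The plan is to take the most natural one-step smoothing: define
\[
\pi_0(v) \coloneq \max\Bigl\{\pi_1(v),\ \max_{v' \sim v}\ \pi_1(v')/K\Bigr\}
\]
for each $v \in V[n_0]_{k+1}$, with the inner maximum over horizontal neighbors $v' \in V[n_0]_{k+1}$ of $v$. With this definition the form condition \eqref{eq:sum} is automatic: the max is attained either at $\pi_1(v)$, giving the first alternative, or at some $\pi_1(v')/K > \pi_1(v)$, giving the second. Everything then comes down to verifying the horizontal condition \eqref{eq:Pi-0new} and the parent condition \eqref{eq:w-parent-child}.

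The crucial preliminary I would record is a geometric fact: if $a, b \in V[n_0]_{k+1}$ satisfy $d(x_a, x_b) \le 4\tau\alpha^{-n_0(k+1)}$, then any parents $w_a, w_b \in V[n_0]_k$ of $a, b$ supplied by the vertical neighbor relation are equal or horizontal neighbors in $V[n_0]_k$. A direct triangle-inequality computation gives $d(x_{w_a}, x_{w_b}) \le 2\alpha^{-n_0 k}\bigl(1 + (1 + 2\tau)\alpha^{-n_0}\bigr)$, which is strictly less than the horizontal threshold $2\tau\alpha^{-n_0 k}$ by the constraint \eqref{n0-condition}. This applies both when $a \sim b$ and when $a \sim v \sim b$ for an intermediate vertex $v$, and it is the device that lets horizontal control at level $k$ propagate to level $k+1$ without loss of constants.

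To verify \eqref{eq:Pi-0new} for $v \sim v'$ in $V[n_0]_{k+1}$, I split by how $\pi_0(v)$ is realized. If $\pi_0(v) = \pi_1(v)$, then since $v$ is itself a neighbor of $v'$, the term $\pi_1(v)/K$ appears in the max defining $\pi_0(v')$, so $\pi_0(v') \ge \pi_1(v)/K$ and the ratio is bounded by $K$. Otherwise $\pi_0(v) = \pi_1(u)/K$ for some $u \sim v$; selecting parents $w_u, w_{v'}$ from \eqref{eq:Pi-01} and applying the geometric fact to the two-step chain $u \sim v \sim v'$, the parents $w_u, w_{v'}$ become equal or horizontal neighbors in $V[n_0]_k$, so \eqref{eq:Pi-0} supplies $\pi_0(w_u) \le K\pi_0(w_{v'})$. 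Telescoping then gives
\[
\pi_0(v) = \pi_1(u)/K \le \pi_0(w_u)/K \le \pi_0(w_{v'}) \le K\pi_1(v') \le K\pi_0(v').
\]

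For \eqref{eq:w-parent-child} I would always choose $w \coloneq w_v$, the parent of $v$ provided by \eqref{eq:Pi-01}. If $\pi_0(v) = \pi_1(v)$ the bound is just \eqref{eq:Pi-01}. If instead $\pi_0(v) = \pi_1(u)/K$ for some $u \sim v$, the one-step version of the geometric fact makes $w_u, w_v$ equal or horizontal neighbors, so \eqref{eq:Pi-0} and $\pi_1(u) \le \pi_0(w_u)$ yield $\pi_0(v) = \pi_1(u)/K \le \pi_0(w_u)/K \le \pi_0(w_v)$; the reverse inequality $\pi_0(w_v) \le K\pi_0(v)$ follows at once from $\pi_0(v) > \pi_1(v) \ge \pi_0(w_v)/K$. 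The only point that requires real thought is the quantitative geometric fact linking horizontal neighbors at level $k+1$ to horizontal neighbors at level $k$ through the parent relation; this is where the condition \eqref{n0-condition} is used in an essential way, and once it is established the remaining verifications are short case analyses funneling all information back through parents into the inductive estimate \eqref{eq:Pi-0}.
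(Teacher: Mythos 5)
Your proposal is correct, and your weight is in fact literally the same function the paper constructs: the paper phrases the definition via an orientation on horizontal edges (orienting $v\sim v'$ from $v$ to $v'$ when $\pi_1(v)>K\pi_1(v')$ and setting $\pi_0(v)=\tfrac1K\max\{\pi_1(v'):v'\sim v\}$ at vertices with an incoming edge), which coincides with your $\pi_0(v)=\max\{\pi_1(v),\max_{v'\sim v}\pi_1(v')/K\}$, so \eqref{eq:sum} holds in both treatments for the same reason. The key ingredient is also shared: your ``geometric fact'' that vertices of $V[n_0]_{k+1}$ within one or two horizontal steps have parents that are equal or horizontal neighbors in $V[n_0]_k$ is exactly the paper's use of \eqref{n0-condition} in \eqref{eq:nbr-parents} and in its claim about directed edges, and both arguments then funnel everything through \eqref{eq:Pi-0} and \eqref{eq:Pi-01} at level $k$. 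Where you genuinely diverge is the organization of the verification of \eqref{eq:Pi-0new}: the paper first proves a structural claim (no vertex both emits and receives a directed edge) and then runs a three-case analysis (a)--(c), whereas you bypass the orientation machinery entirely and prove the single one-sided estimate $\pi_0(v)\le K\pi_0(v')$ for an arbitrary ordered pair of neighbors, via the telescoping chain through $\pi_0(w_u)$ and $\pi_0(w_{v'})$; the two-sided bound then follows by applying this to $(v',v)$ as well --- it would be worth saying that symmetry step explicitly, since as written you only record upper bounds. Your treatment of \eqref{eq:w-parent-child} with $w=w_v$ matches the paper's, including the observation that $\pi_0(w_v)\le K\pi_1(v)\le K\pi_0(v)$ handles the cheap direction. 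The net effect of your route is a shorter, more uniform case analysis at the cost of nothing; the paper's orientation formulation makes the monotone structure of the modification more visible but is not needed for the estimates.
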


\begin{proof} 
Let $v,v'\in V[n_0]_{k+1}$ such that $v\sim v'$.
We first note that if $w,w'\in V[n_0]_k$ are associated with $v,v'\in V[n_0]_{k+1}$ such that $w\sim v$, and $w'\sim v'$
as in the second hypothesis of the lemma, then 
either $w=w'$ or $w\sim w'$ and by~\eqref{eq:Pi-0} and~\eqref{eq:Pi-01},
\begin{equation}\label{eq:nbr-parents}
\frac{1}{K^2}=\frac{1}{K}\, \frac{1}{K}\le \frac{\pi_1(v)}{\pi_1(v')}
=\frac{\pi_1(v)/\pi_0(w)}{\pi_1(v')/\pi_0(w')}\, \frac{\pi_0(w)}{\pi_0(w')}\le \frac{K}{1}\, K=K^2
\end{equation}

At the level $V[n_0]_{k+1}$ there are only finitely many (horizontal) edges; we now impose an orientation on some of  
these edges. If $v\sim v'$ is such an edge so that
$K^{-1}\le \tfrac{\pi_1(v)}{\pi_1(v')}\le K$, then we keep the edge $v\sim v'$ unoriented. If this double inequality fails, then
either $\tfrac{\pi_1(v)}{\pi_1(v')}>K$ or else $\tfrac{\pi_1(v)}{\pi_1(v')}<K^{-1}$, but not both. If 
$\tfrac{\pi_1(v)}{\pi_1(v')}>K$, then we orient the edge $v\sim v'$ so that the orientation is from $v$ to $v'$. If
$\tfrac{\pi_1(v)}{\pi_1(v')}<K^{-1}$, then we orient the edge from $v'$ to $v$.

We now claim that there are no three vertices $v,v',\widehat{v}\in V[n_0]_{k+1}$ with $v'\sim v\sim \widehat{v}$ such that 
$v'\sim v$ is oriented from $v'$ to $v$ and the edge $v\sim \widehat{v}$ is oriented from $v$ to $\widehat{v}$. Indeed, if 
there were such edges, then we must have 
\[
\pi_1(v')>K\, \pi_1(v)>K^2\, \pi_1(\widehat{v}).
\]
In this case, with $w',\widehat{w}\in V[n_0]_k$ associated with $v',\widehat{v}$ satisfying the condition given in~\eqref{eq:Pi-01},
we have that 
\[
\pi_0(\widehat{w})\le K\, \pi_1(\widehat{v})< \frac{\pi_1(v')}{K^2}\le \frac{\pi_0(w')}{K^2},
\]
and so we must have
\[
\frac{\pi_0(\widehat{w})}{\pi_0(w')}<\frac{1}{K^2}.
\]
On the other hand, by~\eqref{n0-condition} we have $1+\alpha^{-n_0}+8\tau\alpha^{-n_0}<\tau$, and $v'\sim v\sim \widehat{v}$, 
and hence we have that $\widehat{w}\sim w'$, and so the above inequality violates the hypothesis~\eqref{eq:Pi-0}. Thus our claim holds true.

From the above argument, it follows that if $v\in V[n_0]_{k+1}$ has a horizontal edge oriented towards $v$, then all oriented
horizontal edges with $v$ as an endpoint must be oriented towards $v$. If $v$ has no horizontal edge directed towards $v$,
then set $\pi_0(v):=\pi_1(v)$. If $v$ belongs to a directed horizontal edge with orientation towards $v$, then we set
\[
\pi_0(v):=\frac{1}{K}\, \max\{\pi_1(v')\, :\, v'\in V[n_0]_{k+1}\text{ with }v'\sim v\}.
\]
With this choice of map $\pi_0:V[n_0]_{k+1}\to(0,\infty)$, we immediately see the validity of the claim~\eqref{eq:sum}; thus we 
only need to verify~\eqref{eq:Pi-0new} when $v,v'\in V[n_0]_{k+1}$ with $v\sim v'$. Let $v$ and $v'$ be such edges. Then
there are three possibilities: {\bf (a):} $\pi_0(v)=\pi_1(v)$ and $\pi_0(v')=\pi_1(v')$, or {\bf (b):} $\pi_0(v)=\pi_1(v)$ and
$\pi_0(v')\ne \pi_1(v')$, or else $\pi_0(v)\ne \pi_1(v)$ and $\pi_0(v')=\pi_1(v')$, and {\bf (c):} $\pi_0(v)\ne \pi_1(v)$ and
$\pi_0(v')\ne \pi_1(v')$.
We address these three cases separately to complete the proof.

\noindent {\bf Case~(a):} In this case, we know that the edge $v\sim v'$ is not a directed edge (for if it is, then at least
one of $\pi_0(v)=\pi_1(v)$ and $\pi_0(v')=\pi_1(v')$ would be false), and so
we have 
\[
\frac{1}{K}\le \frac{\pi_0(v)}{\pi_0(v')}=\frac{\pi_1(v)}{\pi_1(v')}\le  K.
\]

\noindent {\bf Case~(b):} We will treat the case $\pi_0(v)=\pi_1(v)$ and
$\pi_0(v')\ne \pi_1(v')$, the other case following mutatis mutandis. Since $\pi_0(v)=\pi_1(v)$, 
no horizontal edge, with $v$ as a vertex, is directed towards $v$. In particular, the vertex $v\sim v'$ is not directed
towards $v$, and so $K\, \pi_0(v)=K\, \pi_1(v)\ge \pi_1(v')$. It follows that there is a vertex, $\widehat{v'}\in V[n_0]_{k+1}$
(allowing for the possibility that $\widehat{v'}=v$)
with $v'\sim\widehat{v'}$ so that $\pi_0(v')=\pi_1(\widehat{v'})/K$. Therefore
\[
\frac{\pi_0(v)}{\pi_0(v')}=K\, \frac{\pi_1(v)}{\pi_1(\widehat{v'})}
\]
 In this case, with $w,\widehat{w'}\in V[n_0]_k$ such that $v\sim w$ and
$\widehat{v'}\sim \widehat{w'}$, by the assumption~\eqref{n0-condition} on $n_0$, we see that
$w\sim\widehat{w'}$ or $w=\widehat{w'}$, and so from~\eqref{eq:nbr-parents}, we have
\[
\frac{\pi_0(v)}{\pi_0(v')}=\frac{\pi_1(v)}{\pi_1(\widehat{v'})/K}=K\, \frac{\pi_1(v)}{\pi_1(\widehat{v'})}\ge K\, \frac{1}{K^2}=\frac{1}{K}.
\]
On the other hand, by the definition of $\pi_0(v')$, we have that $\pi_1(\widehat{v})\ge \pi_1(v)$. It follows that
\[
\frac{\pi_0(v)}{\pi_0(v')}=K\, \frac{\pi_1(v)}{\pi_1(\widehat{v'})}\le K.
\]
Combining the above two sets of inequalities, in {\bf Case~(b)} also we have that
\[
\frac{1}{K}\le \frac{\pi_0(v)}{\pi_0(v')}\le K.
\]

\noindent {\bf Case~(c):} In this case both $v$ and $v'$ have oriented edges directed towards them; this means that
the edge $v\sim v'$ is not a directed edge. Therefore $K^{-1}\, \pi_1(v')\le \pi_1(v)\le K\, \pi_1(v')$. We can now find
$\widehat{v},\widehat{v'}\in V[n_0]_{k+1}$ and $w,\widehat{w},\widehat{w'}\in V[n_0]_k$ such that
$v\sim \widehat{v}$, $v'\sim \widehat{v'}$, and $w\sim v$,
$\widehat{w}\sim\widehat{v}$, $\widehat{w'}\sim \widehat{v'}$,
and so that $\pi_0(v)=\pi_1(\widehat{v})/K$, $\pi_0(v')=\pi_1(\widehat{v'})/K$. 
By the assumption~\eqref{n0-condition} on $n_0$,
we also have that $w\sim\widehat{w'}$ or $w=\widehat{w'}$, and so by a similar argument to Case~(b) we have that
\[
\frac{\pi_0(v)}{\pi_0(v')}\ge \frac{\pi_1(v)}{\pi_0(v')}=K\, \frac{\pi_1(v)}{\pi_1(\widehat{v'})}\ge K\, \frac{1}{K^2}=\frac{1}{K}.
\]
Reversing the roles of $v$ and $v'$ in the above, by the symmetry between $v$ and $v'$ in this case, we also obtain
\[
\frac{\pi_0(v)}{\pi_0(v')}\le K.
\]

The three cases above complete the proof of the lemma except for~\eqref{eq:w-parent-child}, which we now verify.
Let $v\in V[n_0]_{k+1}$ and $w\in V[n_0]_k$ associated with $v$ as in the condition~\eqref{eq:Pi-01}. 
If $\pi_0(v)=\pi_1(v)$, then
by~\eqref{eq:Pi-01} we have
\[
1\le \frac{\pi_0(w)}{\pi_0(v)}=\frac{\pi_0(w)}{\pi_1(v)}\le K.
\]
If $\pi_0(v)\ne \pi_1(v)$, then $\pi_1(v)\le \pi_0(v)$ and
there is some $v'\in V[n_0]_{k+1}$ such that $\pi_0(v)=\pi_1(v')/K$; in this 
case, let $w'\in V[n_0]_k$ such that $w'\sim v'$ be associated to $v'$ as in~\eqref{eq:Pi-01}; then by~\eqref{n0-condition} 
we know that $w\sim w'$, and so
by~\eqref{eq:Pi-0} and~\eqref{eq:Pi-01} we see that
\[
K\ge \frac{\pi_0(w)}{\pi_1(v)}\ge 
\frac{\pi_0(w)}{\pi_0(v)}=K\, \frac{\pi_0(w)}{\pi_1(v')}
=K\, \frac{\pi_0(w)}{\pi_0(w')}\, \frac{\pi_0(w')}{\pi_1(v')}\ge K\, \frac{1}{K}=1.
\]
This now completes the proof.
\end{proof}

Now we are ready to construct the weight $\pi$ and the associated function $\rho$ on $G[n_0]$.

\subsection{Constructing $\sigma:V[n_0]\to [0,\infty)$, the first candidate for the weight.}\label{Sub:sigma}

Recall from the beginning of this section that for each $v\in G[n_0]$, $x_v$ and $n_v$ denote
the unique element $x_v\in X$ and the unique non-negative integer $n_v$ such that $v=(x_v,n_v)$.

For $v\in V[n_0]$, we set 
$\Gamma[n_0]_1(B_v)$ (with $v=(x_v,n_v)$ and $B_v=B(x_v,\alpha^{-n_0 n_v})$) to be the collection of all paths
$v_1\sim v_2\sim\cdots\sim v_m$ in $G[n_0]$ so that the generation of each $v_i$ with respect to the graph $G[n_0]$
is $n_v+1$ and so that $B_{v_1}\cap B_v\ne \emptyset$ and $B_{v_m}\setminus 2B_v\ne\emptyset$. 
Without loss of generality we restrict our attention to curves that also satisfy
$B_{v_j}\setminus 2B_v\ne\emptyset$ if $2\le j\le m-1$. By the choice of $n_0$, the combinatorial modulus of 
$\Gamma[n_0]_1(B_v)$ satisfies
\[
\Mod_p(\Gamma[n_0]_1(B_v))<\eps_0,
\]
and so there is a function $\sigma_v:V[n_0]\to[0,\infty)$ such that
\begin{enumerate}
\item $\sigma_v(w)=0$ whenever $w\in V[n_0]$ that is of generation different from $n_v+1$ or $\tau B_w$ is not a subset of
$3B_v$ (see below),
\item $\sigma_v(w)=0$ whenever $w\in V[n_0]$ is of generation $n_v+1$ but $\tau B_w\subset B_v$,
\item \label{S1} $\sum_{j=1}^m\sigma_v(v_j)\ge 1$ whenever $v_1\sim v_2\sim\cdots\sim v_m$ is a path belonging to the family 
$\Gamma[n_0]_1(B_v)$,
\item \label{S2} $\sum_{w\in V[n_0]}\sigma_v(w)^p<\eps_0$.
\end{enumerate}
Note that if $w\in V[n_0]$ is of generation $n_v+1$ such that $\tau B_w\subset B_v$, then with $v=(x_v,n_v)$ and $w=(x_w,n_v+1)$,
we have that $d(x_v,x_w)\le \alpha^{-n_0n_v}\, [1-\tau\alpha^{-n_0}]$. Moreover, by the choice of $n_0$ as
in~\eqref{n0-condition}, 
we have that $\tau B_{(x_v,n_v+1)}\subset B_v$. 

Observe that if $(x_w,n_v+1)=w\in V[n_0]$ such that $B_w\cap 2B_v$ is nonempty, then by the largeness of $n_0$ we have
that $\tau B_w\subset 3B_v$.

We now define a function $\sigma:V[n_0]\to[0,\infty)$ by
\[
\sigma(w):=\max\{\sigma_v(w)\, :\, v\in V[n_0]\}.
\]
While, on the face of it, the above maximum appears to be over
an infinite set, given the conditions on $\sigma_v$ listed above we have that
with $w\in V[n_0]$ fixed, the only vertices $v\in V[n_0]$ for which $\sigma_v(w)>0$ are those vertices that are of the previous
generation to the generation of $w$ and so that necessarily $B_w\subset 3B_v$; there are only finitely many such $v$. Indeed,
if $v_1, v_2\in V[n_0]$ are of the previous generation $n_w-1$ such that $B_w\subset 3B_{v_1}\cap 3B_{v_2}$, then 
$d(x_{v_1}, x_w)<3\alpha^{n_0(n_w-1)}$ and $d(x_{v_2}, x_w)<3\alpha^{n_0(n_w-1)}$, and so
$d(x_{v_2},x_{v_1})<6\alpha^{n_0(n_w-1)}$, and thus $v_2\in A_{n_0(n_w-1)}\cap B(x_{v_1},6\alpha^{-n_0(n_w-1)})$, and
there is at most $N_1$ number of such $v_2$. Note that $N_1$ is independent of $n_0$ and $n_w$ and depends solely on
the doubling constant of $(X,d)$. Note that $N_1$ and $N_2$ were defined at the beginning of Section~\ref{Sec:4}.

Then, for each $v\in V[n_0]$ and path $v_1\sim v_2\sim\cdots\sim v_m$ belonging to the family 
$\Gamma[n_0]_1(B_v)$, necessarily $\sum_{j=1}^m\sigma(v_j)\ge \sum_{j=1}^m\sigma_v(v_j)\ge 1$.

\begin{defn}\label{def:Tv}
For each $v\in V[n_0]$, we set $T[n_0](v)$ to be the collection of all vertices 
$(x_w,n_v+1)=w\in V[n_0]$ such that $B_w\subset 6B_v$. 
\end{defn}

We have
\[
\sum_{w\in T[n_0](v)}\, \sigma(w)^p<\,N_2\,\eps_0<\eps.
\]
The factor of $N_2$ is needed above because a vertex $w\in T[n_0](v)$ may show up as part of a path in more than
one family $\Gamma[n_0]_1(B_{v'})$, but all such $v'$ would necessarily be neighbors of $v$ and be of the same generation
as $v$.

While $\sigma$ might sound like a viable candidate to apply Theorem~\ref{thm:Car1.1} to, we do not have good control
over how wildly the value of $\sigma$ might fluctuate between neighboring vertices. For this reason, we need to 
dampen the oscillations of $\sigma$; we do so by two successive procedures, addressed in the next subsection.

\subsection{Improvements from $\sigma$ to $\mu_1$ then to $\mu_2$}\label{Sub:mu2}

Our goal over the next couple of subsections is to 
construct a function $\rho$ that will satisfy properties listed in Theorem~\ref{thm:Car1.1} from the function $\sigma$ defined 
at the end of subsection~\ref{Sub:sigma}. This construction is done in several steps. In this subsection we modify $\sigma$
to obtain another function $\mu_1$, and from $\mu_1$ we obtain the second modification $\mu_2$.

By the fact that $X$ has metric doubling property, there is a positive integer $M$ such that for each vertex $v\in G[n_0]$,
the cardinality of $T[n_0](v)$ is at most $M$.
To see this, let $v\in V[n_0]$ and $v' \in T[n_0](v)$. Since the balls $\tfrac12B_{v'}$ are disjoint and our original space $(X,d)$ is 
metrically doubling, there is some constant $M$ which bounds the number of such balls that might lie inside an 
enlarged ball $3B_v$. This constant necessarily bounds the number of children that $v$ has, since all the children 
of $v$ belong to $T[n_0](v)$.

Now, we take $\eta_- = \left(\frac{\eps}{M}\right)^{1/p}$ and define 
\begin{equation} 
    \mu_1 = (\sigma^p + \eta_-^p)^\frac{1}{p}.
\end{equation}

Defining $\mu_1$ in this way guarantees that $\mu_1 \geq \eta_->0$ and that $\mu_1 \geq \sigma$. 
We have, 
\[
\sum_{v'\in T[n_0](v)}\mu_1(v')^p \leq \sum_{v'\in T[n_0](v)}\sigma^p + \sum_{v'\in T[n_0](v)}\eta_-^p.
\]
Since $\sigma$ satisfies Condition~\eqref{S2} from subsection~\ref{Sub:sigma}, the first term is less than $\eps$. Thus, by
the choice of $\eta_-$ we have
\begin{equation}\label{tag-1}
\sum_{v'\in T[n_0](v)}\mu_1(v')^p<2\eps.
\end{equation}

We now modify $\mu_1$ further to $\mu_2$. To do so, for $(x_w,n_w)=w\in V[n_0]$ we set the \emph{sibling set} $S(w)$ of $w$ to be
\begin{equation}\label{eq:sibling}
S(w):=\{w'\in V[n_0]_{n_w}\, :\, \text{ either }w' \sim w\text{ or }\exists u\in V[n_0]_{n_w}\text{ with }w'\sim u\sim w\}.
\end{equation}
We also set the \emph{immediate sibling set} $SI(w)$ to be 
\begin{equation}\label{eq:immediate-sibling}
SI(w):=\{w'\in V[n_0]_{n_w}\, :\, \text{ either }w' \sim w\text{ or }w'=w\}.
\end{equation}
Note that $SI(w)\subset S(w)$.
Now we set
\begin{equation} \label{eq:mu2}
    \mu_2(v) = 2\max\{\mu_1(v'): v'\in S(v)\},
\end{equation}
When $v\in V[n_0]_k$ at level $n_v=k$, and $\gamma = \{v_i\}_{i=1}^{N}$
is a curve from the family $\Gamma[n_0]_1(B_v)$,
we want to show that 
\begin{equation}\label{eq:H3'-mu1}
\sum_{j=1}^{N-1} \min\bigg\lbrace\mu_2(w)\, :\, w\in V[n_0]_{k+1},\,  w\sim v_j\text{ or }w\sim v_{j+1}\bigg\rbrace\ \ge 1.
\end{equation}
Note that for each $w_0\in V[n_0]_{k+1}$, we have that $\mu_2(w_0)\ge \mu_1(w)\ge \sigma(w)$ whenever 
$w\in V[n_0]_{k+1}$ with $w\sim w_0$, 
it follows that for each $v_j$ in the path $\gamma$, the corresponding minimum for $v_j$ in the above sum is at 
least $\mu_1(v_j)\ge \sigma(v_j)$.
And so by Condition~\eqref{S1} of subsection~\ref{Sub:sigma}, the above inequality follows. (This condition,~\eqref{eq:H3'-mu1},
is referred to in~\cite{Car} as (H$3'$) for $\mu_2$.)

Note also that for $v\in V[n_0]_k$, 
\begin{align*}
\sum_{v'\in T[n_0](v)}\mu_2(v')^p\le 2^p\sum_{v'\in T[n_0](v)}\, \sum_{w\in S(v')}\mu_1(w)^p
  &\le 2^p \sum_{\widehat{v}\in SI(v)}\ \sum_{w\in T[n_0](\widehat{v})}\mu_1(w)^p\\
  &\le 2^p\sum_{\widehat{v}\in SI(v)}\, 2\eps,
\end{align*}
where we used~\eqref{tag-1} in the last inequality. As the number of neighbors of $v$ in $V[n_0]_k$ is at most $N_2$, we have that
\begin{equation}\label{eq:etaPlus}
\sum_{v'\in T[n_0](v)}\mu_2(v')^p\, \le \, 2^{p+1}\eps\, (N_2+1).
\end{equation}
By our choice of $\eps$ in~\eqref{eq:eps-bound}, we have that $2^{p+1}\eps (N_2+1) < 2^{p+2}\eps (N_2+1)^2\ <1$.

While $\mu_2$ has a better behavior in comparison between neighbors, we do not know that the function $\pi$ as
constructed from $\mu_2$ in Theorem~\ref{thm:Car1.1} would satisfy Conditions~(H2--4). For this reason, we need
to modify $\mu_2$ further.

\subsection{Construction of a subgraph $Z[n_0]$}\label{Sub:Z}

Recall that in~\eqref{eq:V-n0} we constructed the graph $G[n_0]$.
Before we proceed further, we construct a subgraph $Z[n_0]$ of $G[n_0]$ by removing some of the vertical edges, as follows.
Considering the same vertex set $V[n_0]$ as for $G[n_0]$, for each vertex $(x_v,n_v)=v\in V[n_0]$ and $w\in V[n_0]_{n_v}$,
we say that the two vertices $v$ and $w$ are neighbors in $Z[n_0]$, denoted $v\sim_Z w$, if $v\sim w$. 
These form the horizontal edges of $Z[n_0]$, that is, all the horizontal edges in $G[n_0]$ are kept
in forming $Z[n_0]$. For $w\in V[n_0]_k$ with $k\ge 1$, we choose exactly one of all the vertices $v\in V[n_0]_{k-1}$ for which
$d(x_w,x_v)\le d(x_w, A_{n_0\, n_v}\setminus\{x_v\})$, and set $v\sim_Z w$. Note that necessarily $v\sim w$ in the graph $G[n_0]$
as well, and that for $x\in A_{n_0k}$, we have that $(x,k)\sim_Z (x,k+1)$.

Observe that $Z[n_0]$ is a subgraph of $G[n_0]$, but with the additional property that if we were to remove all the horizontal edges 
from $Z[n_0]$, then we would obtain a connected tree. 

For $v\in V[n_0]_k$, we define $g(v)_i$, $i \in \{1,...,k\}$ to be the $i-th$ generation 
 ancestor of $v$ in the graph $Z[n_0]$ constructed above. More explicitly, we set $g(v)_0 =v_0$, $g(v)_k = v$, and 
 $g(v)_i$ to be the element $v_i \in V[n_0]_i$ in the unique, strictly vertical path 
 $v_0\sim_Z v_1 \sim_Z \cdots \sim_Z v_{k-1} \sim_Z v$ in $Z[n_0]$ for $i \in \{1,...,k-1\}$.

For each $v\in V[n_0]_k$ we set $D[n_0]_j(v)$ to be the collection of all vertices $w\in V[n_0]_{j+k}$ for which
$v=g(w)_k$. These are the $j$-th descendants of the vertex $v$ in the graph $Z[n_0]$.
Note also that \emph{not all} children of $v$, in the graph $G[n_0]$, are descendants of $v$.

Given a vertex $(x_v,n_v)=v\in V[n_0]$, we call the vertex $(x_v,n_v+1)=w_v$ the \emph{direct descendant} of $v$.  
Note that necessarily $v\sim_Z w_v$. 

\begin{lem}\label{lem:vertical-neigbr-parent}
Suppose that $v,w\in V[n_0]$ with $n_w=n_v+1$ such that $v\sim w$ but $v\not\sim_Z w$. Then $g(w)_{n_v}\sim_Z v$.
\end{lem}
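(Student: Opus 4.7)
The plan is to set $v' := g(w)_{n_v}$, which is the unique $Z[n_0]$-parent of $w$ in the level $V[n_0]_{n_v}$, and show via a single triangle-inequality estimate that $v'$ is a horizontal $G[n_0]$-neighbor of $v$. Since horizontal edges of $G[n_0]$ are, by the definition of $Z[n_0]$, all retained in $Z[n_0]$, this will give $v \sim_Z v'$ as required. The first observation is that $v' \ne v$: indeed, the unique vertex of $V[n_0]_{n_v}$ that is a $Z[n_0]$-parent of $w$ is $v'$, so if we had $v' = v$ we would conclude $v \sim_Z w$, contradicting the hypothesis.

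Next, I would translate the data $v \sim w$ in $G[n_0]$ (with $n_w = n_v + 1$) into the metric inequality coming from the vertical-neighbor definition for $G[n_0]$, namely
\[
d(x_v, x_w) \;<\; \alpha^{-n_0 n_v} + \alpha^{-n_0(n_v+1)} \;=\; \alpha^{-n_0 n_v}(1 + \alpha^{-n_0}).
\]
The key input is the defining property of $g(w)_{n_v}$: by construction, $x_{v'}$ minimizes $d(x_w, \cdot)$ over $A_{n_0 n_v}$, so in particular $d(x_w, x_{v'}) \le d(x_w, x_v)$. A triangle inequality then yields
\[
d(x_v, x_{v'}) \;\le\; d(x_v, x_w) + d(x_w, x_{v'}) \;\le\; 2\, d(x_v, x_w) \;<\; 2(1 + \alpha^{-n_0})\,\alpha^{-n_0 n_v}.
\]

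To finish, I would invoke the size constraints on $\tau$. The standing requirement $\tau > 6$ together with $\alpha^{-n_0} < 1$ (in fact the stronger bound~\eqref{n0-condition} is more than enough) gives $2\tau > 2(1 + \alpha^{-n_0})$, and consequently $d(x_v, x_{v'}) < 2\tau \alpha^{-n_0 n_v}$. This is exactly the condition that $B(x_v, \tau \alpha^{-n_0 n_v})$ and $B(x_{v'}, \tau \alpha^{-n_0 n_v})$ intersect, so $v$ and $v'$ are distinct horizontal neighbors at generation $n_v$ in $G[n_0]$, hence in $Z[n_0]$. I do not anticipate any real obstacle here; the whole argument is a one-line triangle inequality against the generous slack in the choice of $\tau$, and the only subtlety worth flagging is to make explicit from the start that the hypothesis $v \not\sim_Z w$ forces $v' \ne v$ so that a horizontal edge (rather than a degenerate self-loop) is what one is producing.
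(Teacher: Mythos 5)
Your argument is essentially the paper's: a triangle inequality involving $d(x_v,x_w)$ and $d(x_w,x_{g(w)_{n_v}})$, combined with the slack in $\tau$, shows that $v$ and $g(w)_{n_v}$ are horizontal $G[n_0]$-neighbors, and horizontal edges of $G[n_0]$ are by construction retained in $Z[n_0]$. (Your use of the minimizing property $d(x_w,x_{g(w)_{n_v}})\le d(x_w,x_v)$ is a mild variant of the paper's appeal to $g(w)_{n_v}\sim w$; both yield the same numerical bound.)

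The one inaccuracy is in the closing sentence: $d(x_v,x_{v'})<2\tau\alpha^{-n_0 n_v}$ is \emph{necessary} for $B(x_v,\tau\alpha^{-n_0 n_v})$ and $B(x_{v'},\tau\alpha^{-n_0 n_v})$ to intersect, but in a general metric space it is not sufficient (two points at distance strictly between $\tau r$ and $2\tau r$ in a discrete space give disjoint balls of radius $\tau r$). Luckily the estimate you actually derived is much stronger: $d(x_v,x_{v'})<2(1+\alpha^{-n_0})\alpha^{-n_0 n_v}<4\alpha^{-n_0 n_v}<\tau\alpha^{-n_0 n_v}$, using $\alpha^{-n_0}<1$ and $\tau>6$, and this already puts $x_v$ in both balls, which is exactly the bound the paper's proof targets. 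So the conclusion is correct; only the phrasing of the final criterion needs tightening.
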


\begin{proof}
Since $v\sim w$, we know that 
\[
d(x_v,x_w)<\alpha^{-n_0\, n_v}+\alpha^{-n_0\, n_w}=\alpha^{-n_0\, n_w}\, (\alpha+1).
\]
Since $g(w)_{n_v}\sim w$, we also have that $d(x_w,x_{g(w)_{n_v}})<\alpha^{-n_0\, n_w}\, (\alpha+1)$. It follows
that
\[
d(x_v,x_{g(w)_{n_v}})< 2\, \alpha^{-n_0\, n_w}\, (\alpha+1)\le \alpha^{-n_0\, n_v}\, (\alpha^{-1}+1)\le \tau\, \alpha^{-n_0\, n_v},
\]
and hence $v\sim g(w)_{n_v}$. As the horizontal edges of $G[n_0]$ also are horizontal edges of $Z[n_0]$, the claim follows.
\end{proof}

\subsection{Improving $\mu_2$ to $\pip$ via $\pi_0$}\label{Sub:phi}

Now, from $\mu_2$ we seek to construct a function $\pi_0:V[n_0]\to(0,\infty)$ by applying Lemma~\ref{lem:inductive}
inductively as follows. First, we set $\pi_0(v_0) = 1$ where $v_0$ is the root of both
$G[n_0]$ as well as $G$, and we set $\pi_0(v) = \mu_2(v)$ for 
$v\in V[n_0]_1$. 
By the definition of $\mu_2$ from~\eqref{eq:mu2} and by~~\eqref{eq:etaPlus} and~\eqref{eq:eps-bound}, we have 
$\eta_-\le \mu_2(v)\le 2\eps^{1/p}<1$ for all $v\in V[n_0]$.
Therefore, if $v,v'\in V[n_0]_1$ such that $v\sim v'$, then
\[
\eta_-\le \frac{\pi_0(v)}{\pi_0(v')}=\frac{\mu_2(v)}{\mu_2(v')}\le \frac{1}{\eta_-}.
\]
Hence the inequality~\eqref{eq:Pi-0} of
Lemma~\ref{lem:inductive} is satisfied. 

Let $n$ be a positive integer and suppose that $\pi_0$ is defined for all the vertices in $V[n_0]_k$ for $k=0,\cdots, n$.
We define the map $\pi_1:V[n_0]_{n+1}\to(0,\infty)$ by setting $\pi_1(w)=\mu_2(w)\, \pi_0(g(w)_n)$ when $w\in V[n_0]_{n+1}$.
Note that $\pi_1$ and $\pi_0$ together satisfy inequality~\eqref{eq:Pi-01} of Lemma~\ref{lem:inductive},  with $K=1/\eta_-$,
because
\[
1\le \frac{\pi_0(g(w)_n)}{\pi_1(w)}=\frac{1}{\mu_2(w)}\le \frac{1}{\eta_-}=K.
\]
By applying Lemma~\ref{lem:inductive} with $k=n$ we now obtain $\pi_0:V[n_0]_{n+1}\to(0,\infty)$ as well.
Thus inductively we have obtained the function $\pi_0:V[n_0]\to(0,\infty)$ satisfying
inequality~\eqref{eq:Pi-0} of Lemma~\ref{lem:inductive} for each positive integer $k$. As a consequence of this lemma,
for $w\in V[n_0]_k$ with $k\ge 1$, we have that either $\pi_0(w)=\pi_0(g(w)_{k-1})\, \mu_2(w)$, or else
\[
\pi_0(w)=\eta_-\, \max\{\pi_0(g(w')_{k-1})\, \mu_2(w')\, :\, w'\in V[n_0]_k\text{ with }w\sim w'\}>\pi_1(w).
\]
We now set $\pip:V[n_0]\to(0,\infty)$ by $\pip(v_0)=1$ and for $w\in V[n_0]\setminus\{v_0\}$, 
\begin{equation}\label{eq:phi}
\pip(w):=\frac{\pi_0(w)}{\pi_0(g(w)_{k-1})}, \ \ \ \text{ when }w\in A[n_0]_k\ \text{ and }k\ge 1.
\end{equation}
Note that $\pip(w)=\mu_2(w)$ if $\pi_0(w)=\pi_1(w)$, and if $\pi_0(w)\ne \pi_1(w)$, then we have that
\[
\pip(w)=\frac{\eta_-}{\pi_0(g(w)_{k-1})}\, \max\{\pi_0(g(w')_{k-1})\, \mu_2(w')\, :\, w'\in V[n_0]_k\text{ with }w\sim w'\}
  >\mu_2(w).
\]
Thus we have $\pip(w)\ge \mu_2(w)$ for all $w\in V[n_0]$. 
Observe that by~\eqref{eq:H3'-mu1}, the function $\pip$ also satisfies~\eqref{eq:H3'-mu1} when $\mu_2$ is
replaced by $\pip$ (recall that in~\cite{Car} this property is called (H$3'$)).

Furthermore, if $w\in V[n_0]_k$ and $\pip(w)\ne \mu_2(w)$, then there is some horizontal neighbor $w'$ of $w$ such that
$\pi_0(w)=\eta_-\, \pi_0(g(w')_{k-1})\, \mu_2(w')$, and then, as $g(w')_{k-1}\sim g(w)_{k-1}$, it follows that
\begin{equation}\label{eq:mu2-pip}
\mu_2(w)\le \pip(w)\le \eta_-\, K\, \mu_2(w')=\mu_2(w')
\le \max\{\mu_2(\widehat{w})\, :\, \widehat{w}\in SI(w)\}.
\end{equation}
In the above, $SI(w)$ denotes the set of immediate siblings of $w$, as defined in~\eqref{eq:immediate-sibling}.
Thus $\eta_-\le \pip(w)$. Moreover, by~\eqref{eq:etaPlus} above, we also have that $\mu_2(w')^p\le 2^{p+1}(N_2+1)\, \eps$
for each $w'\in V[n_0]$, and hence $\mu_2(w')\le 2^{1+1/p}(N_2+1)^{1/p}\, \eps^{1/p}=:\eta_+<1$.
Thus  by \eqref{eq:mu2-pip} we also have that
$\pip(w)\le \eta_+$. In summary, we have that for each $w\in V[n_0]$,
\begin{equation}\label{eq:eta+-gen1}
0<\eta_-\le \pip(w)\le \eta_+<1,
\end{equation}
thus verifying Property~(H1) of Theorem~\ref{thm:Car1.1} for both the graphs $G[n_0]$ and $Z[n_0]$ (both these graphs
have the same vertex set $V[n_0]$).

Observe also that when $w \in V[n_0]_k$, we have
\begin{equation}\label{eq:pi-pip}
\pi_0(w)=\prod_{j=0}^k\pip(g(w)_j).
\end{equation}
Hence, 
if $v,w\in V[n_0]_k$ such that $v\sim w$, then by~\eqref{eq:Pi-0} which is satisfied by $\pi_0$, we have
\[
\prod_{j=0}^k\pip(g(w)_j)\le K\, \prod_{j=0}^k\pip(g(v)_j),
\] 
thus verifying Property~(H2) for the graph $Z[n_0]$.

On the other hand, there is no guarantee that $\pi_0$, which is related to $\pip$ in exactly the same way that
$\pi$ was constructed from $\rho$ in Theorem~\ref{thm:Car1.1} (see~\eqref{eq:pi-pip} above), satisfies
Condition~(H4) of this theorem for the graph $Z[n_0]$. For this reason, we conduct one final modification next.

\subsection{Improving $\pip$ to $\rho$, the final candidate for the weight}\label{Sub:rho}

To obtain validity of Condition~(H4) of Theorem~\ref{thm:Car1.1} for the graph $Z[n_0]$, we have to modify $\pip$
further.

 Recall the definition of $D[n_0]_j(v)$ for each $v\in V[n_0]$ from subsection~\ref{Sub:Z} above. Observe that
$D[n_0]_1(v_0)=V[n_0]_1$ where $v_0$ is the root vertex of both graphs $G[n_0]$ and $Z[n_0]$.
Recall the definition of $SI(w)$ from~\eqref{eq:immediate-sibling}.
Observe by~\eqref{eq:mu2-pip} that for each $v\in V[n_0]_k$, 
\begin{align*}
\sum_{w\in D[n_0]_1(v)}\pip(w)^p
&\le \sum_{w\in D[n_0]_1(v)}\max\{\mu_2(\widehat{w})^p\, :\, \widehat{w}\in SI(w)\}\\
&\le \sum_{w\in D[n_0]_1(v)}\ \sum_{w'\in SI(w)}\ \mu_2(w')^p.
\end{align*}
If $w',w\in V[n_0]_k$ with $k\ge 1$ and $w'\in SI(w)$, then by Lemma~\ref{lem:vertical-neigbr-parent} we 
know that $g(w')_{k-1}\sim g(w)_{k-1}$, and so by~\eqref{eq:etaPlus} and the fact that
$D[n_0]_1(\widehat{v})\subset T[n_0](\widehat{v})$ whenever $\widehat{v}\in V[n_0]$, we have
\begin{align*}
\sum_{w\in D[n_0]_1(v)}\pip(w)^p
&\le \sum_{v'\in SI(v)}\ \ \sum_{w'\in D[n_0]_1(v')}\mu_2(w')^p\\
&\le \sum_{v'\in SI(v)}\ \ 2^{p+1}(N_2+1)\, \eps \le 2^{p+1} (N_2+1)^2\, \eps<1,
\end{align*}
where we used the condition on $\eps$ from~\eqref{eq:eps-bound} from the beginning of the present section in concluding the
last inequality. Note also that the direct descendant of $v=(x_v,n_v)$ is the unique vertex $w_v\in V[n_0]_{n_v+1}$ such that
$w_v=(x_v,n_v+1)$; so we can choose $\omega(v)>1$ such that
\begin{equation}\label{eq:defn-omega}
\omega(v)^p\, \pip(w_v)^p+\sum_{w\in D[n_0]_1(v)\setminus w_v}\pip(w)^p=1.
\end{equation}

We now define $\rho:V[n_0]\to(0,\infty)$ as follows. Having chosen $\rho(v_0)=1$, we note that there is exactly
one vertex $w_{v_0}\in V[n_0]_1$ such that $w_{v_0}=(x_0,1)$, where the root vertex is  $v_0=(x_0,0)$. 
Note also that $D[n_0]_1(v_0)=V[n_0]_1$. For $w\in V[n_0]_1$, we set
$\rho(w)=\pip(w)$ if $w\ne w_{v_0}$, and $\rho(w_{v_0})=\omega(w_0)\pip(w_{v_0})$.
Then for each $v\in V[n_0]_k$, $k\in\N$, we set
\[
\rho(w)=\begin{cases} \pip(w)&\text{ if }w\ne w_v,\\
                                   \omega(v)\pip(w)&\text{ if }w=w_v.\end{cases}
\]
This defines $\rho:V[n_0]\to\R$. We wish to apply Theorem~\ref{thm:Car1.1} to the hyperbolic filling graph $G[n_0]$
together with the density $\rho$ given above, to conclude that the Ahlfors regular conformal dimension of $X$ is at most $p$.
That is the focus of the rest of the discussion in this paper. The conditions~(H1), (H2), and (H4) are more readily verified,
and we do so next. Condition~(H3) is more complicated, and we postpone its proof to the next subsection.

\begin{enumerate} 
\item We now verify Condition~(H1) of Theorem~\ref{thm:Car1.1} for our choice of $\rho$.
 As $\eta_-\le \mu_2\le \rho$, it follows that for each $w\in V[n_0]$ we have that
$\rho(w)\ge \eta_-$. Furthermore, as $X$ is uniformly perfect, by the choice of $\alpha$ we know that for each
$v\in V[n_0]$ the descendant set $D[n_0]_1(v)$ has at least two points. Indeed, it contains $w_v=(x_v,n_v+1)$,
as well as at least one point $w'=(x_{w'},n_v+1)$ with $x_{w'}\in B(x_v,\alpha^{-n_0\, (n_v+1)})\setminus B(x_v, \alpha^{-n_0\, (n_v+2)})$.
Therefore, we have from the equation~\eqref{eq:defn-omega} that defines the value of $\omega(v)$, that
\[
\omega(v)^p\pip(w_v)^p\le 1-\pip(w')^p\le 1-\eta_-^p,
\]
and so
\begin{equation}\label{eq:omega-estimates}
\eta_-\le \omega(v)\pip(w_v)=\rho(w_v)\le (1-\eta_-^p)^{1/p}<1.
\end{equation}
From this, and also by applying~\eqref{eq:eta+-gen1} to the case $w\ne w_v$, we see that for each $w\in V[n_0]$,
\begin{equation}\label{eq:rho-H1}
\eta_-\le \rho(w)\le \max\{\eta_+,(1-\eta_-^p)^{1/p}\}<1.
\end{equation}
Hence $\rho$ satisfies Condition~(H1) of Theorem~\ref{thm:Car1.1} for both $G[n_0]$ as well as $Z[n_0]$.

\item We now set $\pi:V[n_0]\to (0,\infty)$ by
\[
\pi(w)=\prod_{j=1}^{n_w}\rho(g(w)_j),
\]
where the ancestry $g(w)$ of a vertex $w\in V[n_0]_{n_w}$ is set out in subsection~\ref{Sub:Z} above.
Now we verify Condition~(H2) of Theorem~\ref{thm:Car1.1} for the graph $Z[n_0]$.

Let $v,w\in V[n_0]$ such that $v\sim_Z w$. If $v$ and $w$ are in different generations of $G[n_0]$,
or equivalently, in different generations of $Z[n_0]$, then  we have that
$\tfrac{\pi(w)}{\pi(v)}=\rho(w)$ when $n_w=n_v+1$ and $\tfrac{\pi(w)}{\pi(v)}=\tfrac{1}{\rho(v)}$ if $n_v=n_w+1$,
and so by~\eqref{eq:rho-H1} We have
\[
\frac{1}{K}\le \frac{\pi(w)}{\pi(v)}\le K,\ \ \ K=1/\eta_-.
\]
If $n_v=n_w=:m$, then consider the unique vertical path $v_0\sim_Z w_1\sim_Z\cdots\sim_Z w_m=w$
and the unique vertical path $v_0\sim_Z v_1\sim_Z\cdots\sim_Z v_m=v$, and let $k\in\{0, 1, \cdots, m\}$
be the largest integer for which $w_k=v_k$. As $v\sim w$ and so $v\ne w$, necessarily $0\le k<m$. 
Moreover, we also have that $v_j\sim_Z w_j$ or $v_j=w_j$ when $j=1,\cdots, m$.

We now show that for $k+1 \le j <m $, neither $w_{j+1}$ nor $v_{j+1}$ is a direct descendant of an earlier vertex. 
To do so, first 
note that we cannot have both $w_{k+1}$ and $v_{k+1}$ be direct descendants of $v_k$, by the maximality of $k$. 
If there is some
$j\in\{k+1,\cdots, m-1\}$ such that $w_{j+1}$ is a direct descendant of $w_j$ and $v_{j+1}$ is a direct
descendant of $v_j$, then we have that $d(x_{v_j}, x_{w_j})\ge \alpha^{-n_0j}$ and at the same time
$d(x_{v_j}, x_{w_j})<2\tau \alpha^{-n_0(j+1)}$, from which we would have that $\alpha^{n_0}\le 2\tau$,
which is not possible because of~\eqref{n0-condition}. 

Now, for any $j\in\{k+1,\cdots, m-1\}$, suppose that $v_{j+1}$ is the direct descendant
of $v_j$; then $w_{j+1}$ is not the direct descendant of $w_j$. In this case, we have that
$d(x_{v_j},x_{w_{j+1}})=d(x_{v_{j+1}},x_{w_{j+1}})<2\tau\alpha^{-n_0(j+1)}$,
while $d(x_{v_j},x_{w_j})\ge \alpha^{-n_0j}$. Then
\begin{align*}
d(x_{w_{j+1}},x_{w_j})\ge d(x_{v_j},x_{w_j})-d(x_{v_j},x_{w_{j+1}})&> \alpha^{-n_0j}-2\tau\alpha^{-n_0(j+1)}\\
&=\alpha^{-n_0j}\, [1-2\tau\alpha^{-n_0}],
\end{align*}
and as by~\eqref{n0-condition} we have that
\[
d(x_{v_j},x_{w_{j+1}})<2\tau\alpha^{-n_0(j+1)} <\alpha^{-n_0j}\, [1-2\tau\alpha^{-n_0}]< d(x_{w_{j+1}},x_{w_j}),
\]
it would follow that $w_j$ cannot be a vertical neighbor of $w_{j+1}$ in $Z[n_0]$ by the construction of $Z[n_0]$ 
from subsection~\ref{Sub:Z}. It follows that for $j\ge k+1$, neither $w_{j+1}$ nor $v_{j+1}$ are direct descendants of
$w_{j}$, $v_{j}$. 

Thus, at most one of $w_{k+1}$, $v_{k+1}$ is the direct descendant, and for $j\ge k+2$
neither of $w_j,v_j$ is a direct descendant. Hence, in the situation that $n_v=n_w$, we have that
\[
\frac{\pi(v)}{\pi(w)}=\lambda_{vw}\, \frac{\pi_0(v)}{\pi_0(w)}, 
\]
with $\lambda_{vw}=1$ if neither $v_{k+1}$ nor $w_{k+1}$ is a direct descendant, and
$\lambda_{vw}=\omega(v_k)$ if $v_{k+1}$ is the direct descendant of $v_k$, and
$\lambda_{vw}=1/\omega(v_k)$ if $w_{k+1}$ is the direct descendant of $v_k$. 
Note that by the construction of $\pi_0$, we know the validity of comparison~\eqref{eq:Pi-0} with $K=1/\eta_-$.
Thus, to verify~(H2) we only need
estimates for $\omega(v_k)$ in the latter two cases. 

Without loss of generality we assume that 
$v_{k+1}$ is the direct descendant of $v_k$.
Note by~\eqref{eq:omega-estimates} that $\eta_-\le \omega(v_k)\pip(v_{k+1})\le (1-\eta_-^p)^{1/p}$, and so
by~\eqref{eq:eta+-gen1} we have
\[
\frac{\eta_-}{\eta_+}\le 
\frac{\eta_-}{\pip(v_{k+1})}\le \omega(v_k)\le \frac{(1-\eta_-^p)^{1/p}}{\pip(v_{k+1})}\le \frac{(1-\eta_-^p)^{1/p}}{\eta_-}
  =(\eta_-^{-p}-1)^{1/p}.
\]
Hence we have
\[
\frac{\eta_-}{K\, \eta_+}\le 
\frac{\pi(v)}{\pi(w)}\le K\, (\eta_-^{-p}-1)^{1/p},
\]
and so Condition~(H2) is verified for the graph $Z[n_0]$ with constant 
\[
K_0=\max\{\eta_-^{-1},\, \eta_-^{-1}\, (\eta_-^{-p}-1)^{1/p},\, \eta_+/\eta_-^2\}.
\]
Finally we verify Condition~(H2) for the full graph $G[n_0]$. Notice that the only edges in $G[n_0]$ that are
absent in $Z[n_0]$ are some of the vertical edges. If $v\sim w$ is such an edge, then 
without loss of generality, we can assume that $n_w=n_v+1$; then
we know
from Lemma~\ref{lem:vertical-neigbr-parent} that $v\sim_Z g(w)_{n_v}$, and so comparing $\pi(v)$ to 
$\pi(g(w)_{n_v})$ and then $\pi(g(w)_{n_v})$ to $\pi(v)$, we obtain Condition~(H2) for the graph $G[n_0]$
with constant
\begin{equation}\label{eq:K0}
K_0=\left(\max\{\eta_-^{-1},\, \eta_-^{-1}\, (\eta_-^{-p}-1)^{1/p},\, \eta_+/\eta_-^2\}\right)^2.
\end{equation}
Note that when two vertices $v_1,v_2\in G[n_0]$ are horizontal neighbors, they are then necessarily neighbors in
$Z[n_0]$ as well, and so we have $K_0^{-1/2}\, \pi(v_1)\le \pi(v_2)\le K_0^{1/2}\, \pi(v_1)$.

\item We now verify Condition~(H4) of Theorem~\ref{thm:Car1.1}. Indeed, for vertices $w\in V[n_0]$,
by the definition of $\omega(w)$ from~\eqref{eq:defn-omega} we have that
\[
\sum_{u\in D[n_0]_1(w)}\rho(u)^p=1,
\]
and so 
\[
\sum_{u\in D[n_0]_1(w)}\pi(u)^p=\pi(w)^p\, \sum_{u\in D[n_0]_1(w)}\rho(u)^p=\pi(w)^p.
\]
Thus, induction now gives the validity of Condition~(H4) for the graph $Z[n_0]$ with constant $1$. Given a vertex $v\in V[n_0]_n$
for some $n\in\N$, for each $k>n$ by $DG[n_0]_k(v)$ we mean the collection of all vertices $w$ in $V[n_0]_k$
such that there is a vertically descending path in $G[n_0]$ from $v$ to $w$. Note that $w\in D[n_0]_k(v)\subset DG[n_0]_k(v)$.
For $w\in DG[n_0]_k(v)\setminus D[n_0]_k(v)$, we know that $g(w)_n\sim_Z v$ and so $g(w)_n\sim v$.
Therefore we also have that
\[
DG[n_0]_k(v)\subset\bigcup_{v'\in V[n_0]_n,\, v'\sim_Z v\text{ or }v=v'}\ D[n_0]_k(v').
\]
Hence by Condition~(H2) we have
\begin{align*}
\pi(v)^p= \sum_{w\in DG[n_0]_k(v)}\, \pi(w)^p&\le\sum_{v'\in SI(v)}\ \sum_{w\in D[n_0]_k(v')}\pi(w)^p\\
  &=\sum_{v'\in SI(v)}\pi(v')^p\\
  &\le K_0^p\ \#~SI(v)\ \pi(v)^p\\
  &\le (N_2+1)\, K_0^p\, \pi(v)^p.
\end{align*}
Thus Condition~(H4) is verified for the graph $G[n_0]$ with constant 
\[
K_2=(N_2+1)\, K_0^p,
\]
where $N_2$ is the maximal number of horizontal neighbors of vertices in $V[n_0]$ (which depends solely on the 
doubling constant and the choice of $\tau$ but is independent of $n_0$).
\end{enumerate}

\subsection{Verifying~(H3) for the weight $\rho$ on $Z[n_0]$.}\label{Sub:H3}

Note that as $\rho\ge\pip\ge \mu_2$, by~\eqref{eq:H3'-mu1} we know that~\eqref{eq:H3'-mu1} is also satisfied by $\rho$, that is,
\begin{equation}\label{eq:H3'-rho}
\sum_{i=1}^{N-1}\min\{\rho(w)\, :\, w\in V[n_0]_{k+1}, w\sim v_i\text{ or }w\sim v_{i+1}\}\ge 1
\end{equation}
whenever $v\in V[n_0]$ with $n_v=k$ and $\{v_i\}_{i=1}^N$ is a curve in $\Gamma[n_0]_1(B_v)$.
Recall the definition on immediate sibling set $SI$ from~\eqref{eq:immediate-sibling}.

Condition~(H3) concerns bounding integral of $\pi$ along paths from below. To verify Condition~(H3) it is 
convenient to introduce a new length that we can manipulate more easily. Towards the end of the section we 
will verify comparability of this length to the path integrals of $\pi$. With the function $\pi$ in hand, set
\begin{equation*}
\pi^*(v):=\min\{\pi(v')\, :\, v'\in SI(v)\},\ \ \ \rho^*(v):=\min\{\rho(v')\, :\, v'\in SI(v)\}
\end{equation*}
for $v\in V[n_0]$. Then by~\eqref{eq:H3'-rho} we know that  if $v\in V[n_0]$ and $\{v_i\}_{i=1}^N$ is a path in
$\Gamma[n_0]_1(B_v)$, 
\begin{equation}\label{eq:rho-star-H3'}
\sum_{i=1}^{N-1}\min\{\rho^*(v_i),\, \rho^*(v_{i+1})\}\ge 1.
\end{equation}
If $e$ is an edge in $Z[n_0]$, that is, if $e=\, v\sim_Z v'$, then set
\begin{equation}\label{eq:def-ell}
\ell(e)=\begin{cases}\min\{\pi^*(v),\, \pi^*(v')\} &\text{ if }n_{v'}=n_v,\\
   K_0^2\, \pi^*(v') &\text{ if }n_{v'}=n_v+1.\end{cases}
\end{equation}
The $\ell$-length of a path is the sum of the $\ell$-lengths of each edge that constitutes the path.

We proceed to find estimates for the $\ell$-length for various types of paths. Given the variety of paths we can have, 
there will be multiple lemmas and sublemmas, but the proofs are based only on a few basic observations.

We have already proven that $\pi$ satisfies Condition~(H2) for both $Z[n_0]$ and $G[n_0]$.
Therefore, for $v\in V[n_0]$ with $n_v\ge 1$, we have that 
\[
\pi^*(g(v)_{n_v-1})\le \pi(g(v)_{n_v-1})\le K_0\, \pi(v)\le K_0^2\, \pi^*(v).
\]
Let $v'\in SI(g(v)_{n_v-1})$. Then from the above set of inequalities and the definition of $\ell$ we see that
\begin{equation}\label{eq:vert-horiz}
\ell(v'\sim_Z g(v)_{n_v-1})\le \ell(v\sim_Z g(v)_{n_v-1}).
\end{equation}

\begin{lem}\label{lem:2.10}
If $\{v_i\}_{i=1}^N\in \Gamma[n_0]_1(B_v)$ for some $v\in V[n_0]$,
then 
\[
\ell(\{v_i\}_{i=1}^N)=\sum_{i=1}^{N-1}\ell(v_i\sim_Z v_{i+1})\ge \max\{\pi^*(v),\pi^*(g(v_1)_{n_v})\}.
\]
\end{lem}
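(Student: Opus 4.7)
The plan is to establish the pointwise bound
\[
\pi^*(v_i)\ \ge\ \max\bigl\{\pi^*(v),\ \pi^*(g(v_1)_{n_v})\bigr\}\cdot\rho^*(v_i)
\]
for every index $i$, from which the lemma follows. Indeed, since every edge of the path lies at generation $n_v+1$, the definition~\eqref{eq:def-ell} gives $\ell(\{v_i\}_{i=1}^N)=\sum_{i=1}^{N-1}\min\{\pi^*(v_i),\pi^*(v_{i+1})\}$, and then the elementary inequality $\min\{ab,cd\}\ge\min\{a,c\}\cdot\min\{b,d\}$ applied to each consecutive pair, together with the admissibility estimate~\eqref{eq:rho-star-H3'}, produces the desired lower bound $\max\{\pi^*(v),\pi^*(g(v_1)_{n_v})\}$.

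To prove the pointwise bound I would first (without loss of generality) truncate the path to a minimal connector so that $B_{v_i}\cap 2B_v\ne\emptyset$ for every $i$. Then, for a fixed index $i$, choose $w^*\in SI(v_i)$ realizing $\pi^*(v_i)$, and use the multiplicative factorization valid at level $n_v+1$ to write
\[
\pi^*(v_i)\ =\ \pi(w^*)\ =\ \pi(g(w^*)_{n_v})\cdot\rho(w^*).
\]
Since $\rho(w^*)\ge\rho^*(v_i)$ by definition of $\rho^*$, the remaining task is to verify the geometric inclusion $g(w^*)_{n_v}\in SI(v)\cap SI(g(v_1)_{n_v})$, because these two memberships supply $\pi(g(w^*)_{n_v})\ge\pi^*(v)$ and $\pi(g(w^*)_{n_v})\ge\pi^*(g(v_1)_{n_v})$ respectively.

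For the first inclusion, a triangle-inequality computation based on $w^*\sim v_i$ (or $w^*=v_i$), $B_{v_i}\cap 2B_v\ne\emptyset$, and $g(w^*)_{n_v}\sim w^*$ yields
\[
d(x_v,x_{g(w^*)_{n_v}})\ <\ \alpha^{-n_0 n_v}\bigl[3+2\alpha^{-n_0}+2\tau\alpha^{-n_0}\bigr]\ <\ \tfrac{\tau}{2}\,\alpha^{-n_0 n_v},
\]
where the last inequality follows from $6+4\alpha^{-n_0}+8\tau\alpha^{-n_0}<\tau$ in~\eqref{n0-condition}; this places $g(w^*)_{n_v}$ as a horizontal neighbor of $v$ (or as $v$ itself), hence in $SI(v)$. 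The second inclusion follows similarly by combining the above bound with $d(x_v,x_{g(v_1)_{n_v}})<2(1+\alpha^{-n_0})\alpha^{-n_0 n_v}$ — obtained from $B_{v_1}\cap B_v\ne\emptyset$ and $g(v_1)_{n_v}\sim v_1$ — to get $d(x_{g(v_1)_{n_v}},x_{g(w^*)_{n_v}})<2\tau\alpha^{-n_0 n_v}$, so that $g(w^*)_{n_v}\in SI(g(v_1)_{n_v})$.

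The main obstacle is precisely this geometric step of placing $g(w^*)_{n_v}$ simultaneously in both sibling sets: the term $8\tau\alpha^{-n_0}$ in~\eqref{n0-condition} is exactly the budget needed to absorb the two-edge chain $g(w^*)_{n_v}\sim w^*\sim v_i$ (plus the excursion of $v_i$ within $2B_v$) back into a horizontal $\tau$-neighborhood of $v$ at level $n_v$. Once the inclusion is secured, the factorization $\pi=\pi(\mathrm{parent})\cdot\rho$ transmutes the modulus admissibility of $\rho^*$ into the desired length estimate for $\pi$ with no loss of multiplicative constants.
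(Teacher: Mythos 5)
Your proof is correct and follows essentially the same approach as the paper: choose $w^*\in SI(v_i)$ realizing $\pi^*(v_i)$, factor $\pi(w^*)=\pi(g(w^*)_{n_v})\rho(w^*)$, verify via triangle inequalities and~\eqref{n0-condition} that $g(w^*)_{n_v}$ lies in both $SI(v)$ and $SI(g(v_1)_{n_v})$, and then invoke~\eqref{eq:rho-star-H3'}. The only difference is presentational — you establish the two sibling memberships at once to get a single pointwise bound $\pi^*(v_i)\ge\max\{\pi^*(v),\pi^*(g(v_1)_{n_v})\}\,\rho^*(v_i)$, whereas the paper runs the admissibility estimate twice (once for $v$ and once for $g(v_1)_{n_v}$) and takes the max at the end; this is a reorganization, not a different route.
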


\begin{proof}
For $i=1,\cdots, N$ let $\widehat{v_i}\in SI(v_i)$ such that $\pi^*(v_i)=\pi(\widehat{v_i})$. Setting $n:=n_v$, we then
see that
\begin{align*}
d(x_{g(\widehat{v_i})_{n}},x_{\widehat{v_i}})&<\alpha^{-n_0\, n}+\alpha^{-n_0\, (n+1)},\\
d(x_{\widehat{v_i}},x_{v_i})&<2\tau \alpha^{-n_0\, (n+1)},\\
d(x_{v_i},x_v)&<2\alpha^{-n_0\, n}+\alpha^{-n_0\, (n+1)},
\end{align*}
and combining these, we see that 
\[
d(x_{g(\widehat{v_i})_{n}},x_v)<\alpha^{-n_0\, n}\, [3+2(1+\tau)\alpha^{-n_0}]<\tau\, \alpha^{-n_0\, n},
\]
where we used~\eqref{n0-condition} to obtain the last inequality above. Now it follows for each $i=1,\cdots, N$,
that $g(\widehat{v_i})_n\sim_Z v$. 
From the above and from the largeness requirement on $n_0$ given in~\eqref{n0-condition}, we also get that
\[
d(x_{g(\widehat{v_i})_{n}}, x_{g(\widehat{v_1})_{n}})<\tau\, \alpha^{-n_0\, n},
\]
from which we get that $g(\widehat{v_i})_n\sim_Z g(\widehat{v_1})_n$.
Finally, as $g(\widehat{v_i})\sim v$ for each $i$, we have that 
\begin{align*}
\ell(\{v_i\}_{i=1}^N)=\sum_{i=1}^{N-1}\min\{\pi(\widehat{v_i}),\pi(\widehat{v_{i+1}})\}
&\ge\sum_{i=1}^{N-1}\pi^*(v)\, \min\{\rho(\widehat{v_i}), \rho(\widehat{v_{i+1}})\}\\
  &\ge \pi^*(v)\sum_{j=1}^{N-1}\min\{\rho^*(v_i),\rho^*(v_{i+1})\}\\
   &\ge \pi^*(v),
\end{align*}
where we have used inequality~\eqref{eq:rho-star-H3'} in the last step. 
A similar argument with $g(v_1)_n$ playing the role of $v$
in the second to last step above also gives
\[
\ell(\{v_i\}_{i=1}^N)\ge \pi^*(g(v_1)_n),
\]
and combining the above two sets of inequalities yields the claim of the lemma.
\end{proof}

	\begin{remark}\label{rem:simples}
	Let $v,v'\in V[n_0]$. We want to show that the length of every curve $\gamma$ in $Z[n_0]$ with end points $v,v'$ is controlled
	from below by a fixed constant multiple of $\pi(z_{v,v'})$,  where $z_{v,v'}\in V[n_0]$ is the 
	vertex as described in Condition~(H3) of Theorem~\ref{thm:Car1.1}.
	We first deal with the easy situations.
	If either of the following cases occurs, then $z_{v,v'}\in\{v,v',g(v)_{k-1}, g(v')_{k-1}\}$, and the claim would follow:
\begin{itemize}
	\item $v,v'\in V[n_0]_k$ and $g(v)_{k-1}=g(v')_{k-1}$ or $g(v)_{k-1}\sim_Zg(v')_{k-1}$.
	\item $v\in V[n_0]_k$ and $g(v)_{k-1}\sim_Z v'$ or $g(v)_{k-1}=v'$.
	\item $v'\in V[n_0]_k$ and $g(v')_{k-1}\sim_Z v$ or $g(v')_{k-1}=v$.
\end{itemize}
Hence, from now on, we will assume that $v,v'\in V[n_0]$ do not satisfy any of the above three cases.
\end{remark}

Within the next few lammata, culminating in Lemma~\ref{lift6}, we show that any path $\gamma$ in $Z[n_0]$ that joins two vertices 
$v,v'\in V[n_0]_k$ can be replaced by a shorter path (in $\ell$-length) connecting $v$ to $v'$ that remains within the first $k$ levels.

\begin{sublem}\label{lem:short}
	If $v,v'\in V[n_0]_k$ and that $x_{v'}\in 2B_{g(v)_{k-1}}$, then $g(v)_{k-1}\sim_Zg(v')_{k-1}$.
\end{sublem}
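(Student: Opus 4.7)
The plan is to control the distance $d(x_{g(v)_{k-1}}, x_{g(v')_{k-1}})$ by the triangle inequality and then read off the horizontal-neighbor relation directly from the definition of edges in $G[n_0]$.

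First I would invoke the construction of $Z[n_0]$ from subsection~\ref{Sub:Z}: among all vertices in $V[n_0]_{k-1}$, the vertex $g(v')_{k-1}$ is chosen so that $x_{g(v')_{k-1}}$ realises the distance from $x_{v'}$ to $A_{n_0(k-1)}$. Because $A_{n_0(k-1)}$ is a maximal $\alpha^{-n_0(k-1)}$-separated subset of $X$, the balls $B(a, \alpha^{-n_0(k-1)})$ for $a\in A_{n_0(k-1)}$ cover $X$, and therefore $d(x_{v'}, x_{g(v')_{k-1}}) < \alpha^{-n_0(k-1)}$. Combining this with the hypothesis $x_{v'}\in 2B_{g(v)_{k-1}}$ via the triangle inequality yields
\[
d(x_{g(v)_{k-1}}, x_{g(v')_{k-1}}) \le d(x_{g(v)_{k-1}}, x_{v'}) + d(x_{v'}, x_{g(v')_{k-1}}) < 3\alpha^{-n_0(k-1)}.
\]

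Under the standing assumption $\tau>6$ this forces $d(x_{g(v)_{k-1}}, x_{g(v')_{k-1}}) < 2\tau\alpha^{-n_0(k-1)}$, so the two balls $B(x_{g(v)_{k-1}}, \tau\alpha^{-n_0(k-1)})$ and $B(x_{g(v')_{k-1}}, \tau\alpha^{-n_0(k-1)})$ intersect. This is precisely the horizontal-neighbor criterion at level $k-1$ in $G[n_0]$; since horizontal edges of $G[n_0]$ are preserved in $Z[n_0]$ by construction, we conclude that either $g(v)_{k-1} = g(v')_{k-1}$ or $g(v)_{k-1}\sim_Z g(v')_{k-1}$, which is the content of the sublemma read in the sense of Remark~\ref{rem:simples}. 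No real obstacle arises: the argument reduces to a one-line triangle inequality once the near-minimality property defining the $Z[n_0]$-parent has been brought into play.
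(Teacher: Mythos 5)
Your proof is correct and takes essentially the same route as the paper's: a triangle inequality bound on $d(x_{g(v)_{k-1}}, x_{g(v')_{k-1}})$ followed by reading off the horizontal-neighbor criterion at level $k-1$. The only (cosmetic) difference is that you exploit the near-minimality defining the $Z[n_0]$-parent to get the sharper bound $d(x_{v'}, x_{g(v')_{k-1}}) < \alpha^{-n_0(k-1)}$ and route the triangle inequality directly through $x_{v'}$, whereas the paper routes through $x_v$ and uses only the weaker vertical-neighbor bound $d(x_{v'}, x_{g(v')_{k-1}}) < \alpha^{-n_0 k} + \alpha^{-n_0(k-1)}$; both land comfortably below $\tau\alpha^{-n_0(k-1)}$.
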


\begin{proof}
	By triangle inequality, we have that 
	\begin{align*}
		d(x_{g(v)_{k-1}}, x_{g(v')_{k-1}})&\le d(x_{g(v)_{k-1}}, x_v)+d(x_v,x_{v'})+d(x_{v'},  x_{g(v')_{k-1}}))\\
		&<2[\alpha^{-n_0k}+\alpha^{-n_0(k-1)}]+2\alpha^{-n_0(k-1)}\\
		&=2(\alpha^{-n_0}+2)\, \alpha^{-n_0(k-1)}<\tau\, \alpha^{-n_0(k-1)},
	\end{align*}
	where we used the condition on $n_0$ from~\eqref{n0-condition}. 
\end{proof}
	
	\begin{sublem}\label{lift1}
		Let $v \in V[n_0]_k$ and suppose $w',w \in V[n_0]_{k+1}$, $w'\sim_Z w$, $B_{w'} \cap 2B_v \neq \emptyset$, $B_{w} \cap (X\setminus 2B_v) \neq \emptyset$. Then $v \sim_Z g(w)_k$. 		
	\end{sublem}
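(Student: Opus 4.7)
The plan is to establish $v \sim_Z g(w)_k$ by directly estimating $d(x_v, x_{g(w)_k})$ via triangle inequality and showing that it is strictly less than $\tau\alpha^{-n_0 k}$, so that $v$ and $g(w)_k$ are horizontal neighbors in $G[n_0]$ (and therefore also in $Z[n_0]$, since horizontal edges are preserved). The three ingredients I would combine are: (i) since $w$ and $w'$ lie in the same level $V[n_0]_{k+1}$, the relation $w' \sim_Z w$ is necessarily a horizontal one, which gives $d(x_w, x_{w'}) < 2\tau\alpha^{-n_0(k+1)}$; (ii) the hypothesis $B_{w'} \cap 2B_v \neq \emptyset$ yields $d(x_v, x_{w'}) < 2\alpha^{-n_0 k} + \alpha^{-n_0(k+1)}$; and (iii) the $Z[n_0]$-parent relation together with the maximality of $A_{n_0 k}$ ensures $d(x_w, x_{g(w)_k}) \le \alpha^{-n_0 k}$, since $g(w)_k$ is defined as the element of $A_{n_0 k}$ closest to $x_w$ and the $\alpha^{-n_0 k}$-balls centered on $A_{n_0 k}$ cover $X$.

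Combining these via the triangle inequality, I would obtain
\[
d(x_v, x_{g(w)_k}) < 3\alpha^{-n_0 k} + (1 + 2\tau)\alpha^{-n_0(k+1)}
   = \alpha^{-n_0 k}\bigl[3 + (1 + 2\tau)\alpha^{-n_0}\bigr].
\]
Now I invoke the standing assumption~\eqref{n0-condition}, namely $6 + 4\alpha^{-n_0} + 8\tau\alpha^{-n_0} < \tau$, which in particular implies $3 + (1+2\tau)\alpha^{-n_0} < \tau$. Hence $d(x_v, x_{g(w)_k}) < \tau\alpha^{-n_0 k}$, so the $\tau\alpha^{-n_0 k}$-balls centered at $x_v$ and $x_{g(w)_k}$ intersect (both contain the common point $x_v$, say). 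By the definition of the neighbor relation at the same generation in $G[n_0]$, this gives $v \sim g(w)_k$ (or $v = g(w)_k$). Since all horizontal edges of $G[n_0]$ are retained in $Z[n_0]$, we conclude $v \sim_Z g(w)_k$.

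There is essentially no obstacle here beyond careful bookkeeping of the three triangle-inequality estimates and an appeal to the size condition~\eqref{n0-condition} on $n_0$. The only subtlety is confirming that the $Z$-edge between $w$ and $w'$ is horizontal (forced by their lying at the same level) and that $g(w)_k$ really satisfies $d(x_w, x_{g(w)_k}) \le \alpha^{-n_0 k}$, which comes directly from the construction of $Z[n_0]$ in Subsection~\ref{Sub:Z} together with the fact that $A_{n_0 k}$ is a maximal $\alpha^{-n_0 k}$-separated set, so the balls of radius $\alpha^{-n_0 k}$ around its points cover $X$.
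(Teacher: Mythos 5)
Your triangle-inequality chain is the same one the paper uses (in fact your bound $d(x_w, x_{g(w)_k}) < \alpha^{-n_0 k}$, coming from the covering property of $A_{n_0 k}$, is slightly sharper than the paper's $\alpha^{-n_0 k} + \alpha^{-n_0(k+1)}$), and the appeal to~\eqref{n0-condition} to conclude $d(x_v, x_{g(w)_k}) < \tau\alpha^{-n_0 k}$ is correct. However, your proof has a genuine gap at the very last step: you yourself note that the distance estimate only gives ``$v \sim g(w)_k$ \emph{or} $v = g(w)_k$,'' and then assert $v \sim_Z g(w)_k$ without ruling out the equality case. Since the neighbor relation is only defined between \emph{distinct} vertices, $v = g(w)_k$ would make the conclusion false, so it must be excluded.

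This is also where the unused hypothesis $B_{w} \cap (X\setminus 2B_v) \neq \emptyset$ comes in --- you never invoked it, and it is exactly what is needed. That hypothesis forces $d(x_w, x_v) \ge 2\alpha^{-n_0 k} - \alpha^{-n_0(k+1)}$, which for $n_0$ satisfying~\eqref{n0-condition} strictly exceeds your bound $d(x_w, x_{g(w)_k}) < \alpha^{-n_0 k}$ (indeed $2\alpha^{-n_0 k} - \alpha^{-n_0(k+1)} > \alpha^{-n_0 k}$ as soon as $\alpha^{n_0} > 1$). Hence $x_v \ne x_{g(w)_k}$, so $v \ne g(w)_k$, and the conclusion $v \sim_Z g(w)_k$ follows. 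Adding this observation would complete your argument; the paper makes exactly this point using its inequality~\eqref{eq:child-parent}.
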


	\begin{proof}	
		Observe that 
		\begin{align*}			
			d(x_v,x_w) \leq d(x_v,x_{w'}) + d(x_{w'},x_w) \leq 2\alpha^{-n_0k}+\alpha^{-n_0(k+1)} + 2\tau \alpha^{-n_0(k+1)}.		
		\end{align*}
		
		On the other hand,		
		\begin{equation}\label{eq:child-parent}
		d(x_{g(w)_k},x_w) \leq \alpha^{-n_0k} + \alpha^{-n_0(k+1)}.		
		\end{equation}
		
		Combining the two inequalities, and by choice of $n_0$, we obtain		
\[
		d(x_v,x_{g(w)_k}) < \tau \alpha^{-n_0k}.
\]		
Moreover, as $B_w\setminus 2B_v$ is nonempty, $d(x_w,x_v)\ge 2\alpha^{-n_0k}-\alpha^{-n_0(k+1)}$, and so by~\eqref{eq:child-parent}
we know that $v\ne g(w)_k$. Hence $v \sim_Z g(w)_k$.		
	\end{proof}

	\begin{sublem}\label{lift2}		
		Let $v \in V[n_0]_k$. Let $\gamma=w_1\sim_Z w_2\sim_Z\cdots\sim_Z w_m$ be a path in $\Gamma[n_0]_1(B_v)$. Then there exists an index $n_1$ such that $w_1\sim_Z w_2\sim_Z\cdots\sim_Z w_{n_1}$ belongs to $\Gamma[n_0]_1(B_v)$, $v \sim_Z g(w_{n_1})_k$, and
\[
		\ell(v \sim_Z g(w_{n_1})_k) \leq \ell(w_1\sim_Z w_2\sim_Z\cdots\sim_Z w_{n_1}).
\]
\end{sublem}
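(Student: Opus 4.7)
The plan is to let $n_1$ be the smallest index in $\{1,2,\ldots,m\}$ for which $B_{w_{n_1}} \cap (X \setminus 2B_v) \neq \emptyset$; such an $n_1$ exists because the full path $\gamma$ itself satisfies this at $i=m$. Then the truncated path $\gamma_1 := w_1 \sim_Z \cdots \sim_Z w_{n_1}$ belongs to $\Gamma[n_0]_1(B_v)$: all its vertices lie in $V[n_0]_{k+1}$, $B_{w_1} \cap B_v \neq \emptyset$ is inherited from $\gamma$, and $B_{w_{n_1}} \cap (X \setminus 2B_v) \neq \emptyset$ by the choice of $n_1$.

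Next I would verify that $v \sim_Z g(w_{n_1})_k$ (interpreting this as ``are neighbors or equal''). If $n_1 \geq 2$, then by the minimality of $n_1$ we have $B_{w_{n_1-1}} \subset 2B_v$, so $B_{w_{n_1-1}} \cap 2B_v \neq \emptyset$, and Sublemma~\ref{lift1} applied with $w' = w_{n_1-1}$ and $w = w_{n_1}$ yields $v \sim_Z g(w_{n_1})_k$. If $n_1 = 1$, then because $B_{w_1} \cap B_v \neq \emptyset$ we have $d(x_v, x_{w_1}) < \alpha^{-n_0 k} + \alpha^{-n_0(k+1)}$, and combined with $d(x_{w_1}, x_{g(w_1)_k}) \le \alpha^{-n_0 k} + \alpha^{-n_0(k+1)}$ we obtain $d(x_v, x_{g(w_1)_k}) < \tau \alpha^{-n_0 k}$ via~\eqref{n0-condition}, so either $g(w_1)_k = v$ (in which case the edge degenerates and has $\ell$-length $0$) or they are horizontal neighbors in $G[n_0]$ and hence in $Z[n_0]$.

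For the length bound, I would observe that because both endpoints of the edge $v \sim_Z g(w_{n_1})_k$ lie at level $k$, the edge is horizontal and
\[
\ell\bigl(v \sim_Z g(w_{n_1})_k\bigr) = \min\{\pi^*(v),\, \pi^*(g(w_{n_1})_k)\} \le \pi^*(v),
\]
with equality interpreted as $0$ when $v = g(w_{n_1})_k$. On the other hand, since $\gamma_1 \in \Gamma[n_0]_1(B_v)$, Lemma~\ref{lem:2.10} directly gives
\[
\ell(\gamma_1) \ge \max\bigl\{\pi^*(v),\, \pi^*(g(w_1)_k)\bigr\} \ge \pi^*(v).
\]
Chaining these two inequalities yields the desired $\ell(v \sim_Z g(w_{n_1})_k) \le \ell(\gamma_1)$, completing the argument.

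The main point requiring care is the verification that the truncated path $\gamma_1$ still qualifies as an element of $\Gamma[n_0]_1(B_v)$ so that Lemma~\ref{lem:2.10} is applicable. Everything else reduces to the minimality choice of $n_1$, a direct application of Sublemma~\ref{lift1}, and the trivial observation that a minimum is bounded above by either of its arguments. The proof is short because Lemma~\ref{lem:2.10} has already packaged the hard work (namely the inequality~\eqref{eq:rho-star-H3'}) into a form that immediately compares path $\ell$-length to the single quantity $\pi^*(v)$.
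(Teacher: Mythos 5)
Your proof is correct and takes essentially the same route as the paper: truncate at the first index for which the path qualifies for $\Gamma[n_0]_1(B_v)$, apply Sublemma~\ref{lift1} to get $v\sim_Z g(w_{n_1})_k$, and combine Lemma~\ref{lem:2.10} with the observation that the horizontal edge has $\ell$-length at most $\pi^*(v)$. (Your separate case $n_1=1$ is in fact vacuous: a level-$(k+1)$ ball meeting $B_v$ cannot also meet $X\setminus 2B_v$ since $2\alpha^{-n_0(k+1)}<\alpha^{-n_0k}$, so $n_1\ge 2$ always, Sublemma~\ref{lift1} applies directly, and the equality $g(w_{n_1})_k=v$ you hedge against is already excluded there.)
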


		\begin{proof}
Let $n_1$ be the smallest index such that $w_1\sim_Z w_2\sim_Z\cdots\sim_Z w_{n_1}$ belongs to $\Gamma[n_0]_1(B_v)$. This is well-defined. 
From the previous sublemma, $v \sim_Z g(w_{n_1})_k$. Lemma~\ref{lem:2.10} gives the last bound.
		\end{proof}
			
	By applying this lemma successively, we obtain:
	
	\begin{lem}\label{lift3}		
		Let $v \in V[n_0]_k$. Let $\gamma=w_1\sim_Z w_2\sim_Z\cdots\sim_Z w_m$ be a path in $\Gamma[n_0]_1(B_v)$. Then there exists $N$ and indices $0<n_1<n_2<\ldots<n_N=m$ such that, if we set $v_j={g(w_{n_j})}_k$, for each $1\leq j\leq N$, then $w_{n_j}\sim_Z \cdots \sim_Z w_{n_{j+1}}$ belongs to $\Gamma[n_0]_1(B_{v_j})$ and $v_j \sim_Z v_{j+1}$. Moreover, with $v_0:=v$, we have
\[
		\ell(v_0 \sim_Z v_1 \sim_Z \ldots \sim_Z v_N) \leq \ell(w_1\sim_Z w_2\sim_Z\cdots\sim_Z w_{n_N}).
\]
	\end{lem}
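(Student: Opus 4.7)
The plan is to iterate Sublemma~\ref{lift2} along the path $\gamma$, producing the lifted vertices $v_1, v_2, \ldots$ one at a time. Setting $v_0 := v$, I would first apply Sublemma~\ref{lift2} to the full path $w_1 \sim_Z \cdots \sim_Z w_m$ relative to $B_{v_0}$, obtaining a minimal index $n_1$ and a vertex $v_1 := g(w_{n_1})_k$ satisfying $v_0 \sim_Z v_1$ and $\ell(v_0 \sim_Z v_1) \le \ell(w_1 \sim_Z \cdots \sim_Z w_{n_1})$. I would then restart the same procedure on the remaining tail $w_{n_1} \sim_Z \cdots \sim_Z w_m$, this time relative to $B_{v_1}$, producing $n_2$ and $v_2$, and continue inductively in the obvious way.

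Because the sequence $\{n_j\}$ is strictly increasing inside $\{1,\ldots,m\}$, the process must terminate after finitely many steps. Letting $N$ be the terminating index, I would have $n_N = m$, and summing the per-step length estimates provided by Sublemma~\ref{lift2} telescopes to
\[
\ell(v_0 \sim_Z v_1 \sim_Z \cdots \sim_Z v_N) \;=\; \sum_{j=1}^{N}\ell(v_{j-1}\sim_Z v_j) \;\le\; \ell(w_1 \sim_Z \cdots \sim_Z w_m),
\]
which is the bound stated in the lemma.

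The main technical subtlety I would have to address is that each inductive application of Sublemma~\ref{lift2} requires the corresponding tail to actually belong to $\Gamma[n_0]_1(B_{v_{j-1}})$. The entrance condition $B_{w_{n_{j-1}}} \cap B_{v_{j-1}} \ne \emptyset$ is built into the choice $v_{j-1} = g(w_{n_{j-1}})_k$, since any vertical $Z[n_0]$-neighbor of $v_{j-1}$ in $V[n_0]_{k+1}$ has its associated ball meeting $B_{v_{j-1}}$. The exit condition $B_{w_m} \not\subset 2B_{v_{j-1}}$ is more delicate, because after several iterations the lifted vertex $v_{j-1}$ may drift close to the endpoint $w_m$. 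If this exit condition fails at some stage, I would short-circuit the iteration by taking $n_N := m$ and $v_N := g(w_m)_k$ directly: the horizontal edge $v_{N-1} \sim_Z v_N$ can then be produced by a triangle-inequality argument of the kind used in Sublemma~\ref{lem:short}, exploiting the largeness condition~\eqref{n0-condition} on $n_0$, while the residual length estimate is obtained by applying Lemma~\ref{lem:2.10} to the reversed tail, which in this degenerate configuration lands in $\Gamma[n_0]_1(B_{v_N})$ and thus satisfies $\ell(w_{n_{N-1}}\sim_Z\cdots\sim_Z w_m) \ge \pi^*(v_N) \ge \ell(v_{N-1}\sim_Z v_N)$, completing the telescoping bound.
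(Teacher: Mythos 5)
Your overall iterative scheme is the same as the paper's (bootstrap Sublemma~\ref{lift2} along $\gamma$, telescope the per-segment estimates), but the handling of the degenerate termination case has a genuine gap. You choose at each stage the \emph{smallest} index provided by the proof of Sublemma~\ref{lift2}, so when the iteration halts because the tail $w_{n_{N-1}}\sim_Z\cdots\sim_Z w_m$ is no longer in $\Gamma[n_0]_1(B_{v_{N-1}})$, that tail is a path you cannot feed into Lemma~\ref{lem:2.10} at all. Your salvage is to apply Lemma~\ref{lem:2.10} to the \emph{reversed} tail relative to $B_{v_N}$ with $v_N=g(w_m)_k$, but this requires the exit condition $B_{w_{n_{N-1}}}\setminus 2B_{v_N}\neq\emptyset$, and nothing in your configuration forces it: you only know $B_{w_m}\subset 2B_{v_{N-1}}$, which places both $x_{w_{n_{N-1}}}$ and $x_{v_N}$ within a few multiples of $\alpha^{-n_0 k}$ of $x_{v_{N-1}}$, and the tiny ball $B_{w_{n_{N-1}}}$ (of radius $\alpha^{-n_0(k+1)}$) can perfectly well lie entirely inside $2B_{v_N}$. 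Without that exit condition, the lower bound $\ell(w_{n_{N-1}}\sim_Z\cdots\sim_Z w_m)\ge\pi^*(v_N)$ is unsupported, and without it the telescoping sum does not close. Note that a crude per-edge bound cannot rescue this either: the edges in the residual tail are horizontal edges of generation $k+1$, whose $\ell$-lengths carry an extra factor of roughly $\eta_\pm<1$ compared with $\pi^*(v_{N-1})$ or $\pi^*(v_N)$.

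The paper sidesteps this by choosing, at each stage, the \emph{largest} index $j$ at which $B_{w_{j-1}}\cap 2B_{v_{i-1}}\neq\emptyset$ and $B_{w_j}\setminus 2B_{v_{i-1}}\neq\emptyset$. With this choice the final successfully constructed segment $w_{n_{N-1}}\sim_Z\cdots\sim_Z w_{n_N}$ still lies in $\Gamma[n_0]_1(B_{v_{N-1}})$, so Lemma~\ref{lem:2.10} yields $\ell(w_{n_{N-1}}\sim_Z\cdots\sim_Z w_{n_N})\ge\pi^*(v_{N-1})$, and this single bound is strong enough to pay for the \emph{replacement} edge $v_{N-1}\sim_Z g(w_m)_k$ (whose $\ell$-length is $\le\pi^*(v_{N-1})$), after which the residual tail past $w_{n_N}$ can simply be discarded. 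Your smallest-index iteration has no such last ``full'' segment in reserve, which is exactly the missing slack. To repair your proof you would need to switch to the paper's largest-crossing-index selection (or otherwise guarantee one additional segment genuinely belonging to $\Gamma[n_0]_1(B_{v_{N-1}})$ before termination) rather than appeal to the reversed-tail claim.
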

	
	\begin{proof}
		We know from the hypothesis that there is at least one $j\in\{1,\cdots, m\}$ such that $2B_v$ intersects $B_{w_{j-1}}$
		and $B_{w_j}\setminus 2B_v$ is nonempty; note that by Sublemma~\ref{lift2}, $\ell(w_1\sim_Z\cdots\sim_Z w_j)\ge \ell(v\sim_Z g(w_j)_k)$.
		We denote by $n_1$ 
the largest such $j$, and set $v_1:=g(w_{n_1})_k$. If $n_1=m$, then we are
		done with the proof. So, assume $n_1<m$. 
		
		If $w_{n_1}\sim_Z\cdots \sim_Z w_m$ is a path in $\Gamma[n_0]_1(B_{v_1})$, then we apply Sublemma~\ref{lift2} and the above 
		procedure to this remaining path to choose the largest $j\in\{n_1,\cdots, m\}$ for which
		$v_1\sim_Z g(w_j)_k$ and $\ell(w_{n_1}\sim_Z\cdots\sim_Z w_j)\ge \ell(v_1\sim_Z g(w_j)_k)$. Set $n_2$ to be this largest index $j$, and $v_2:=g(w_{n_2})_k$.
				
		We continue this inductively to choose $n_1,n_2,\cdots, n_N$ and 
		$v_1=g(w_{n_1})_k,v_2=g(w_{n_2})_k,\cdots, v_N=g(w_{n_N})_k$, until either $n_N=m$, at which point we have completed the proof,
		or else we reach the point where the remaining segment $w_{n_N}\sim_Z\cdots\sim_Z w_m$ is not in
		the family $\Gamma[n_0]_1(B_{v_N})$. In this case, we see that $g(w_m)_k\sim_Z v_{N}$ or else $g(w_m)_k=v_N$.
		If $g(w_m)_k=v_N$ again we are done. 
		
		 We now focus on this remaining case where $w_{n_N}\sim_Z\cdots\sim_Z w_m$ is not in $\Gamma[n_0]_1(B_{v_N})$
			but at the same time $w(w_m)_k\ne v_N$. 
		Then from~\eqref{n0-condition},
		\begin{align*}
		d(x_{v_{N-1}},&x_{g(w_m)_k})\le d(x_{v_{N-1}}, x_{w_{n_N}})+d(x_{w_{n_N}},x_{v_N})+d(x_{v_N},x_{w_m})+d(x_{w_m},x_{g(w_m)_k})\\
		 <&[2\alpha^{-n_0k}+2\tau\alpha^{-n_0(k+1)}]+[\alpha^{-n_0k}+\alpha^{-n_0(k+1)}]+[2\alpha^{-n_0k}]+[\alpha^{-n_0k}+\alpha^{-n_0(k+1)}]\\
		 =&\alpha^{-n_0k}[6+2(1+\tau)\alpha^{-n_0}]<\tau\alpha^{-n_0k}.
		 \end{align*}
		 It follows that $v_{N-1}\sim_Z g(w_m)_k$; in this case we replace $v_{N-1}\sim_Z v_N$ with the path $v_{N-1}\sim_Z g(w_m)_k$,
		 and note that 
		 \[
		 \ell(v_{N-1}\sim_Zg(w_m)_k)\le \ell(w_{n_{N-1}}\sim_Z\cdots\sim_Z w_{n_{N}}\sim_Z\cdots\sim_Z w_m).
		 \]
		 Now we redefine $v_N$ to be $g(w_m)_k$ to complete the proof.
	\end{proof}
	
	\begin{sublem}\label{lift4}
		Suppose $\gamma=w_0\sim_Z w_1\sim_Z\cdots\sim_Z w_m$ is a path from $v=w_0 \in V[n_0]_{k}$ to $v'=w_m \in V[n_0]_{k}$, 
		such that $w_j\in V[n_0]_{k+1}$ when $1\le j\le m-1$. 
		Then there exists a path $\gamma'=v \sim_Z v_1 \sim_Z \ldots \sim_Zv_M=v'$ whose vertices are all in $V[n_0]_{k}$ and $\ell(\gamma')\le \ell(\gamma)$.	
	\end{sublem}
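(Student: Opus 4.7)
The plan is to split on the relative position of $v$ and $v'$ at level $k$: either $v=v'$, or $v\sim_Z v'$, or neither. The key observation I will reuse is the inequality
\[
\pi^*(g(w)_{n_w-1})\le K_0^2\,\pi^*(w),
\]
recorded just before~\eqref{eq:vert-horiz}, which shows that the $\ell$-cost $K_0^2\,\pi^*(w)$ of the vertical edge $g(w)_{n_w-1}\sim_Z w$ already dominates $\pi^*$ of the parent.

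In the first two cases the desired inequality is essentially free. If $v=v'$, take $\gamma'$ to be the single-vertex path, whose $\ell$-length is $0$. If $v\sim_Z v'$, take $\gamma'$ to be the single edge $v\sim_Z v'$, of $\ell$-length $\min\{\pi^*(v),\pi^*(v')\}\le \pi^*(v)$. Since $v=w_0\sim_Z w_1$ is a vertical edge in $Z[n_0]$ from level $k$ to level $k+1$, and in $Z[n_0]$ each child has a unique parent, we have $g(w_1)_k=v$; the central observation then gives
\[
\pi^*(v)\le K_0^2\,\pi^*(w_1)=\ell(w_0\sim_Z w_1)\le \ell(\gamma),
\]
which settles Case~2.

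In the remaining case, where $v\neq v'$ and $v\not\sim_Z v'$, my plan is to apply Lemma~\ref{lift3} to the level-$(k+1)$ subpath $w_1\sim_Z w_2\sim_Z\cdots\sim_Z w_{m-1}$. That lemma will produce indices $0<n_1<\cdots<n_N=m-1$ and a path $v=v_0\sim_Z v_1\sim_Z\cdots\sim_Z v_N$ at level $k$ with $v_N=g(w_{m-1})_k=v'$ and $\ell(\gamma')\le \ell(w_1\sim_Z\cdots\sim_Z w_{m-1})\le \ell(\gamma)$, finishing the proof. The only nontrivial point is checking that this subpath lies in $\Gamma[n_0]_1(B_v)$. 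The initial condition $B_{w_1}\cap B_v\neq\emptyset$ is immediate from $w_0=v\sim_Z w_1$ being vertical. For the terminal condition $B_{w_{m-1}}\setminus 2B_v\neq\emptyset$, suppose toward a contradiction that $x_{w_{m-1}}\in 2B_v$. Since $v'=g(w_{m-1})_k$ is a vertical neighbor of $w_{m-1}$, we have $d(x_{w_{m-1}},x_{v'})<\alpha^{-n_0 k}+\alpha^{-n_0(k+1)}$; the triangle inequality combined with $d(x_v,x_{w_{m-1}})<2\alpha^{-n_0 k}$ then yields
\[
d(x_v,x_{v'})<\alpha^{-n_0 k}\bigl(3+\alpha^{-n_0}\bigr)<\tau\,\alpha^{-n_0 k},
\]
where the last inequality uses $\tau>6$ from~\eqref{n0-condition}. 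This forces $v=v'$ or $v\sim_Z v'$, contradicting the case hypothesis, so $x_{w_{m-1}}\notin 2B_v$ and hence $B_{w_{m-1}}\setminus 2B_v\neq\emptyset$.

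The main obstacle I expect is precisely this terminal-endpoint check in Case~3: it requires a careful triangle-inequality balance and relies on the quantitative smallness $\alpha^{-n_0}\ll 1$ built into~\eqref{n0-condition}. Once this is in hand, the rest of the proof reduces to a direct invocation of Lemma~\ref{lift3} together with the central observation above.
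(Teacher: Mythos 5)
Your proof is correct and follows essentially the same route as the paper: dispose of the cases $v=v'$ and $v\sim_Z v'$ by comparing the single (degenerate or horizontal) edge with the first vertical edge of $\gamma$ via the inequality behind~\eqref{eq:vert-horiz}, and otherwise apply Lemma~\ref{lift3} to the level-$(k+1)$ subpath $w_1\sim_Z\cdots\sim_Z w_{m-1}$. Your explicit check that this subpath lies in $\Gamma[n_0]_1(B_v)$ (via $x_{w_{m-1}}\notin 2B_v$, using~\eqref{n0-condition}) fills in a step the paper only asserts as ``necessarily,'' and where the paper allows the lifted path to end at a vertex equal to or adjacent to $v'$ and then appends one edge, you simply read Lemma~\ref{lift3} as terminating at $g(w_{m-1})_k=v'$, which is consistent with its statement.
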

	
	\begin{proof}
		Necessarily, $v\sim_Z w_1$ and $v' \sim_Z w_{m-1}$ are vertical edges. If $v \sim_Z v'$, then $\gamma'=v \sim_Z v'$ will satisfy the claim, due to \eqref{eq:vert-horiz}, completing the proof.  So, assume that $v$ and $v'$ are not neighbors.
		In this case, necessarily the path $w_1\sim_Z\cdots\sim_Z w_{m-1}$ is in $\Gamma[n_0]_1(B_v)$. 
		We apply Lemma~\ref{lift3} and find $n^*\leq m$, $N$ and $v_j \in V[n_0]_k, 1\leq j \leq N$ such that the path $\gamma''$ given by $v=g(w_1)_k \sim_Z v_1 \sim_Z \ldots \sim_Z v_{N}=g(w_{n^*})$ satisfies
		\begin{equation*}\label{lifteq1}
			\ell(\gamma'') \leq \ell(w_1 \sim_Z \ldots \sim_Z \sim_Z w_{n^*}),
		\end{equation*}
		and that $v_{N} \sim_Z v'$ or $v_{N}=v'$. In the first case, we set $M=N+1$ and we choose $v_M=v'$, and in the second case we set $M=N$.
		In the second case the conclusion of the lemma is immediate with $\gamma':=\gamma''$. In the first case, note that $v_N\sim_Z v_{M}$, and
		 by~\eqref{eq:vert-horiz}, $\ell(v_{N} \sim_Z v_M) \leq \ell(w_{n^*} \sim_Z \ldots \sim_Z w_m) $.
		Therefore, concatenating $\gamma''$ with the edge $v_{N} \sim_Z v_M$ gives us the desired $\gamma'$.
		
	\end{proof}
	
	\begin{sublem}\label{lift5}
		Suppose $\gamma=w_0\sim_Z w_2\sim_Z\cdots\sim_Z w_m$ is a path from $v=w_0 \in V[n_0]_{k}$ to $v'=w_m \in V[n_0]_{k}$ whose vertices are all in $\bigcup_{n=0}^{k+1}V[n_0]_n$. Then there exists a path $\gamma'$ from $v$ to $v'$ whose vertices all lie in $\bigcup_{n=0}^{k}V[n_0]_n$ and $\ell(\gamma') \leq \ell(\gamma)$.	
	\end{sublem}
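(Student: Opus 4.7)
The proof proposal is to reduce Sublemma~\ref{lift5} to Sublemma~\ref{lift4} by decomposing $\gamma$ into maximal sub-paths according to whether their intermediate vertices sit at level $k+1$ or not. Concretely, I would identify the set of indices
\[
0 = j_0 < j_1 < \cdots < j_s = m
\]
for which $w_{j_i}$ lies in $\bigcup_{n=0}^{k} V[n_0]_n$, that is, the indices at which $\gamma$ is \emph{not} at the deepest allowed level $k+1$. Since $v = w_0$ and $v' = w_m$ are at level $k$, both endpoints are among the $w_{j_i}$. Between two consecutive split points $w_{j_i}$ and $w_{j_{i+1}}$, every intermediate vertex of $\gamma$ lies at level $k+1$.

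The key adjacency observation is that an edge of $Z[n_0]$ between vertices $v_1, v_2$ satisfies $|n_{v_1}-n_{v_2}|\le 1$. Hence if $w_{j_i+1},\ldots,w_{j_{i+1}-1}$ are all at level $k+1$, then necessarily $w_{j_i}$ and $w_{j_{i+1}}$ must be at level $k$ (a lower level could not be a neighbor of a level-$(k+1)$ vertex). Thus every non-trivial sub-segment $w_{j_i}\sim_Z\cdots\sim_Z w_{j_{i+1}}$ satisfies the hypotheses of Sublemma~\ref{lift4}, and we obtain a replacement path $\gamma'_i$ from $w_{j_i}$ to $w_{j_{i+1}}$ whose vertices lie in $V[n_0]_k$ with
\[
\ell(\gamma'_i)\le \ell(w_{j_i}\sim_Z\cdots\sim_Z w_{j_{i+1}}).
\]
For the trivial segments $j_{i+1}=j_i+1$, there is nothing to change: the single edge already has both endpoints in $\bigcup_{n=0}^{k}V[n_0]_n$.

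Finally, I would define $\gamma'$ as the concatenation of all the $\gamma'_i$ (in order), together with the trivial one-edge segments. By construction $\gamma'$ joins $v$ to $v'$ and all its vertices lie in $\bigcup_{n=0}^{k}V[n_0]_n$, and summing the inequalities above across $i$ yields $\ell(\gamma')\le\ell(\gamma)$. The only real subtlety is the adjacency argument that forces the split points $w_{j_i}$ and $w_{j_{i+1}}$ bounding a level-$(k+1)$ excursion to themselves be at level $k$ (so Sublemma~\ref{lift4} applies verbatim); the rest of the argument is a clean concatenation with no constants to track.
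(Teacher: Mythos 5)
Your proposal is correct and follows essentially the same strategy as the paper: decompose $\gamma$ into the maximal level-$(k+1)$ excursions and the remaining low-level segments, apply Sublemma~\ref{lift4} to each excursion, and concatenate. Your observation that the $Z[n_0]$-adjacency relation forces the vertices bounding each excursion to sit exactly at level $k$ (rather than below) is the same key point the paper relies on implicitly, and your bookkeeping via the index set $\{j_i\}$ is a clean way to organize it.
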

	
	\begin{proof}		
	If $\gamma$ already lies in $\bigcup_{n=0}^{k}V[n_0]_n$, then $\gamma'=\gamma$ ends the proof. Otherwise, there exists an integer $q$ with
	$ 1 \le q \le m$ and two sequences $n_j$ and $m_j$ of indices such that
\begin{itemize}
\item  $0\le n_1 < m_1 \leq n_2 < m_2 \le \cdots \leq n_q < m_q \le m$ (with merely $0\le n_1<m_1\le m$ if $q=1$),
\item  for all $1\leq j\le q$ the subpath $\gamma_j:=w_{n_j} \sim_Z \ldots \sim_Z w_{m_j}$ satisfies the assumptions of Sublemma~\ref{lift4}, and
\item the ``complement'' of the union of $\gamma_j$ consists of a finite (possibly empty) number of subpaths $\tilde{\gamma}_i$ of $\gamma$ that have all of their vertices in $\bigcup_{n=0}^{k}V[n_0]_n$ and that each $\tilde{\gamma}_i$ has its endpoints among $\{w_0,w_{n_1},w_{m_1},\cdots,w_{n_q}, w_{m_q},w_m\}$.
\end{itemize}
For each $j$, we apply  Sublemma~\ref{lift4} and replace $\gamma_j$ with a \emph{shorter} path $\gamma_j'$ that has all of its vertices in $V[n_0]_k$ and has the same endpoints as $\gamma$.
				
Clearly, the collection of $\gamma_j', 1\le j \le q$ concatenate with the collection of $\tilde{\gamma}_i$, described above, to form a path $\gamma'$ that satisfies the claim of the lemma.
	\end{proof}
	
	\begin{lem}\label{lift6}
		If $\gamma$ is a path in $Z[n_0]$ between two distinct vertices $v,v'\in V[n_0]_k$, then there exists a path $\gamma'$ between $v$ and $v'$
		so that $\ell(\gamma')\le \ell(\gamma)$ and all the vertices of $\gamma'$ lie in $\bigcup_{n=0}^kV[n_0]_n$
	\end{lem}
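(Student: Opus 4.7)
I would induct on the quantity $L - k$, where $L$ denotes the largest generation index attained by any vertex of $\gamma$. If $L \le k$, the path $\gamma$ itself is already confined to $\bigcup_{n=0}^{k}V[n_0]_n$ and I take $\gamma' = \gamma$. If $L = k+1$, the claim is exactly Sublemma~\ref{lift5}.

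For the inductive step, assume $L \ge k+2$. The plan is to reduce $L$ by one and then invoke the induction hypothesis. The key structural observation is that, since $L$ is the maximum level reached, every vertex of $\gamma$ at level $L$ is $Z[n_0]$-adjacent in $\gamma$ only to vertices at levels $L-1$ or $L$. I would therefore extract from $\gamma$ the maximal subpaths $\eta_1,\ldots,\eta_s$ whose vertices all lie in $V[n_0]_{L-1}\cup V[n_0]_L$ and which contain at least one vertex at level $L$. Since $k<L-1$, the endpoints $v,v'$ of $\gamma$ cannot belong to any $\eta_j$, and maximality together with the edge-level constraint $|n_{w_{i+1}}-n_{w_i}|\le 1$ forces each $\eta_j$ to have both endpoints at level exactly $L-1$.

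Each $\eta_j$ then satisfies the hypothesis of Sublemma~\ref{lift5} with $L-1$ playing the role of $k$. I would replace each $\eta_j$ by the path $\eta_j'$ produced by that sublemma; this new path has the same endpoints as $\eta_j$, has all vertices at level $\le L-1$, and satisfies $\ell(\eta_j')\le\ell(\eta_j)$. The subpaths of $\gamma$ lying between consecutive $\eta_j$'s (and before the first, after the last) contain no level-$L$ vertices, by maximality of the $\eta_j$, and therefore already lie in $\bigcup_{n=0}^{L-1}V[n_0]_n$. Concatenating these untouched segments with the $\eta_j'$ in the original order produces a path $\tilde\gamma$ from $v$ to $v'$ with maximum level at most $L-1$ and $\ell(\tilde\gamma)\le\ell(\gamma)$. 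Applying the inductive hypothesis to $\tilde\gamma$ then yields the desired $\gamma'$.

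The main obstacle I anticipate is purely organizational: one must check that the alternating decomposition of $\gamma$ into the $\eta_j$'s and low-level pieces is unambiguous, that the endpoints match up upon reassembly, and that the $\eta_j$ genuinely fit the hypothesis of Sublemma~\ref{lift5} (in particular that both of their endpoints sit at level $L-1$, which uses $k\le L-2$). Once these bookkeeping points are settled, the length estimate is immediate from summing the per-piece estimates, and the induction terminates after at most $L-k$ steps.
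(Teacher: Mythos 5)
Your proof is correct and fills in the details of the paper's one-sentence argument, which likewise repeatedly invokes Sublemma~\ref{lift5} to strip off the top level one generation at a time. The careful decomposition into maximal $\{L-1,L\}$-level subpaths and the verification that their endpoints land at level $L-1$ (via the unit-step level constraint and the fact that $v,v'$ sit strictly below) is precisely the bookkeeping the paper leaves implicit.
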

	
	\begin{proof}
		By repeatedly (but only finitely many times) applying Sublemma~\ref{lift5} to subcurves whose vertices lie in $\bigcup_{n=k}^\infty V[n_0]_n$,
		the proof is complete.
	\end{proof}

Lemma~\ref{lift6} shows us that to control the lengths of paths connecting $v,v'\in V[n_0]_k$ from below, 
it suffices to consider only the paths
connecting $v$ and $v'$ but staying within level $k$, that is, all the vertices of the path lie in 
$\bigcup_{n=0}^kV[n_0]_n$. We next
show that for such a path, there is a shorter path with vertices in $\bigcup_{n=0}^{k-1}V[n_0]_n$ 
connecting $g(v)_{k-1}$ to $g(v')_{k-1}$.
Since $v,v'$ do not satisfy the conditions listed in Remark~\ref{rem:simples}, we know that $k\ge 2$.

\begin{lem}\label{lift7} 
Suppose $\gamma$ is a path in $Z[n_0]$ between two vertices $v,v'\in V[n_0]_k$  such that all the vertices of 
$\gamma$ lie in $\bigcup_{n=0}^kV[n_0]_n$. Then there exists a path $\gamma'$ from $g(v)_{k-1}$ to $g(v')_{k-1}$  
such that all the vertices of $\gamma$ lie in $\bigcup_{n=0}^{k-1}V[n_0]_n$, and $\ell(\gamma') \leq \ell(\gamma)$.
\end{lem}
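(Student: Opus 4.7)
\medskip

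\noindent\textbf{Proof proposal.} The plan is to decompose $\gamma$ according to its visits to level $k$ and ``lift'' each such visit up to level $k-1$, while leaving the intermediate subpaths (which already lie at level $\le k-1$) untouched. More precisely, write $\gamma = w_0 \sim_Z w_1 \sim_Z \cdots \sim_Z w_m$ with $w_0=v,\, w_m=v'$ in $V[n_0]_k$, and partition the index set $\{0,1,\ldots,m\}$ into maximal blocks on which $n_{w_j}=k$ (call these \emph{peaks}) and blocks on which $n_{w_j}\le k-1$ (call these \emph{valleys}). Because vertical edges in $Z[n_0]$ go only between consecutive generations, every peak is immediately flanked (on the side where it does not meet an endpoint of $\gamma$) by a vertex at level exactly $k-1$, which is the $Z[n_0]$-parent of the adjacent peak-endpoint. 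Valleys will be retained verbatim in $\gamma'$; each peak (together with the bordering vertical edges on whichever sides exist) will be replaced by a path at level $\le k-1$ of no greater $\ell$-length, and these pieces will then be concatenated.

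For an \emph{interior} peak $w_{a-1}\sim_Z w_a\sim_Z\cdots\sim_Z w_b\sim_Z w_{b+1}$, with $w_{a-1}, w_{b+1}\in V[n_0]_{k-1}$ and $w_a,\ldots,w_b\in V[n_0]_k$, the subpath fits the hypotheses of Sublemma~\ref{lift4} applied one generation higher (with $V[n_0]_{k-1}$ playing the role of $V[n_0]_k$ and $V[n_0]_k$ playing the role of $V[n_0]_{k+1}$). That sublemma directly supplies a replacement path at level $k-1$ from $w_{a-1}$ to $w_{b+1}$ with $\ell$-length bounded by the $\ell$-length of the original subpath, and so interior peaks are easy.

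The main obstacle is the \emph{boundary} peaks: the first peak $w_0=v,\ldots,w_{b_1}$ is not flanked on the left by a level-$(k-1)$ vertex (and similarly for the last peak on the right), so Sublemma~\ref{lift4} does not apply verbatim. Here one must handle three subcases for the target $g(v)_{k-1} \leadsto w_{b_1+1}$ (where we set $w_{b_1+1}=g(v')_{k-1}$ in the degenerate case $b_1=m$, i.e.\ when $\gamma$ is a single peak). If $g(v)_{k-1}=w_{b_1+1}$, the replacement is the trivial one-vertex path and contributes $\ell=0$. If $g(v)_{k-1}\sim_Z w_{b_1+1}$, take the single edge; by Sublemma~\ref{lem:short}-type arithmetic its $\ell$-length is bounded by $K_0^2\pi^*(w_{b_1})$, which is exactly the $\ell$-length of the vertical edge $w_{b_1}\sim_Z w_{b_1+1}$ being removed (and in the case $b_1=m$ we similarly use the bound against an edge on the last-peak side, recalling that the hypotheses of Remark~\ref{rem:simples} have already disposed of these endpoint configurations for the overall lemma). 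Otherwise, the contrapositive of Sublemma~\ref{lem:short} yields $x_{w_{b_1}}\notin 2B_{g(v)_{k-1}}$, while $B_v\cap B_{g(v)_{k-1}}\ne\emptyset$ is automatic from $v\sim_Z g(v)_{k-1}$; therefore $w_0\sim_Z\cdots\sim_Z w_{b_1}$ is a path in $\Gamma[n_0]_1(B_{g(v)_{k-1}})$, and Lemma~\ref{lift3} produces a level-$(k-1)$ replacement from $g(v)_{k-1}$ to $g(w_{b_1})_{k-1}=w_{b_1+1}$ with $\ell$-length at most that of the peak itself. The last peak is handled symmetrically.

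Concatenating the lifted first peak, the untouched valleys, the lifted interior peaks, and the lifted last peak gives the desired path $\gamma'$ from $g(v)_{k-1}$ to $g(v')_{k-1}$ whose vertices lie in $\bigcup_{n=0}^{k-1}V[n_0]_n$. Since the $\ell$-length of every replaced piece does not exceed that of the original, summing yields $\ell(\gamma')\le \ell(\gamma)$, completing the proof.
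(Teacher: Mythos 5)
Your decomposition into peaks at level $k$ and valleys at level $\le k-1$ is sound and arrives at the same conclusion, but it is worth pointing out how it relates to the paper's own argument. The paper cuts off only two pieces — the initial excursion $v=v_0\sim_Z\cdots\sim_Z v_{n_1}$ down to the first level-$(k-1)$ vertex, and the symmetric final excursion — treating these exactly as you treat your boundary peaks (via the contrapositive of Sublemma~\ref{lem:short} and then Lemma~\ref{lift3}, with~\eqref{eq:vert-horiz} covering the case $g(v)_{k-1}\sim_Z w_{b_1+1}$). The entire middle section $v_{n_1}\sim_Z\cdots\sim_Z v_{n_2}$, whose endpoints already sit at level $k-1$, is then handled by a single invocation of Lemma~\ref{lift6} at level $k-1$. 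Your version instead unwinds that step: each interior peak is sent down one level by Sublemma~\ref{lift4} (reindexed), and valleys are kept verbatim — which is precisely the mechanism inside Sublemma~\ref{lift5}, the engine of Lemma~\ref{lift6}. So you are effectively inlining the proof of Lemma~\ref{lift6} into the middle of Lemma~\ref{lift7}; it buys you explicitness but repeats machinery that the paper has already packaged. Two small corrections in wording: the bound $\ell\bigl(g(v)_{k-1}\sim_Z w_{b_1+1}\bigr)\le K_0^2\pi^*(w_{b_1})$ is exactly inequality~\eqref{eq:vert-horiz}, not ``Sublemma~\ref{lem:short}-type arithmetic''; and in the degenerate case $b_1=m$ you should simply observe that Remark~\ref{rem:simples} forces you into the ``Otherwise'' subcase (the first two subcases cannot arise), so Lemma~\ref{lift3} applies directly with the whole path playing the role of the single peak.
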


\begin{proof}
By Remark~\ref{rem:simples}, we know that $g(v)_{k-1}\neq g(v')_{k-1}$.
We write $\gamma$ as $v=v_0 \sim_Z v_1 \sim_Z \ldots \sim_Z v_m=v'$. If $\gamma$ completely lies in $V[n_0]_k$, then by Sublemma~\ref{lem:short} (in the case that the curve is not in $\Gamma[n_0]_1(B_{v})$), or
Lemma~\ref{lift3} (in the case that the curve is in $\Gamma[n_0]_1(B_{v})$), as $g(v)_{k-1}\not\sim_Z g(v')_{k-1}$ because of Remark~\ref{rem:simples},
there must exist a path  giving us the desired $\gamma'$.

So, we continue by assuming that $\gamma$ does not lie in $V[n_0]_k$. Let $n_1$ be the largest integer that satisfies: $v_{j} \in V[n_0]_k$, for all $0\leq j <n_1$. Necessarily, $v_{n_1}=g(v_{n_1-1})_{k-1}$. By Sublemma~\ref{lem:short}, together with~\eqref{eq:vert-horiz}, or by
Lemma~\ref{lift3}, either $g(v_0)_{k-1}=v_{n_1}$, or we can find a path from $g(v_0)_{k-1}$ to $v_{n_1}$  
that lies in $V[n_0]_{k-1}$ and is shorter (in $\ell$ distance) than $v_0 \sim_Z v_1 \sim_Z \ldots \sim_Z v_{n_1}$. By analogous argument at the other end of $\gamma$ terminating at $v'$, there exists an intger $n_2\geq n_1$ such that  $v_{j} \in V[n_0]_k$, for all $n_2 < j \leq m$,  $v_{n_2}=g(v_{n_2+1})_{k-1}$, and either $g(v')_{k-1}=v_{n_2}$, or we can find a path from $g(v')_{k-1}$ to $v_{n_2}$ that lies in $V[n_0]_{k-1}$ and is shorter (in $\ell$ distance) than $v_{n_2} \sim_Z \ldots \sim_Z v'$.

If needed, we apply Lemma~\ref{lift6} to replace $v_{n_1}\sim_Z \ldots \sim_Z v_{n_2}$ with a path that lies in  $\bigcup_{n=0}^{k-1}V[n_0]_n$ and is shorter.

Concatenation of the three replacement curves (some possibly degenerate, i.e.\  consisting of just a vertex) gives us the desired $\gamma'$.
\end{proof}

\begin{prop}\label{prop:SameLevel-H3}
Suppose that $v,v'\in V[n_0]_k$ satisfy the condition of Remark~\ref{rem:simples}. Then for each curve $\gamma$ in $Z[n_0]$ with end points
$v,v'$, we have that $\ell(\gamma)\ge K_0^{-3} \pi(z_{v,v'})$.
\end{prop}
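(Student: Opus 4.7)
The plan is to use Lemmas~\ref{lift6} and~\ref{lift7} to reduce $\gamma$ to a short path between the ancestors of $v,v'$ one level above that of $z_{v,v'}$, and then extract the lower bound from a single edge of this reduced path. Set $m:=|z_{v,v'}|$; since $v,v'$ avoid all three configurations of Remark~\ref{rem:simples}, the maximality of $m$ forces $m\le k-2$. First I apply Lemma~\ref{lift6} to $\gamma$, obtaining a path $\gamma_k$ from $v$ to $v'$ whose vertices lie in $\bigcup_{n=0}^{k}V[n_0]_n$ and with $\ell(\gamma_k)\le \ell(\gamma)$. Then I iterate Lemma~\ref{lift7} at levels $j=k,k-1,\ldots,m+2$: at each such $j$ the pair $g(v)_j,g(v')_j$ fails the hypotheses of Remark~\ref{rem:simples}, for otherwise $z_{v,v'}$ would sit at generation $\ge j-1>m$, contradicting maximality. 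After $k-m-1$ iterations I obtain a path $\gamma^{(m+1)}$ from $u:=g(v)_{m+1}$ to $u':=g(v')_{m+1}$ with all vertices at levels $\le m+1$ and $\ell(\gamma^{(m+1)})\le \ell(\gamma)$.

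Since $z_{v,v'}$ has generation $m<m+1$, we have $u\ne u'$ and $u\not\sim_Z u'$, so $\gamma^{(m+1)}$ contains at least one edge $e$ incident to $u$. If $e$ is vertical, it must run from $u$ to its $Z$-parent $g(v)_m$, with $\ell(e)=K_0^{2}\pi^*(u)$; if horizontal, $e=u\sim_Z w$ for some $w\in V[n_0]_{m+1}$ and $\ell(e)=\min\{\pi^*(u),\pi^*(w)\}$. The bound $\pi^*(\cdot)\ge K_0^{-1/2}\pi(\cdot)$ follows from (H2) for $Z[n_0]$ applied to members of $SI(\cdot)$; combining this with another use of (H2) between horizontal $Z$-neighbors gives $\ell(e)\ge K_0^{-1}\pi(u)$ in the worst case (the vertical case is strictly better).

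To finish, I compare $\pi(u)=\rho(u)\pi(g(v)_m)\ge \eta_-\pi(g(v)_m)$ with $\pi(z_{v,v'})$ by case-splitting on the position of $g(v)_m$ relative to $z_{v,v'}$: either $g(v)_m=z_{v,v'}$, or $g(v)_m\sim_Z z_{v,v'}$, or $g(v)_m$ and $z_{v,v'}$ are $Z$-siblings (the last case occurring precisely when $z_{v,v'}=g(v')_m$ is a vertical $Z$-neighbor of $g(v)_{m-1}$, which forces $g(v)_{m-1}=g(v')_{m-1}$). The first two options give $\pi(g(v)_m)\ge K_0^{-1/2}\pi(z_{v,v'})$ by (H2); the third yields $\pi(g(v)_m)/\pi(z_{v,v'})=\rho(g(v)_m)/\rho(z_{v,v'})\ge \eta_-\ge K_0^{-1/2}$, using $K_0\ge \eta_-^{-2}$ from~\eqref{eq:K0}. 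Assembling these, $\pi(u)\ge K_0^{-1}\pi(z_{v,v'})$, and hence $\ell(\gamma)\ge \ell(\gamma^{(m+1)})\ge \ell(e)\ge K_0^{-2}\pi(z_{v,v'})\ge K_0^{-3}\pi(z_{v,v'})$, since $K_0\ge 1$. The main obstacle I anticipate is the $Z$-sibling sub-case, where correctly identifying the common $Z$-parent via $g(v)_{m-1}=g(v')_{m-1}$ from the vertical $Z$-neighbor relation is delicate; once that identification is made, (H1) and (H2) close the estimate, and the remainder is routine iteration of the lifting lemmas together with the edge-length formula~\eqref{eq:def-ell}.
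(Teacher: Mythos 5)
Your argument is correct and follows essentially the same route as the paper: reduce $\gamma$ via Lemma~\ref{lift6} and iterated applications of Lemma~\ref{lift7} down to the level just above $z_{v,v'}$, and then extract the lower bound from the local structure there using (H1)/(H2) and the definition~\eqref{eq:def-ell}; your endgame (one edge incident to $u=g(v)_{m+1}$ plus the three-way comparison of $\pi(g(v)_m)$ with $\pi(z_{v,v'})$, including the sibling case) is a mild variant of the paper's estimate, which instead uses the edges at both endpoints of the reduced path and one more application of (H2), and both give the stated constant.

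One small inaccuracy, which is harmless: your assertion that $m=|z_{v,v'}|\le k-2$ does not follow from Remark~\ref{rem:simples}. It can happen that $g(v)_{k-1}$ and $g(v')_{k-1}$ are distinct, non-$\sim_Z$-adjacent children of a common vertex $g(v)_{k-2}=g(v')_{k-2}$ (two $Z$-children of one parent need not be horizontal neighbors, since $\tau<\tfrac14\alpha^{n_0}$ by~\eqref{n0-condition}); then $g(v)_{k-1}$ is a vertical $Z$-neighbor of a vertex in the other ancestor path and $m=k-1$, while all three bullets of Remark~\ref{rem:simples} still fail. This does not damage your proof: in that case the range $j=k,\dots,m+2$ is empty, $\gamma^{(m+1)}$ is just the Lemma~\ref{lift6}-reduced path from $v$ to $v'$, and the remaining steps (in particular $u\ne u'$, $u\not\sim_Z u'$ by maximality of $m$, and the sibling case of your comparison) go through verbatim, still yielding $\ell(\gamma)\ge K_0^{-2}\pi(z_{v,v'})\ge K_0^{-3}\pi(z_{v,v'})$.
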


\begin{proof}
	An application of Lemma~\ref{lift7} gives a curve $\gamma_1$ with end points $g(v)_{k-1}$, $g(v')_{k-1}$ and $\ell(\gamma)\ge \ell(\gamma_1)$.
By Remark~\ref{rem:simples} we know that $g(v)_{k-1}\ne g(v')_{k-1}$ and that $g(v)_{k-1}\not\sim_Zg(v')_{k-1}$. Thus we can 
apply Lemma~\ref{lift7} to $\gamma_1$ to obtain a curve $\gamma_2$ with endpoints $g(v)_{k-2}$ and $g(v')_{k-2}$. 

If $g(v)_{k-2}=g(v')_{k-2}$ or $g(v)_{k-2}\sim_Z g(v')_{k-2}$, then $z_{v,v'}\in\{g(v)_{k-2}, g(v')_{k-2}\}$, and the 
claim of the proposition holds. If this assumption does not hold, then we can apply Lemma~\ref{lift7} to $\gamma_2$ to obtain 
a curve $\gamma_3$ with end points $g(v)_{k-3}$ and $g(v')_{k-3}$.

We proceed inductively to construct paths $\gamma_j$, $j=1,2,\cdots, m-1$ 
with end points $g(v)_{k-j}$ and $g(v')_{k-j}$ and $\ell(\gamma_j)\le \ell(\gamma)$, such that all the vertices of $\gamma_j$
lie in $\bigcup_{n=0}^{k-j}V[n_0]_n$, 
until we encounter the situation where $g(v)_{k-m}=g(v')_{k-m}$
or $g(v)_{k-m}\sim_Z g(v')_{k-m}$. This inductive process does terminate eventually, and $z_{v,v'}\in\{g(v)_{k-m}, g(v')_{k-m}\}$.

Note that we have $\ell(\gamma)\ge \ell(\gamma_m)$. By the construction of the length function $\ell$, and by 
the condition~(H2) verified earlier for $\pi$, 
we see that
\begin{align*}
\ell(\gamma_m)\ge \frac{1}{K_0^2}\left[\pi(g(v)_{k-m+1})+\pi(g(v')_{k-m+1})\right]
&\ge \frac{1}{K_0^3}\, \left[\pi(g(v)_{k-m})+\pi(g(v')_{k-m})\right] \\
&\ge \frac{1}{K_0^3}\, \max\{\pi(g(v)_{k-m}),\, \pi(g(v')_{k-m})\}\\
&\ge \frac{1}{K_0^3}\, \pi(z_{v,v'}).
\end{align*}
This completes the proof of the proposition; recall that $K_0$ is from~\eqref{eq:K0}.
\end{proof}

Having considered the situation where the end points of $\gamma$ lie in the same level $V[n_0]_k$, we now turn our attention to 
the situation where the end points of $\gamma$ lie in two different levels. Recall that we continue the assumption outlined in
Remark~\ref{rem:simples}. To remind ourselves that the two vertices are of different levels, we denote them by $v,w$ rather than $v,v'$.

\begin{sublem}\label{lift8}
Suppose $\gamma$ is a path in $Z[n_0]$ between two vertices $v,w\in V[n_0]$ with $k=n_v > n_{w}$. Then  
there exists a path $\gamma'$ from $g(v)_{k-1}$ to $w$ such that all the vertices of $\gamma'$ lie in $\bigcup_{n=0}^{k-1}V[n_0]_n$ and $\ell(\gamma')\leq \ell(\gamma)$.
\end{sublem}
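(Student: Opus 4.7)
The plan is to decompose $\gamma$ by depth into maximal ``high'' runs at levels $\ge k$ and ``low'' runs at levels $\le k-1$, and then replace each high run by a path at levels $\le k-1$ using Lemmas~\ref{lift6} and~\ref{lift7}; the low runs will be kept unchanged.

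Writing $\gamma = v_0 \sim_Z v_1 \sim_Z \cdots \sim_Z v_m$ with $v_0 = v$ and $v_m = w$, I would identify the maximal index runs $[a_j, b_j]$, $0 \le j \le r$, on which $n_{v_i} \ge k$. Necessarily $a_0 = 0$ since $n_v = k$, and because $n_w < k$ the last such run is followed by further vertices at levels $\le k - 1$, so $b_r < m$. The neighbor rule in $Z[n_0]$ forces $n_{v_{a_j}} = n_{v_{b_j}} = k$ (an endpoint at a higher level would have no admissible neighbor at level $\le k - 1$), and whenever a flanking vertex $v_{a_j - 1}$ (for $j \ge 1$) or $v_{b_j + 1}$ exists, it lies at level $k - 1$ and equals the unique $Z[n_0]$-parent $g(v_{a_j})_{k-1}$ or $g(v_{b_j})_{k-1}$, respectively.

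For each excursion $S_j = v_{a_j} \sim_Z \cdots \sim_Z v_{b_j}$ I would first apply Lemma~\ref{lift6} (the trivial case $v_{a_j} = v_{b_j}$ aside) to obtain a path $S_j'$ with the same endpoints but vertices confined to $\bigcup_{n=0}^{k} V[n_0]_n$ and $\ell(S_j') \le \ell(S_j)$. Then I would feed $S_j'$ into Lemma~\ref{lift7} to obtain a path $R_j$ from $g(v_{a_j})_{k-1}$ to $g(v_{b_j})_{k-1}$, confined to $\bigcup_{n=0}^{k-1} V[n_0]_n$, with $\ell(R_j) \le \ell(S_j)$. For $j = 0$ the left endpoint of $R_0$ is $g(v_{a_0})_{k-1} = g(v)_{k-1}$, the required starting vertex; for $j \ge 1$ the endpoints of $R_j$ match exactly the flanking vertices $v_{a_j - 1}$ and $v_{b_j + 1}$ of the excursion in $\gamma$. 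Concatenating the $R_j$'s with the already-low interludes $v_{b_j+1} \sim_Z \cdots \sim_Z v_{a_{j+1}-1}$ and with the terminal low tail $v_{b_r+1} \sim_Z \cdots \sim_Z v_m$ produces $\gamma'$ from $g(v)_{k-1}$ to $w$ with all vertices in $\bigcup_{n=0}^{k-1} V[n_0]_n$. The bound $\ell(\gamma') \le \ell(\gamma)$ holds because each $R_j$ is no longer than $S_j$, while we additionally drop from the tally the vertical flanks $v_{a_j - 1} \sim_Z v_{a_j}$ (for $j \ge 1$) and $v_{b_j} \sim_Z v_{b_j + 1}$.

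The hard part will be the case handling when $g(v_{a_j})_{k-1} = g(v_{b_j})_{k-1}$ or $g(v_{a_j})_{k-1} \sim_Z g(v_{b_j})_{k-1}$, since the proof of Lemma~\ref{lift7} tacitly invokes Remark~\ref{rem:simples} to exclude those configurations. In those degenerate cases $R_j$ must be written down by hand as either a single vertex or a single edge, and its $\ell$-length compared directly to $S_j$ using Sublemma~\ref{lem:short} (to argue that near-coincidence is forced) together with~\eqref{eq:vert-horiz} (to absorb a vertical flank by a horizontal edge at the lower level). The initial excursion $j = 0$ is the most delicate of these, because no predecessor edge in $\gamma$ is available to dominate $\ell(R_0)$ and the comparison must be drawn purely against $\ell(S_0)$.
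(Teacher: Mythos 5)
Your approach is correct, but it decomposes the path into every maximal high excursion, whereas the paper makes a single cut. The paper locates the last index $N$ at which $\gamma$ sits at level $\ge k$ (which forces $n_{v_N}=k$ and $v_{N+1}=g(v_N)_{k-1}$), applies Lemma~\ref{lift6} once to the whole prefix $v_0\sim_Z\cdots\sim_Z v_N$ --- Lemma~\ref{lift6} does not care that this prefix may dip below level $k$, so it automatically absorbs all of your low interludes --- and then applies Lemma~\ref{lift7} once, concatenating the result with the already-low tail $v_{N+1}\sim_Z\cdots\sim_Z v_m$. Your finer decomposition works, but it forces the degenerate-case analysis (parents at level $k-1$ coinciding or being $\sim_Z$-neighbors) to be repeated once per excursion, whereas the paper faces it exactly once. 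The degenerate cases you flag are precisely those the paper handles, using~\eqref{eq:vert-horiz} together with the dropped vertical flank $v_N\sim_Z v_{N+1}$; and your observation that $j=0$ is delicate because it has no left flank is the same observation the paper makes, resolved the same way, since the right flank $v_N\sim_Z v_{N+1}$ (your $v_{b_0}\sim_Z v_{b_0+1}$) is still available to dominate any new horizontal edge at level $k-1$ via~\eqref{eq:vert-horiz}. Both routes are valid; the paper's single-cut version is simply shorter because one application of Lemma~\ref{lift6} on the whole prefix does all the interleaving bookkeeping for you.
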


\begin{proof}
Write  $\gamma$ as $v=v_0 \sim_Z v_1 \sim_Z \ldots \sim_Z v_m=w$ and recall 
from Remark~\ref{rem:simples} that $g(v)_{k-1}\not\sim_Z w$ and $g(v)_{k-1}\neq w$. 
Let $N$ be the smallest 
integer that satisfies: $n_{v_j} < k$ for all $N < j \leq m$. Necessarily, $v_{N+1}=g(v_N)_{k-1}$. 
An application of Lemma~\ref{lift6} to the segment $v_0\sim_Z\cdots v_N$ allows us to replace this segment with a shorter path
that lies entirely in $\bigcup_{n=1}^k V[n_0]_n$. Thus, for the remainder of the proof, we will assume that $\gamma$ lies entirely in
$\bigcup_{n=1}^k V[n_0]_n$.
We now apply Lemma~\ref{lift7} to conclude that either $g(v)_{k-1}\sim_Z v_{N+1}$ or $g(v)_{k-1}=v_{N+1}$,
or else we can
find a path $\gamma''$ from $g(v)_{k-1}$ to $v_{N+1}$ that lies in $\bigcup_{n=0}^{k-1}V[n_0]_n$ and is shorter than $v=v_0 \sim_Z v_1 \sim_Z \ldots \sim_Z v_N$. 
If $g(v)_{k-1}=v_{N+1}$, then we can set $\gamma'$ to be the path $v_{N+1}  \sim_Z \ldots \sim_Z v_m=w$. If $g(v)_{k-1}\sim_Z v_{N+1}$, then
we can set $\gamma'$ to be the concatenation of $v_{N+1}  \sim_Z \ldots \sim_Z v_m=w$ with the edge-path $g(v)_{k-1}\sim_Z v_{N+1}$; in
this case, \eqref{eq:vert-horiz} together with the fact that the edge $v_N\sim_Z v_{N+1}$ is part of $\gamma$, tells us that $\ell(\gamma')<\ell(\gamma)$. If neither of these two options occur, then by Lemma~\ref{lift7} we know that the length (in $\ell$)
of the concatenation of $\gamma''$ and $v_{N+1}  \sim_Z \ldots \sim_Z v_m=w$ is shorter than that of $\gamma$.
\end{proof}

\begin{lem}\label{lift9}
Suppose $\gamma$ is a path in $Z[n_0]$ between vertices $v,w\in V[n_0]$ with $k=n_v > n_{w}=k'$. Then either  
$g(v)_{k'}=w$, or $g(v)_{k'}\sim_Z w$ (in which case $k'\le k-2$ because of Remark~\ref{rem:simples}), 
or there exists a path $\gamma'$ from $g(v)_{k'}$ to $w$ such that all the vertices of $\gamma'$ lie in $\bigcup_{n=0}^{k'}V[n_0]_n$ and $\ell(\gamma')\leq \ell(\gamma)$.
\end{lem}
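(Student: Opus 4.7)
The plan is to prove Lemma~\ref{lift9} by induction on the level difference $k-k'$, using Sublemma~\ref{lift8} as the one-step lift. The key technical issue, which is the main obstacle, is to verify that the hypotheses of Sublemma~\ref{lift8} (namely $g(v)_{k-1}\neq w$ and $g(v)_{k-1}\not\sim_Z w$) are automatically satisfied whenever we are not already in one of the two trivial alternatives of Lemma~\ref{lift9}.

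For the base case $k-k'=1$, the three alternatives already match the trichotomy of Sublemma~\ref{lift8}: either $g(v)_{k'}=g(v)_{k-1}=w$, or $g(v)_{k'}=g(v)_{k-1}\sim_Z w$, or else the hypotheses of Sublemma~\ref{lift8} are satisfied and it produces a path $\gamma'$ from $g(v)_{k'}=g(v)_{k-1}$ to $w$ in $\bigcup_{n=0}^{k-1}V[n_0]_n=\bigcup_{n=0}^{k'}V[n_0]_n$ with $\ell(\gamma')\le\ell(\gamma)$.

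For the inductive step, assume $k-k'\ge 2$ and that the lemma holds for all smaller level differences. Suppose neither $g(v)_{k'}=w$ nor $g(v)_{k'}\sim_Z w$, since in those cases there is nothing further to prove. First I verify the hypotheses of Sublemma~\ref{lift8} for the pair $(v,w)$. Since $n_{g(v)_{k-1}}=k-1>k'=n_w$, we have $g(v)_{k-1}\ne w$. If $g(v)_{k-1}\sim_Z w$, then, because this would be a vertical $Z$-edge with the lower endpoint being the unique $Z$-parent, it would force $k-1-k'=1$ (so $k'=k-2$) and $w=g(g(v)_{k-1})_{k-2}=g(v)_{k-2}=g(v)_{k'}$, contradicting our assumption $g(v)_{k'}\neq w$. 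Hence $g(v)_{k-1}\not\sim_Z w$ as well, and Sublemma~\ref{lift8} applies, yielding a path $\gamma_1$ from $g(v)_{k-1}$ to $w$ whose vertices lie in $\bigcup_{n=0}^{k-1}V[n_0]_n$ and with $\ell(\gamma_1)\le\ell(\gamma)$.

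Now apply the inductive hypothesis to the pair $(g(v)_{k-1},w)$, whose level difference is $(k-1)-k'<k-k'$. Since $g(g(v)_{k-1})_{k'}=g(v)_{k'}$, our standing assumption guarantees that neither of the two trivial alternatives holds for this pair either, so the inductive hypothesis produces a path $\gamma'$ from $g(v)_{k'}$ to $w$ with all vertices in $\bigcup_{n=0}^{k'}V[n_0]_n$ and $\ell(\gamma')\le\ell(\gamma_1)\le\ell(\gamma)$. This completes the induction, and hence the proof.
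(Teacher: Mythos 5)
Your proof is correct and takes essentially the same route as the paper: an inductive (the paper: iterative) descent one level at a time via Sublemma~\ref{lift8}, stopping as soon as one of the two trivial alternatives occurs. Your explicit verification that $g(v)_{k-1}\neq w$ and $g(v)_{k-1}\not\sim_Z w$ whenever the trivial alternatives fail (so that the Remark~\ref{rem:simples}-type hypotheses implicit in Sublemma~\ref{lift8} are met) is carried out correctly and is in fact handled more carefully than in the paper's own write-up.
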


\begin{proof}
We begin by applying Sublemma~\ref{lift8} to $\gamma$ to see that there exists a path $\gamma_1$ from $g(v)_{k-1}$ to $w$ that lies in $\bigcup_{n=0}^{k-1}V[n_0]_n$ with $\ell(\gamma_1)\leq \ell(\gamma)$. 

If $k-1=k'$, then we set $\gamma'=\gamma_1$ and it completes the proof. So, we assume $k-2\geq k'$. 
In this case, we may have $g(v)_{k'}=w$, or $g(v)_{k'}\sim_Z w$, in which case the claim in the lemma holds. 
Otherwise, we apply Sublemma~\ref{lift8} to the path $\gamma_1$ (which satisfies the assumptions with $g(v)_{k-1}$ in role of $v$) and conclude that either $g(v)_{k-2}=w$, or $g(v)_{k-2}\sim_Z w$, or there exists a path $\gamma_2$ from $g(v)_{k-2}$ to $w$ that lies in  $\bigcup_{n=0}^{k-2}V[n_0]_n$ with $\ell(\gamma_2)\leq \ell(\gamma_1)$. If the first two cases hold, the claim of the lemma holds. In the latter case, if $k-2=k'$, we set $\gamma'=\gamma_2$ and the proof is again complete; since $\ell(\gamma') \leq \ell(\gamma_1) \leq \ell(\gamma)$.

We continue inductively until (in finite time) we end up with $k-M=k'+1$: either $g(v)_{k'+1} \sim_Z w$, or a path $\gamma_M$ from $g(v)_{k'+1}$ to $w$ that lies in $\bigcup_{n=0}^{k'+1}V[n_0]_n$ with $\ell(\gamma_M)\leq \ell(\gamma)$. Now, one last application of Sublemma~\ref{lift8} to $\gamma_M$ gives: either $g(v)_{k'} = w$, or $g(v)_{k'}\sim_Z w$, or there exists a path $\gamma_{M+1}$ from $g(v)_{k'}$ to $w$ that lies in $\bigcup_{n=0}^{k'}V[n_0]_n$ with $\ell(\gamma_{M+1})\leq \ell(\gamma_M)$. The claim of the lemma clearly holds in the first two cases. If the latter case occurs, we set $\gamma'=\gamma_{M+1}$ and it satisfies the claim of the lemma thanks to $\ell(\gamma_{M+1}) \leq \ell(\gamma_M) \leq \ell(\gamma)$.
\end{proof}

\begin{prop}\label{prop:not-same-level}
Suppose that $v\in V[n_0]_k$ and $w\in V[n_0]_{k'}$ with $k'\le k-1$, does not satisfy any of the conditions listed in Remark~\ref{rem:simples}.
Let $\gamma$ be a path in $Z[n_0]$ with end points $v,w$. Then
$\ell(\gamma)\ge K_0^{-3}\, \pi(z_{v,w})$.
\end{prop}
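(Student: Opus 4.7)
The strategy is to invoke Lemma~\ref{lift9} on $\gamma$, which produces three cases. Before analyzing them I would first observe that in every scenario $z_{v,w}$ lies at generation at most $k'+1$: in $Z[n_0]$ the vertical ancestral paths from $v_0$ to $v$ and to $w$ are unique, and since the latter terminates at generation $k'$, no vertex of the former at generation strictly greater than $k'+1$ can be $\sim_Z$-adjacent (or equal) to any vertex of the latter, because $\sim_Z$-adjacency only spans consecutive generations.

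If Lemma~\ref{lift9} yields $g(v)_{k'}=w$ or $g(v)_{k'}\sim_Z w$, then by Remark~\ref{rem:simples} $k'\le k-2$, and one checks that $z_{v,w}=g(v)_{k'+1}$ in the first subcase while $z_{v,w}\in\{g(v)_{k'},w\}$ in the second. In both subcases I would iterate Sublemma~\ref{lift8} exactly as in the proof of Lemma~\ref{lift9} but stopping one step earlier, producing a path $\tilde\gamma$ from $g(v)_{k'+1}$ to $w$ with all vertices in $\bigcup_{n=0}^{k'+1}V[n_0]_n$ and $\ell(\tilde\gamma)\le \ell(\gamma)$. Because $\tilde\gamma$ has distinct endpoints at different levels, it contains at least one edge incident to $g(v)_{k'+1}$: that edge is either horizontal at level $k'+1$ (of $\ell$-length at least $K_0^{-2}\pi(g(v)_{k'+1})$, using $\pi^*\ge K_0^{-1}\pi$ via~(H2)) or vertical up to $g(v)_{k'}$ (of $\ell$-length $K_0^2\pi^*(g(v)_{k'+1})\ge K_0\pi(g(v)_{k'+1})$). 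Either way, $\ell(\tilde\gamma)\ge K_0^{-2}\pi(g(v)_{k'+1})$, which is already the desired estimate in the first subcase and becomes $\ell(\gamma)\ge K_0^{-3}\pi(z_{v,w})$ in the second after one further application of~(H2) between generations $k'+1$ and $k'$.

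In the remaining case, Lemma~\ref{lift9} supplies a path $\gamma'$ from $g(v)_{k'}$ to $w$ lying in $\bigcup_{n=0}^{k'}V[n_0]_n$ with $\ell(\gamma')\le\ell(\gamma)$, and the initial observation shows that $z_{v,w}$ lies at generation $\le k'$ and hence coincides with $z_{g(v)_{k'},w}$. If $g(v)_{k'}$ and $w$ fail all three easy conditions of Remark~\ref{rem:simples}, Proposition~\ref{prop:SameLevel-H3} applied to $\gamma'$ closes the argument. Otherwise $g(v)_{k'-1}=g(w)_{k'-1}$ or $g(v)_{k'-1}\sim_Z g(w)_{k'-1}$, and $z_{v,w}\in\{g(v)_{k'},w,g(v)_{k'-1},g(w)_{k'-1}\}$, each of whose $\pi$-values is within a factor of $K_0$ of $\pi(g(v)_{k'})$ by~(H2); a first-edge estimate on $\gamma'$ at the endpoint $g(v)_{k'}$ parallel to the previous paragraph then finishes this subcase. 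The main obstacle is bookkeeping of constants — each invocation of~(H2) costs a factor of $K_0$, so landing on $K_0^{-3}$ (rather than $K_0^{-4}$) requires carefully limiting the number of such invocations per branch — together with verifying the coincidence of the $z$-vertices in the third case, which needs a careful read of the definition of $z$.
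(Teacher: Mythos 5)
Your overall route is the same as the paper's: split according to the trichotomy of Lemma~\ref{lift9}, handle the generic branch by projecting to a path $\gamma'$ from $g(v)_{k'}$ to $w$ and invoking Proposition~\ref{prop:SameLevel-H3} after identifying $z_{v,w}=z_{g(v)_{k'},w}$, and handle the remaining branches by a single-edge estimate combined with (H2). Your justification that $z_{v,w}$ sits at generation at most $k'+1$ (and at most $k'$ when $g(v)_{k'}\ne w$ and $g(v)_{k'}\not\sim_Z w$) is correct and in fact cleaner than the paper's remark at that point; likewise your observation that $z_{v,w}$ can be $g(v)_{k'+1}$ when $w=g(v)_{k'}$ is a case the paper's statement ``$z_{v,w}\in\{g(v)_{k'},w\}$'' glosses over (harmlessly, since $\rho<1$ makes $\pi$ decrease along descending paths, so $\pi(g(v)_{k'+1})\le\pi(w)$).

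Two caveats. First, in the branch $g(v)_{k'}=w$ or $g(v)_{k'}\sim_Z w$ you re-run the Sublemma~\ref{lift8} iteration ``stopping one step earlier''; but when $w=g(v)_{k'}$ the final application of Sublemma~\ref{lift8} (top vertex $g(v)_{k'+2}$) is made in a regime where the standing assumption recalled at the start of its proof fails, since then $g(v)_{k'+1}\sim_Z w$. This is repairable (the recalled facts are not actually used in that proof), but as written you are citing the sublemma outside its stated scope. The paper avoids the surgery entirely: since $\gamma$ terminates at $w$, its last edge already gives $\ell(\gamma)\ge K_0^{-1}\pi(w)\ge K_0^{-1}\pi(z_{v,w})$ in both subcases, which is simpler and also what it uses in the exceptional sub-branch of the generic case (last-edge at $w$ rather than your first-edge at $g(v)_{k'}$). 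Second, the constants: with the crude bounds you quote ($\pi^*\ge K_0^{-1}\pi$ and a factor $K_0$ per horizontal comparison), the sub-branches in which $z_{v,w}=w$ only yield $K_0^{-4}\pi(z_{v,w})$; to land on $K_0^{-3}$ as stated you need the sharper horizontal comparability $K_0^{-1/2}\pi(v_1)\le\pi(v_2)\le K_0^{1/2}\pi(v_1)$ recorded after~\eqref{eq:K0}, or else accept a worse power, which is harmless for Corollary~\ref{cor:h3-on-z} but does not literally prove the displayed inequality. With those two repairs your argument is complete and matches the paper's in substance.
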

\begin{proof}
	If either $g(v)_{k'}=w$ or $g(v)_{k'}\sim_Z w$, then we have that $z_{v,w}\in\{g(v)_{k'},w\}$, and as $\gamma$ does end at $w$,
	necessarily we must have that 
	\[
	\ell(\gamma)\ge \frac{1}{K_0}\, \pi(w)\ge \frac{1}{K_0^2}\max\{\pi(w),\pi(g(v)_{k'})\}\ge \frac{1}{K_0^2}\pi(z_{v,w}).
	\]
	Recall the definition of $K_0$ from~\eqref{eq:K0} and the remark right after the definition of $K_0$ which pointed out 
	the comparison of the $\pi$-values of vertices that are \emph{horizontal} neighbors.
	For the remainder of this proof we will assume that neither of the above cases occurs. Then
	by Lemma~\ref{lift9} above, there is a curve $\gamma'$, with vertices in $\bigcup_{n=0}^{k'}V[n_0]_n$ and end points $w$,
	$g(v)_{k'}$, such that $\ell(\gamma')\le \ell(\gamma)$.
	
	If $g(v)_{k'}, w$ do satisfy any of the conditions listed in Remark~\ref{rem:simples}, then either $g(v)_{k'-1}=g(w)_{k'-1}$ or 
	$g(v)_{k'-1}\sim_Zg(w)_{k'-1}$, in which case we have that 
	\[
	\min\{\pi(g(v)_{k'-1},\pi(g(w)_{k'-1})\}\ge K_0^{-1}\max \{\pi(g(v)_{k'-1},\pi(g(w)_{k'-1})\},
	\]
	and so
	\[
	\ell(\gamma)\ge \ell(\gamma')\frac{1}{K_0}\, \pi(w)\ge\frac{1}{K_0^3}\max\{\pi(g(v)_{k'-1},g(w)_{k'-1})\}\ge \frac{1}{K_0^3}\, \pi(z_{v,w}).
	\]
If $g(v)_{k'}, w$ do not satisfy any of the conditions of Remark~\ref{rem:simples}, then we can apply 
Proposition~\ref{prop:SameLevel-H3} to $\gamma'$ and see that 
\[
\ell(\gamma)\ge \ell(\gamma')\ge \frac{1}{K_0^3}\, \pi(z_{g(v)_{k'},w}).
\]
Finally, note that $z_{v,w}=z_{g(v)_{k'},w}$ because  $g(v)_{k'}, w$ do satisfy the conditions of Remark~\ref{rem:simples} and by construction.
This completes the proof of the proposition.
\end{proof}

\begin{cor}\label{cor:h3-on-z}
 Condition (H3) holds on $Z[n_0]$. Namely, for any path $\gamma$ in $Z[n_0]$ with end points $v,w$, we have $\ell(\gamma)\ge K_0^{-6}\, \pi(z_{v,w})$.
\end{cor}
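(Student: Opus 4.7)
The plan is to assemble the conclusions of Propositions~\ref{prop:SameLevel-H3} and~\ref{prop:not-same-level} with short direct arguments for the easy configurations listed in Remark~\ref{rem:simples}, and then to convert the resulting $\ell$-length estimate into the integral estimate required by Condition~(H3).

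First I would dispose of the three simple configurations from Remark~\ref{rem:simples}. In each of these the meeting vertex $z_{v,w}$ lies in $\{v,w,g(v)_{n_v-1},g(w)_{n_w-1}\}$, so Condition~(H2) for $Z[n_0]$, already verified with constant $K_0$, tells us that $\pi(z_{v,w})$ is comparable to $\pi(v)$ and to $\pi(w)$ within a factor of $K_0^2$. A direct consequence of~(H2) applied to the sibling set $SI(\cdot)$ is $\pi^*(\cdot)\ge K_0^{-1}\pi(\cdot)$, which together with~\eqref{eq:def-ell} yields the edge bound $\ell(e)\ge K_0^{-1}\min\{\pi(u_1),\pi(u_2)\}$ for every edge $e=u_1\sim_Z u_2$. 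Consequently, any path $\gamma$ staying at or above the endpoint level already satisfies $\ell(\gamma)\ge K_0^{-3}\pi(z_{v,w})$. If instead $\gamma$ descends into deeper levels, Lemma~\ref{lift6} or Lemma~\ref{lift9} replaces $\gamma$ by an $\ell$-shorter path at or above the endpoint level, reducing to the previous subcase.

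Next I would dispatch the remaining (non-simple) situation by applying Proposition~\ref{prop:SameLevel-H3} when $v$ and $w$ share a level, and Proposition~\ref{prop:not-same-level} otherwise, each of which supplies the stronger bound $\ell(\gamma)\ge K_0^{-3}\pi(z_{v,w})$. Since $K_0>1$, this is dominated by the claimed $K_0^{-6}\pi(z_{v,w})$, so the $\ell$-length assertion of the corollary is established.

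Finally I would upgrade the $\ell$-length estimate to the integral statement required by~(H3). Because $\pi$ is linear on each unit edge $e=u_1\sim_Z u_2$, we have $\int_e\pi\, ds=\tfrac12(\pi(u_1)+\pi(u_2))$; comparison with~\eqref{eq:def-ell}, together with~(H2) used to pass between parent/child and between $\pi$ and $\pi^*$, yields an absolute constant $c=c(K_0)>0$ with $\int_e\pi\, ds\ge c\,\ell(e)$. Summing over the edges of $\gamma$ gives $\int_\gamma\pi\, ds\ge c\,\ell(\gamma)\ge c\,K_0^{-6}\pi(z_{v,w})$, which is Condition~(H3) on $Z[n_0]$ with $K_1=c^{-1}K_0^6$. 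The main obstacle is simply bookkeeping the constants uniformly across the simple cases, but the $K_0^{-6}$ slack in the statement makes this routine.
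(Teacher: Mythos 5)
Correct, and essentially the paper's own argument: you dispose of the exceptional configurations of Remark~\ref{rem:simples} directly, invoke Propositions~\ref{prop:SameLevel-H3} and~\ref{prop:not-same-level} for all remaining pairs to get $\ell(\gamma)\ge K_0^{-3}\,\pi(z_{v,w})$, and then pass from $\ell$-length to $\int_\gamma\pi\,ds$ edge by edge using \eqref{eq:def-ell} and Condition~(H2), which is exactly the paper's inequality \eqref{ell-to-pi}. The only blemish is in your handling of the exceptional cases: Lemma~\ref{lift9} (through Sublemma~\ref{lift8}) is stated under the standing assumption that the endpoints are \emph{not} in those configurations, so it should not be cited there; but the detour is unnecessary anyway, since every path from $v$ to $w$ contains an edge incident to $v$, and your own edge bound combined with (H2) already yields $\ell(\gamma)\ge K_0^{-2}\,\pi(v)\ge K_0^{-4}\,\pi(z_{v,w})$ for any such path, descending or not, which comfortably beats the claimed $K_0^{-6}$ and only perturbs the final constant $K_1$ in (H3).
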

\begin{proof}
Let $v,w \in V[n_0]$ be arbitrary  and let $\gamma$ be a path in  $Z[n_0]$ that connects them. If $v$ and $w$ satisfy one of the exceptional cases of Remark~\ref{rem:simples}, then either $\gamma$ contains $z_{v,w}$, or it contains a vertex that is a neighbor of $z_{v,w}$. In either case, it follows that $\int_\gamma \pi \, ds \geq \frac{1}{K_0^2}\pi(z_{v,w})$.

So, in the rest of proof, assume $v$ and $w$ satisfy the condition of Remark~\ref{rem:simples}. The by Propositions~\ref{prop:SameLevel-H3} and \ref{prop:not-same-level}, we know that $\ell(\gamma)\ge K_0^{-3}\, \pi(z_{v,w})$.

Hence, the proof will be complete once we prove that for any path $\gamma$,
\begin{equation}\label{ell-to-pi}
{K_0^{-3}}\ell(\gamma) \leq \int_\gamma \pi \, ds.
\end{equation}
Toward verifying \eqref{ell-to-pi}, it is easy to see 
from the definition of $\ell$ given in~\eqref{eq:def-ell} 
that for horizontal edges $e=v\sim_Z v'$, we have $\ell(e)\le\min\{\pi(v),\pi(v')\} \leq \int_e \pi \,ds$. 
For vertical edges  $e=v\sim_Z v'$, without loss of generality we assume that $n_{v'}=n_v+1$; then by~\eqref{eq:def-ell} and condition (H2), 
\[
\ell(e) =K_0^2\pi^*(v')\le K_0^3\, [\pi(v)+\pi(v')]/2= K_0^3\int_e \pi \,ds.
\]
By adding these inequalities over all edges of $\gamma$, we obtain \eqref{ell-to-pi}. Thus, the proof of corollary is complete.
\end{proof}

\begin{thm}
 Condition (H3) holds on $G[n_0]$.  Namely, for any path $\gamma$ in $G[n_0]$ with end points $v,w$, we have 
 \[
 \int_\gamma\pi\, ds\ge \frac{1}{K_0^6\,(K_0+1)}\, \pi(z_{v,w}).
 \]
\end{thm}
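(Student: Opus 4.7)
The plan is to reduce the verification of (H3) on $G[n_0]$ to Corollary~\ref{cor:h3-on-z}, which already supplies (H3) for the subgraph $Z[n_0]$, by replacing each edge of a $G[n_0]$-path that is absent from $Z[n_0]$ by a short detour in $Z[n_0]$. Given any path $\gamma=v_0\sim v_1\sim\cdots\sim v_m$ in $G[n_0]$ with endpoints $v=v_0$ and $w=v_m$, the edges of $\gamma$ not shared with $Z[n_0]$ are (by the construction in Subsection~\ref{Sub:Z}) necessarily vertical. For such a bad edge $e=v_i\sim v_{i+1}$ with $n_{v_{i+1}}=n_{v_i}+1$, Lemma~\ref{lem:vertical-neigbr-parent} furnishes the auxiliary vertex $u:=g(v_{i+1})_{n_{v_i}}$, which is a $Z[n_0]$-horizontal neighbor of $v_i$; since $u$ is also the tree-parent of $v_{i+1}$, we have $u\sim_Z v_{i+1}$ as well. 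I would then swap out $e$ for the two-edge detour $v_i\sim_Z u\sim_Z v_{i+1}$, leaving the good edges of $\gamma$ untouched, to obtain a path $\gamma'$ in $Z[n_0]$ joining $v$ to $w$.

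The central estimate is that this substitution is cheap: the detour integral obeys
\[
\int_{v_i\sim_Z u}\pi\,ds+\int_{u\sim_Z v_{i+1}}\pi\,ds
   =\frac{\pi(v_i)+\pi(u)}{2}+\frac{\pi(u)+\pi(v_{i+1})}{2}
   =\int_e\pi\,ds+\pi(u).
\]
Since $u$ is a $G[n_0]$-neighbor of $v_i$ (horizontally) and of $v_{i+1}$ (vertically), the already-verified Condition~(H2) on $G[n_0]$ yields $\pi(u)\le K_0\min\{\pi(v_i),\pi(v_{i+1})\}\le K_0\cdot\tfrac{\pi(v_i)+\pi(v_{i+1})}{2}=K_0\int_e\pi\,ds$, whence the detour costs at most $(K_0+1)\int_e\pi\,ds$. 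Summing over all edges of $\gamma$ (and invoking $1\le K_0+1$ on the unreplaced good ones) produces
\[
\int_{\gamma'}\pi\,ds\le (K_0+1)\int_\gamma\pi\,ds.
\]

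Taking as the descending paths in $G[n_0]$ the unique vertical paths of $Z[n_0]$ (which remain valid descending paths in $G[n_0]$ since $Z[n_0]\subset G[n_0]$), the vertex $z_{v,w}$ assigned to the pair $(v,w)$ is the same for both graphs. Applying Corollary~\ref{cor:h3-on-z} to $\gamma'$ then gives $\int_{\gamma'}\pi\,ds\ge K_0^{-6}\,\pi(z_{v,w})$, and combining with the preceding inequality delivers
\[
\int_\gamma\pi\,ds\ge\frac{1}{K_0+1}\int_{\gamma'}\pi\,ds\ge\frac{1}{K_0^6(K_0+1)}\,\pi(z_{v,w}),
\]
which is the asserted bound. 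The only substantive obstacle is the detour cost estimate, and it rests on the simple but crucial observation that the auxiliary vertex $u$ is simultaneously a $G[n_0]$-neighbor of both endpoints of the replaced edge, so that Condition~(H2), combined with $\min\le\text{average}$, absorbs $\pi(u)$ into a constant multiple of $\int_e\pi\,ds$.
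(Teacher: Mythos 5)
Your proposal is correct and coincides with the paper's proof: both replace each $Z[n_0]$-absent vertical edge $v_i\sim v_{i+1}$ with the detour $v_i\sim_Z g(v_{i+1})_{n_{v_i}}\sim_Z v_{i+1}$, bound the detour cost by $(K_0+1)\int_e\pi\,ds$ via (H2), and then invoke Corollary~\ref{cor:h3-on-z}. Your write-up is in fact slightly more explicit than the paper's in spelling out the inequality $\pi(u)\le K_0\min\{\pi(v_i),\pi(v_{i+1})\}\le K_0\int_e\pi\,ds$ that underlies the detour estimate.
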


\begin{proof}
	Let $\gamma$ be a path in $G[n_0]$ with end points $v,w$. 
	We will construct a path $\gamma'$ in $Z[n_0]$ with end points $v,w$ as follows so that 
	$\int_{\gamma'}\pi\, ds\le (K_0+1)\, \int_\gamma\pi\, ds$, and then an application of Corollary~\ref{cor:h3-on-z} yields the desired result.
	
	Let $e=v_1\sim v_2$ be an edge in $\gamma$. If $e$ is a horizontal edge, then it is
	also a horizontal edge in $Z[n_0]$, and we use that in $\gamma'$. If it is a vertical edge that also is in $Z[n_0]$, then again
	we keep that in $\gamma'$. If $e$ is a vertical edge that is not in $Z[n_0]$, then wihtout loss of generality we will consider the
	case that $n_{v_2}=n_{v_1}+1$. Then $g(v_2)_{n_{v_1}}\sim_Z v_1$, and we replace the edge $e$ in $\gamma$ by the short path
	$\beta(e)=v_1\sim_Z g(v_2)_{n_{v_1}}\sim_Z v_2$ in constructing $\gamma'$. Observe that
	\begin{align*}
	\int_{\beta(e)}\, \pi\, ds=\frac{\pi(v_1)+\pi(g(v_2)_{n_{v_1}})+\pi(g(v_2)_{n_{v_1}})+\pi(v_2)}{2}
	   &\le (K_0+1)\, \frac{\pi(v_1)+\pi(v_2)}{2}\\
	   &= (K_0+1)\, \int_e\pi\, ds.
	\end{align*}
	Doing these replacements one edge at a time in the path $\gamma$, we obtain $\gamma'$ as desired, with
	$\int_{\gamma'}\pi\, ds\le (K_0+1)\, \int_\gamma\pi\, ds$.
\end{proof}

\subsection{Conclusion}
Assuming $\Mod_p(X,d)=0$, we succeeded in finding a weight $\rho$ on the hyperbolic filling $G[n_0]$ of $X$ that,  
together with its associated $\pi$, satisfies conditions (H1) through (H4) of Theorem~\ref{thm:Car1.1}. 
Therefore, by Theorem~\ref{thm:Car1.1}, $d_\rho$  is in the Ahlfors regular quasisymmetric 
	gauge of $X$ and $(X,d_\rho)$ is Ahlfors $p$-regular. Thus, the Ahlfors regular conformal dimension 
	of $X$ is less than or equal to $p$.


\begin{thebibliography}{A}
\frenchspacing
\bibitem{BBS} A. Bj\"orn, J. Bj\"orn, N. Shanmugalingam:
\emph{Extension and trace results for doubling metric measure spaces and their hyperbolic fillings.}
 J. Math. Pures Appl. (9) {\bf 159} (2022), 196--249.
 \bibitem{BHK} M. Bonk, J. Heinonen, P. Koskela:
 \emph{Uniformizing Gromov hyperbolic spaces.}
  Ast\'erisque {\bf 270} (2001), viii+99 pp. 
 \bibitem{BoSa} M. Bonk, E. Saksman:
 \emph{Sobolev spaces and hyperbolic fillings.} 
 J. Reine Angew. Math. {\bf 737} (2018), 161--187.
 \bibitem{BoSc} M. Bonk, O. Schramm:
 \emph{Embeddings of Gromov hyperbolic spaces.} Geom. Funct. Anal. {\bf 10} (2000), no. 2, 266--306.
 \bibitem{BK} M. Bourdon, B. Kleiner:
 \emph{Some applications of $\ell_p$-cohomology to boundaries of Gromov hyperbolic spaces.}
 	Groups Geom. Dyn. {\bf 9} (2015), no. 2, 435--478.
 \bibitem{BP} M. Bourdon, H. Pajot:
 \emph{Cohomologie $\ell_p$ et espaces de Besov.} 
J. Reine Angew. Math. {\bf 558} (2003), 85--108.
\bibitem{BuSch} S. Buyalo, V. Schroeder:
\emph{Elements of asymptotic geometry.}
EMS Monographs in Mathematics. European Mathematical Society (EMS), Z\"urich, 2007. 
\bibitem{Car} M. Carrasco Piaggio:
\emph{On the conformal gauge of a compact metric space.}
 Ann. Sci. \'Ec. Norm. Sup\'er. (4) {\bf 46} (2013), no. 3, 495--548.
 \bibitem{DS}G. David, S. Semmes: 
 \emph{Fractured fractals and broken dreams. Self-similar geometry through metric and measure.} 
 Oxford Lecture Series in Mathematics and its Applications {\bf 7}. The Clarendon Press, 
 Oxford University Press, New York, 1997. x+212 pp.
\bibitem{Gro} M. Gromov:
\emph{Hyperbolic groups.} 
Essays in group theory, 75--263,
Math. Sci. Res. Inst. Publ., 8, Springer, New York, 1987.
\bibitem{Hei} J. Heinonen:
\emph{Lecture notes on analysis in metric spaces.}
Springer Universitext, Springer Verlag New York (2001).
\bibitem{Kwa} J. Kwapisz:
\emph{Conformal dimension via p-resistance: Sierpiński carpet.}
Ann. Acad. Sci. Fenn. Math. {\bf 45} (2020), no. 1, 3--51.
\bibitem{Mur} M. Murugan:
\emph{Conformal Assouad dimension as the critical exponent for combinatorial modulus.}
Ann. Fenn. Math. {\bf 48} (2023), no. 2, 453--491.
\bibitem{Sh} N. Shanmugalingam:
\emph{On Carrasco Piaggio's theorem characterizing quasisymmetric maps from compact doubling spaces to Ahlfors regular spaces.}
Potentials and partial differential equations--the legacy of David R. Adams, Adv. Anal. Geom., {\bf 8} 23--48, 
De Gruyter, Berlin, (2023).
\end{thebibliography}
\end{document}